\newtheorem{theorem}{Theorem}
\newtheorem{definition}{Definition}
\newtheorem{proposition}{Proposition}
\newtheorem{lemma}{Lemma}
\newtheorem{corollary}{Corollary}
\newtheorem{exam}{Example}
\newenvironment{example}{\begin{exam}\rm}{\end{exam}}
\newtheorem{exams}{Examples}
\newtheorem{rmk}{Remark}
\newenvironment{remark}{\begin{rmk}\rm}{\end{rmk}}
\newtheorem{notat}{Notation}
\newenvironment{notation}{\begin{notat}\rm}{\end{notat}}
\title[Partial separatrices and local Brunella's alternative]{Partial separatrices and local Brunella's alternative}
\author{F. Cano \& M. Ravara-Vago}
\address{Felipe Cano, Dep. Algebra, An. Mat. y Geom. y Top. - UVa -
Valladolid, Spain.}\email{fcano@agt.uva.es}
\address{Marianna Ravara-Vago, IRMAR, Universit\'e de Rennes 1 - Rennes, France.}\email{ravaravago@gmail.com}
\date{November 2014}
\begin{document}

\begin{abstract}
Here we state a conjecture concerning a local version of Brunella's alternative: any codimension one foliation in $({\mathbb C}^3,0)$ without germ of invariant surface has a neighborhood of the origin formed by leaves containing a germ of analytic curve at the origin.  We prove the conjecture for the class of codimension one foliations whose reduction of singularities is obtained by blowing-up points and curves of equireduction and such that the final singularities are free of saddle-nodes. The concept of ``partial separatrix'' for a given reduction of singularities has a central role in our argumentations, as well as the quantitative control of the generic Camacho-Sad index in dimension three. The ``nodal components'' are the only possible obstructions to get such germs of analytic curves. We use the partial separatrices to push the leaves near a nodal component towards compact diacritical divisors, finding in this way the desired analytic curves.
\end{abstract}

\maketitle

\tableofcontents

\section{Introduction}
In this paper we improve the main statements in \cite{Can-R-S} concerning a local version of Brunella's alternative for germs of codimension one holomorphic foliations.

 We know that any {\em non dicritical} germ of codimension one foliation $\mathcal F$ in $({\mathbb C}^3,0)$  always has an invariant germ of analytic surface, as  proved in \cite{Can-C} (the result is also true in higher ambient dimension \cite{Can-M}). Following a local version of Brunella's alternative \cite{Cer-L-R} and a conjecture of D. Cerveau \cite{Cer3}  we ask whether any  germ  of codimension one foliation ${\mathcal F}$ over $({\mathbb C}^3,0)$ without invariant germ of surface satisfies the following property:
 \begin{quote}
 \em $(\star)$
 There is an open neighborhood $U$ of \/ $0\in {\mathbb C}^3$ such that any leaf of ${\mathcal F}\vert_ U$ contains a germ of analytic curve at the origin.
 \end{quote}

 In view of the main result in \cite{Can}, any germ of codimension one foliation $\mathcal F$ in $({\mathbb C}^3,0)$ admits a reduction of singularities
\begin{equation*}
\label{eq:pi}
\pi:(M,\pi^{-1}(0))\rightarrow ({\mathbb C}^3,0).
\end{equation*}
 Using the arguments in \cite{Can-C}, we see that if $\mathcal F$ is without germ of invariant surface then
there is a {\em compact dicritical component} $D$ in the exceptional divisor $E$ of $\pi$ (this means that $D$ is an irreducible surface contained in $\pi^{-1}(0)$ and transversal to the transformed foliation $\pi^*{\mathcal F}$).

We develop our study inside the class of germs of codimension one foliations of {\em Complex Hyperbolic} type, for short CH-foliations. We recall \cite{Can-R-S} that a germ of codimension one foliation $\mathcal F$ in $({\mathbb C}^n,0)$ is a CH-foliation if for any generically transversal map
$$
\phi: ({\mathbb C}^2,0)\rightarrow ({\mathbb C}^n,0)
$$
the transformed foliation $\phi^*{\mathcal F}$ has no saddle-nodes in its reduction of singularities, that is $\phi^*{\mathcal F}$ is a {\em generalized curve} in the sense of \cite{Cam-N-S}. If $n=3$, given a reduction of singularities $\pi$ of $\mathcal F$, we have a CH-foliation if and only if there are no saddle-nodes among the singularities of $\pi^*{\mathcal F}$ of dimensional type two. We borrow the terminology of D. Cerveau in \cite{Cer2}, where ``complex hyperbolic'' stands for simple singularities in dimension two that are not saddle-nodes.

Although in this paper we only consider a particular class of codimension one foliations, we believe that there are enough reasons to state the following conjecture:
\begin{quote}
\em ``Any germ $\mathcal F$ of CH-foliation on $({\mathbb C}^3,0)$ without germ of invariant analytic surface satisfies $(\star)$''.
\end{quote}

Our general strategy to prove the conjecture is to show that all the leaves ``go'' to a compact dicritical component after  reduction of singularities. In fact, if $L$ is a leaf of $\pi^*{\mathcal F}$ intersecting a compact dicritical component $D$ at a point $p$, we can find a germ of analytic curve $(\tilde \gamma, p)\subset L$ and the image $(\pi(\tilde\gamma),0)$ is the desired germ of analytic curve. As we have shown in \cite{Can-R-S}, the main obstruction to following this strategy is the existence of a certain type of {\em uninterrupted nodal components}. They are a three-dimensional version of the ``nodal separators'' introduced by Mattei and Mar\'{\i}n in \cite{Mat-M}; they have also been recently considered by Camacho and Rosas \cite{Cam-R} in the study of local minimal invariant sets in dimension two. Now, the natural procedure is to prove that any {uninterrupted nodal component} goes to a compact dicritical component, carrying the leaves with it, and thus it does not produce an obstruction to property $(\star)$. Indeed, it is necessary to assume that the foliation has no invariant germ of surface. We interpret this fact after reduction of singularities by observing that all the {\em partial separatrices} also go to a compact dicritical component.

The relationship between uninterrupted nodal components and partial separatrices is the main argument we use in this paper to obtain a proof of the conjecture for a particular  class of CH-foliations on $({\mathbb C}^3,0)$.

Let us explain what are the {\em uninterrupted nodal components} and  the  {\em partial separatrices} for a
given reduction of singularities $\pi$ of a CH-foliation
$\mathcal F$ of $({\mathbb C}^3,0)$. First of all,
we quickly recall the final situation after reduction of singularities \cite{Can,Can-C}.

The exceptional divisor $E$ of $\pi$ is a normal crossings divisor and
the singular locus $\mbox{\rm Sing}\pi^*{\mathcal F}$ is a finite union of irreducible nonsingular curves having normal crossings with $E$. Any point  $p\in \mbox{\rm Sing}\pi^*{\mathcal F}$ has {\em dimensional type} $\tau_p\in \{2,3\}$, which corresponds to the number of variables needed to locally describe the foliation. 

If $\tau_p=2$, there are local coordinates $(x,y,z)$ at $p$ such that $\pi^*{\mathcal F}$ is given by
\begin{equation}
\label{eq:ttype}
\frac{dy}{y}-(\lambda +\phi(x,y))\frac{dx}{x}=0,\;  \phi(0,0)=0, \lambda\in {\mathbb C}\setminus {\mathbb Q}_{\geq 0}
\end{equation}
and moreover $(x=0)\subset E_{\mbox{\rm\small inv}}\subset (xy=0)$, where $E_{\mbox{\rm\small inv}}$ is the union of the invariant irreducible components of $E$. Note that $xy=0$ are invariant surfaces for $\mathcal F$ and that  the singular locus
$
\mbox{\rm Sing}\pi^*{\mathcal F}
$
 is
$
(x=y=0)
$
locally at $p$. The {\em transversal type} of $\pi^*{\mathcal F}$ at $p$ is the germ of foliation ${\mathcal T}_p$ in $({\mathbb C}^2,0)$ given by Equation (\ref{eq:ttype}).

Let $\Gamma$ be the only irreducible curve of
$
\mbox{\rm Sing}\pi^*{\mathcal F}
$ passing through $p$. We know that ${\mathcal T}_p={\mathcal T}_q$ for any $q\in \Gamma$ with $\tau_q=2$. Thus ${\mathcal T}_p={\mathcal T}_\Gamma$ is the {\em transversal type} of $\Gamma$. We say that $\Gamma$ is {\em  nodal} if $\lambda\in {\mathbb R}_{>0}$; in this case the transversal type is linearizable of the form $d(y/x^\lambda)=0$. If $\lambda\in {\mathbb R}_{<0}$, we say that $\Gamma$ is {\em a real saddle} and if $\lambda\in {\mathbb C}\setminus{\mathbb R}$ we say that $\Gamma$ is {\em a complex saddle}.

At a point $q$ of dimensional type three, the foliation $\pi^*{\mathcal F}$ is locally given by
$$
\frac{dx}{x}+(\lambda+\phi(x,y,z))\frac{dy}{y}+(\mu+\psi(x,y,z))\frac{dz}{z}=0
$$
where $\phi(0,0,0)=\psi(0,0,0)=0$ and
$
\lambda,\mu\in {\mathbb C}\setminus {\mathbb Q}_{\leq 0},\; \mu/\lambda \in {\mathbb C}\setminus {\mathbb Q}_{\leq 0}
$.
Moreover
$$
(xy=0)\subset E_{\mbox{\small\rm inv}}\subset (xyz=0).
$$
Note that the coordinate planes $xyz=0$ are invariant surfaces and
$$
\mbox{\rm Sing}\pi^*{\mathcal F}= (x=y=0)\cup (x=z=0)\cup (y=z=0).
$$
Thus there are exactly three curves $\Gamma_1,\Gamma_2,\Gamma_3$ of $\mbox{\rm Sing}\pi^*{\mathcal F}$ arriving at $q$. Up to reordering,  we have the following five possibilities:
\begin{enumerate}
\item $\Gamma_1,\Gamma_2$ are nodal curves and $\Gamma_3$ is  a real saddle.
\item $\Gamma_1$ is a nodal curve and $\Gamma_2,\Gamma_3$ are   complex saddles.
\item $\Gamma_1,\Gamma_2$ and $\Gamma_3$ are  real saddles.
\item $\Gamma_1$ is a real saddle and $\Gamma_2,\Gamma_3$ are complex saddles.
\item $\Gamma_1,\Gamma_2$ and $\Gamma_3$ are complex saddles.
\end{enumerate}
We define an {\em uninterrupted nodal component} ${\mathcal N}\subset \mbox{\rm Sing}\pi^*{\mathcal F}$ as any connected union of nodal curves such that at each point $q$ of dimensional type three there are exactly two curves $\Gamma_1,\Gamma_2\subset {\mathcal N}$ through $q$ (we have the first case in the  list above).  We say that  $\mathcal N$ is {\em incomplete} if it intersects the compact dicritical part of the exceptional divisor. As we have seen in  \cite{Can-R-S}, if  $\mathcal N$ is incomplete the leaves ``supported'' by $\mathcal N$ contain a germ of analytic curve. We  have also obtained the following result:
\begin{proposition}[\cite{Can-R-S}]
 \label{pro:starforincompletenodal}
 Consider a CH-foliation $\mathcal F$ on $({\mathbb C}^3,0)$ without germ of ana\-lytic surface and let $\pi$ be a reduction of singularities of $\mathcal F$. If any uninterrupted nodal component $\mathcal N$ is incomplete, then $\mathcal F$ satisfies $(\star)$.
\end{proposition}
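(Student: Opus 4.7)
The plan is to prove that in a sufficiently small tubular neighborhood $V$ of the exceptional divisor $E = \pi^{-1}(0)$, every leaf of $\pi^*\mathcal F\vert_V$ meets a compact dicritical component $D \subset E$. Once this is established, at any intersection point $p \in L \cap D$ the leaf $L$ contains a small holomorphic disc $\tilde\gamma$ transverse to $D$, and $\pi(\tilde\gamma)$ is a germ of analytic curve at $0$ contained in the corresponding leaf of $\mathcal F$, giving~$(\star)$.

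I would first exploit the hypothesis that $\mathcal F$ has no invariant germ of surface to deduce, following the arguments of \cite{Can-C}, not only the existence of at least one compact dicritical component of $E$, but the much stronger structural fact that \emph{every partial separatrix} of $\pi^*\mathcal F$ reaches a compact dicritical component. Then I would stratify $E$ by the local type of $\pi^*\mathcal F$ (regular points; dimensional-type-$2$ curves of nodal, real-saddle or complex-saddle type; dimensional-type-$3$ points of the five listed kinds) and, using Equation~(\ref{eq:ttype}) together with its three-dimensional analogue and the CH assumption, describe the holonomy of $\pi^*\mathcal F$ along each stratum.

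For a non-nodal dimensional-type-$2$ curve $\Gamma$ one has $\lambda \in (\mathbb{R}_{<0}) \cup (\mathbb{C} \setminus \mathbb{R})$, so the transversal holonomy of $\pi^*\mathcal F$ around $\Gamma$ is of hyperbolic character and any nearby leaf must escape a neighborhood of $\Gamma$ along one of the two invariant components of $E$ through $\Gamma$. Following these invariant components, i.e.\ partial separatrices, and iterating the same analysis at subsequent dimensional-type-$2$ and dimensional-type-$3$ strata, the leaf is propagated until it meets a compact dicritical component, since all partial separatrices do so. For a nodal curve the transversal type is linearizable as $d(y/x^\lambda) = 0$ with $\lambda \in \mathbb{R}_{>0}$, leaves look locally like $y/x^\lambda = c$, and do not spontaneously escape a neighborhood of $\Gamma$; this is the obstruction. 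The incompleteness hypothesis, however, asserts that every uninterrupted nodal component $\mathcal N$ meets a compact dicritical component, and at each internal dimensional-type-$3$ point of $\mathcal N$ (case~(1) of the classification) the two nodal curves meeting there share a common nodal first integral. This allows the leaves supported by $\mathcal N$ to be analytically continued all the way along $\mathcal N$ until the endpoint where $\mathcal N$ enters a compact dicritical component.

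The hard step will be this last one: one has to verify that the local nodal first integrals match consistently across the dimensional-type-$3$ gluing points to yield a genuine global continuation of a leaf along the whole of $\mathcal N$, and that this continuation remains inside the fixed tubular neighborhood $V$ until it reaches $D$. Once this ``sliding along $\mathcal N$'' is under control, combining it with the propagation argument for non-nodal strata covers a full neighborhood of $E$ and establishes~$(\star)$.
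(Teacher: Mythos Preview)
The paper does not supply its own proof of Proposition~\ref{pro:starforincompletenodal}; it is quoted from \cite{Can-R-S}, and only the underlying strategy is described in the introduction. Your plan faithfully reproduces that strategy: show that every leaf in a tubular neighborhood of $E$ reaches a compact dicritical component, identify the uninterrupted nodal components as the only possible obstruction (the three-dimensional analogue of Mattei--Mar\'{\i}n nodal separators), and use the incompleteness hypothesis to slide leaves supported by such a component into a compact dicritical divisor. You also correctly flag the genuinely delicate step, namely the coherent gluing of the local nodal first integrals across the dimensional-type-three points of $\mathcal N$.

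Two points deserve correction. First, your identification ``invariant components, i.e.\ partial separatrices'' is wrong in the terminology of this paper: a \emph{partial separatrix} is a connected component of the union of \emph{trace curves} in $\mathrm{Sing}(\pi^*\mathcal F)$, not an invariant component of $E$. The incompleteness of partial separatrices (Proposition~\ref{pro:incompletitudpartialsep}) is used here to prove Theorem~\ref{teo:nodalcomponents}, not directly to propagate leaves across non-nodal strata; the dynamical escape argument you sketch for saddles is a separate ingredient. Second, for a \emph{trace} curve $\Gamma$ only one of the two local invariant surfaces lies in $E$; the other is the separatrix surface $S_\Gamma\not\subset E$, so a leaf escaping along that side leaves $E$ rather than following another component of the divisor. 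Your propagation argument therefore needs to distinguish trace from corner curves and to track the connectedness of $E_{\mathrm{inv}}$ more carefully.
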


Thus, the conjecture is proved once we assure that there is a reduction of singularities such that any uninterrupted nodal component is incomplete.

Let us now introduce the concept of {\em partial separatrix}. We say that a curve $\Gamma\subset\mbox{\rm Sing}{\pi^*{\mathcal F}}$ is a {\em trace curve} if it is contained in only one invariant irreducible component of the exceptional divisor $E$. Otherwise, the curve is the intersection of two invariant irreducible components of $E$ and it is a {\em corner curve}. By definition, a {\em partial separatrix} $C$ is any connected component of the union of trace curves. We say that  $C$ is {\em complete} if it does not intersect the compact dicritical part of $E$, otherwise, we say it is {\em incomplete}.

 Following Cano-Cerveau's argumentations as in \cite{Can-C}, given a partial separatrix $C$ we find a germ of invariant surface $$(S,C\cap \pi^{-1}(0))\subset (M,\pi^{-1}(0))$$ supported by $C$. The inclusion above is closed if and only if $C$ is complete. In this case we find by direct image a germ of surface $(\pi(S),0)$  invariant for $\mathcal F$. Hence, we conclude:
 \begin{quote}
 \em
 If $\mathcal F$ has no invariant germ of analytic surface, all the partial se\-paratrices are incomplete.
 \end{quote}
The incomplete partial separatrices are the ``guides'' we use to take the uninterrupted nodal components to a compact dicritical component of the exceptional divisor. To do this, we need an accurate control of the transitions of the Camacho-Sad indices along the curves in the singular locus from one component of the exceptional divisor to another. This quantitative analysis focused on the partial separatrices is in contrast with the qualitative and combinatorial arguments we used in \cite{Can-R-S} to obtain the first results concerning the conjecture.

 In this paper we prove the conjecture for the case of {\em special relatively isolated complex hyperbolic} germs  $\mathcal F$ of codimension one foliations in $({\mathbb C}^3,0)$. We precise the definitions in the next sections, but roughly speaking, this means that we can perform a reduction of singularities by blowing-up points until we reach a situation of equireduction along non compact curves, which we resolve by blowing-up only curves. This class of foliations contains both the cases of equireduction and the foliations associated to absolutely isolated singularities of surfaces. There are previous works on absolutely isolated singularities of vector fields \cite{Cam-C-S} or on foliations desingularized by punctual blow-ups \cite{Can-C-S};  also, the results of Sancho de Salas in \cite{San} concern these conditions very closely.

The main result of this paper is:
\begin{theorem}
 \label{teo:mainteo}
 Any special relatively isolated CH-foliation $\mathcal F$ in $({\mathbb C}^3,0)$ without germ of invariant ana\-lytic surface satisfies property $(\star)$.
 \end{theorem}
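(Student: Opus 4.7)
The plan is to reduce Theorem~\ref{teo:mainteo} to Proposition~\ref{pro:starforincompletenodal}: it will be enough to exhibit a reduction of singularities $\pi$ of $\mathcal F$ such that every uninterrupted nodal component $\mathcal N\subset \mbox{\rm Sing}\,\pi^*\mathcal F$ meets the compact dicritical part of the exceptional divisor $E$. I would take $\pi$ to be the reduction distinguishing the special relatively isolated class: a sequence of point blow-ups until every remaining non-reduced center is a curve of equireduction, followed by a sequence of blow-ups along such curves. This structured tower gives sharp combinatorial control over $E$ and over the curves of $\mbox{\rm Sing}\,\pi^*\mathcal F$; the CH-hypothesis guarantees that no saddle-nodes appear, so at every singular point the transversal types are linearizable with eigenvalues in $\mathbb C\setminus \mathbb Q_{\geq 0}$.

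Next I would exploit the absence of invariant germs of surface. As recalled in the introduction, every partial separatrix $C$ supports an invariant germ $(S,C\cap \pi^{-1}(0))$, and $C$ being complete would produce a closed invariant germ and hence the forbidden surface $(\pi(S),0)\subset(\mathbb C^3,0)$. Therefore every partial separatrix is incomplete, i.e., meets the compact dicritical part of $E$. The technical core of the argument would then be to show that any uninterrupted nodal component $\mathcal N$ is ``carried'' by such an incomplete partial separatrix. I would proceed locally: at a dimensional-type-two point of $\mathcal N$ the transversal type is $d(y/x^\lambda)=0$ with $\lambda\in\mathbb R_{>0}$, and at a dimensional-type-three point of $\mathcal N$ we are in case~(1) of the list, with two nodal curves and one real-saddle curve $\Gamma_3$. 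Applying the Camacho-Sad index formulas on each compact component of $E$ incident to $\mathcal N$, the strictly positive residues forced by the nodal $\lambda$'s, combined with the global balance on that component, are incompatible with $\mathcal N$ lying entirely in the compact non-dicritical part: the sign obstruction must be absorbed either by an incomplete partial separatrix already contained in $\mathcal N$, or by following the real-saddle branches $\Gamma_3$ out of the dimensional-type-three points of $\mathcal N$ until a trace curve is reached. In either case $\mathcal N$ is connected inside $\mbox{\rm Sing}\,\pi^*\mathcal F$ to an incomplete partial separatrix and hence meets a compact dicritical component, as required.

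The hard part will be precisely this quantitative propagation of Camacho-Sad indices along the tower of blow-ups, keeping track of corner transitions, of the changes of the ambient invariant surface, and of the possibility of closed nodal cycles that could evade the dicritical part. This is where the special relatively isolated hypothesis is indispensable: blowing up only points and curves of equireduction keeps the incidence combinatorics of $E$ and of the singular locus under tight control, so that the chain of index inequalities used to transport $\mathcal N$ towards a partial separatrix is finite and terminates in a compact dicritical component. Once this has been established, Proposition~\ref{pro:starforincompletenodal} concludes that $\mathcal F$ satisfies $(\star)$.
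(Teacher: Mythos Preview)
Your overall plan---reduce to Proposition~\ref{pro:starforincompletenodal} by proving that every uninterrupted nodal component $\mathcal N$ is incomplete, and use the incompleteness of all partial separatrices as the engine---is exactly the paper's strategy. The gap is in the ``technical core'': your proposed mechanism for forcing $\mathcal N$ to touch a compact dicritical component does not work as stated, and the paper's actual mechanism is substantially different from what you sketch.

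First, a logical slip. You argue that $\mathcal N$ is ``connected inside $\mbox{\rm Sing}\,\pi^*\mathcal F$ to an incomplete partial separatrix and hence meets a compact dicritical component''. This does not follow: an incomplete partial separatrix $C$ meets $E_{\mbox{\sl\small c,dic}}$ somewhere, but that point need not lie on $\mathcal N$. Being adjacent to $C$, or reachable from $C$ through real-saddle branches, says nothing about $\mathcal N\cap E_{\mbox{\sl\small c,dic}}$. Likewise, a global Camacho-Sad balance on a single compact invariant component $E_i$ does not by itself rule out $\mathcal N\subset E_{\mbox{\sl\small inv}}$: the positive contributions from nodal curves can be absorbed by real-saddle curves in $E_i$ that have nothing to do with $\mathcal N$, so no ``sign obstruction'' forces $\mathcal N$ into the dicritical part.

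What the paper actually does is argue \emph{inductively through the tower} rather than at the final step. The key notion you are missing is that of an \emph{incomplete point at an intermediate level} $k$: a point $p\in F_k$ over which some future dicritical component will appear and will meet the projection $C_k$ of a partial separatrix. One locates the birth level $b$ of the compact part of $\mathcal N$ and shows that at that level an incomplete point sits on a curve of $\mathcal N_b$ in one of three precise configurations (a trace curve of $\mathcal N_b$ through an incomplete point; a corner nodal curve through an incomplete point; or an invariant component in which every singular curve through the incomplete point is nodal or real-saddle). A persistency proposition then shows that blowing up such a point reproduces one of the same three configurations at level $k+1$. Since at level $N$ every point is complete, this yields the contradiction. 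The Camacho-Sad transition formulas you allude to are indeed the workhorse, but they are used to control these transitions between intermediate levels (Propositions~\ref{pro:indicesymultiplicidad}, \ref{pro:notallrealsaddles}, \ref{pro:tripetransition}, \ref{pro:tracetransitions}), not to run a single global balance on $E$ at the end.
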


Theorem \ref{teo:mainteo} improves the results in \cite{Can-R-S}. What we know from \cite{Can-R-S} is that if we take a complete nodal component $\mathcal N$,  then the projection of $\mathcal N$ contains at least one of the germs of curve of $\mbox{\rm Sing}{\mathcal F}$ in  $({\mathbb C}^3,0)$. In this way we have a criterion for the non existence of complete uninterrupted nodal components by looking at generic points of the germs of curve in $\mbox{\rm Sing}{\mathcal F}$. 

We prove Theorem \ref{teo:mainteo} by showing that all the uninterrupted nodal components are incomplete. The argument is based on a control of the evolution of {\em incomplete points}. They are points such that there is a ``local'' partial separatrix over them which is incomplete. At the final step of the reduction of singularities, all the points are complete. We find a contradiction with the existence of a complete uninterrupted nodal component ${\mathcal N}$ as follows. At the ``birth level'' of ${\mathcal N}$ in the sequence of reduction of singularities, we find an incomplete point in a particular situation concerning the partial separatrices through it. We prove that this situation is part of a class of scenarios which persists along the reduction of singularities. In each scenario, there is at least one incomplete point. Then ``a fortiori'' we find an incomplete point at the last step of the reduction of singularities and obtain the desired contradiction.\\ 

\noindent{\em Acknowledgements}. The second author is very grateful to the group ECSING and the Department of Mathematics at Universidad de Valladolid for the warm welcome on the numerous visits.\\   

\noindent {\em Funding}. This work was partially supported by the Spanish research project MTM2010-15471 and the Brazilian PDE/CsF scholarship 245480/2012-9 from CNPq.

\section{Special Relatively Isolated CH-foliations}

A {\em special relatively isolated sequence ${\mathcal S}=\{\pi_k\}_{k=1}^N$ of blow-ups of $({\mathbb C}^3,0)$} is a sequence
$$
{\mathcal S}: ({\mathbb C}^3,0)=(M_0,F_0)\stackrel{\pi_1}{\leftarrow} (M_1,F_1) \stackrel{\pi_2}{\leftarrow}\cdots \stackrel{\pi_N}{\leftarrow} (M_N,F_N),\quad F_{k+1}=\pi_{k+1}^{-1}(F_k),
$$
given by blow-ups $\pi_{k+1}$ with center at closed germs
$(Y_{k},Y_{k}\cap F_k)\subset (M_{k},F_k)$,
such that for any $0\leq k\leq N-1$ we have
\begin{enumerate}
\item   $Y_{k}\cap F_k$ is a single point $Y_{k}\cap F_k=\{p_k\}$.
\item   $Y_{k}$ is either $\{p_k\}$ or a germ of nonsingular closed curve $(Y_k,p_k)\subset (M_k,F_k)$ having normal crossings with the exceptional divisor $E^{k}\subset M_{k}$ of
    $$
    \sigma_k:(M_k,F_k)\rightarrow ({\mathbb C}^3,0),
    $$
    where
    $\sigma_k=\pi_1\circ\pi_2\circ\cdots \circ\pi_k$.
    \item If $(Y_k,p_k)$ is a germ of curve, we have an {\em equireduction sequence over  $(Y_k,p_k)$} in the following sense. For any $k<\ell\leq N-1$ we have one of the following situations:
        \begin{enumerate}
        \item $p_k\notin \pi_{k,\ell}(Y_{\ell})$, where $ \pi_{k,\ell}=\pi_{k+1}\circ\pi_{k+2}\circ\cdots\circ \pi_\ell$.
        \item The center $(Y_\ell,p_\ell)$ is a germ of curve and $\pi_{k,\ell}$ induces an isomorphism $$\bar\pi_{k,\ell}:(Y_\ell,p_\ell)\rightarrow (Y_k,p_k). $$
        \end{enumerate}
\end{enumerate}
Now, we say that a CH-foliation $\mathcal F$ over $({\mathbb C}^{3},0)$ is a {\em special relatively isolated CH-foliation} if there is a sequence $\mathcal S$ of blow-ups as above  such that
\begin{enumerate}
\item[i)] For any $0\leq k\leq N-1$ the center $Y_{k}\subset M_{k}$ of the blow-up $\pi_{k+1}$ is contained in  the locus $\mbox{\rm Sing}^*({\mathcal F}_{k},E^{k})$ of non simple points for ${\mathcal F}_k, E^k$, where ${\mathcal F}_{k}=\sigma_k^*{\mathcal F}$ is the transform of $\mathcal F$  by $\sigma_k$.
\item[ii)] All the points in $F_N$ are simple points for ${\mathcal F}_N, E^N$. (See \cite{Can})
\end{enumerate}
Since $\mathcal F$ is a CH-foliation, the simple points after reduction of singularities are without saddle-nodes. Conversely, the fact that there is a reduction of singularities without saddle-nodes in the last step is enough to assure that $\mathcal F$ is a CH-foliation.
We refer to \cite{Can-R-S} for more details on the definitions.

Now, let us introduce some useful notations and remarks. The exceptional divisor $E^k\subset M_k$ of $\sigma_k$ is a union of components
$$
E^k=E^k_1\cup E^k_2\cup\cdots\cup E^k_k
$$
where $E^k_\ell$ is the stric transform of $E^{k-1}_\ell$ for $1\leq\ell<k$ and $E^k_k=\pi_{k}^{-1}(Y_k)$ is the exceptional divisor of the last blow-up $\pi_k$. Recall that $M_k$ is a germ over the fiber $F_k=\sigma_k^{-1}(0)$.
For any $k\geq 1$ we have that $F_k\subset E^k$ and the exceptional divisor $E^k$ is a closed germ
$$
(E^k,F_k)\subset (M_k,F_k).
$$
A given irreducible component $E^k_i$ may be an {\em invariant component}, if it is invariant for ${\mathcal F}_k$, or a {\em dicritical component}, when it is generically transversal to the foliation. We denote $E_{\mbox{\sl\small inv}}^k$ the union of the invariant components and   $E_{\mbox{\sl\small dic}}^k$ the union of the dicritical ones.

Recall the definition of $\pi_{k,\ell}=\pi_{k+1}\circ\pi_{k+2}\circ\cdots\circ \pi_\ell$. For certain special cases, we adopt the following simplified notations:
\begin{eqnarray*}
\sigma_k=\pi_{0,k}&:&(M_k,F_k)\rightarrow ({\mathbb C}^3,0)\\
\rho_k=\pi_{k,N}&:&(M,F)\rightarrow (M_k,F_k)\\
\pi=\pi_{0,N}&:& (M,F)\rightarrow ({\mathbb C}^3,0).
\end{eqnarray*}
We denote the final step of the reduction of singularities by $$M=M_N,\; E=E^N,\; \pi^*{\mathcal F}={\mathcal F}_N,\; F=F_N=\pi^{-1}(0).$$

From now on, we fix a special relatively isolated CH-foliation $\mathcal F$ of $({\mathbb C}^3,0)$ without germ of invariant analytic surface and a special relatively isolated sequence of blow-ups $\mathcal S$ performing a reduction of singularities of $\mathcal F$.

If the first blow-up $\pi_1$ is centered at the origin $0\in {\mathbb C}^3$, we have that the fiber $F_k$ is the union $E^k_{\mbox{\sl\small c}}$ of the compact components of $E^k$, for any $1\leq k\leq N$. As we shall explain later, the case when the first blow-up is centered in a germ of curve is of no interest to us, since in this situation the foliation $\mathcal F$ has invariant surfaces.
Thus, we  also suppose along the paper that $\pi_1$ is a blow-up centered at the origin and hence $F_k=E^k_{\mbox{\sl\small c}}$.

\section{Partial separatrices}
\label{sec:partialseparatrices}
Here we do a revision, adapted to our case,  of the partial separatrices introduced in \cite{Can-R-S} and we briefly recall the global picture of a reduction of singularities (see \cite{Can} for more details).

\begin{definition} A {\em partial separatrix $C$ for ${\mathcal F}, {\pi}$} is any connected component  of the union $T$ of the trace curves of $\mbox{\rm Sing}(\pi^*{\mathcal F})$. We say that $C$ is {\em complete} if it does not intersect the union $E_{\mbox{\sl\small c,dic}}$ of the compact dicritical components of $E$.
We say that $C$ is {\em incomplete} if it does intersect $E_{\mbox{\sl\small c,dic}}$.
\end{definition}
A partial separatrix $C$ must be considered as a connected component of the germ $(T,T\cap F)$. So it is also a germ $(C,C\cap F)$. For shortness, we write $C$ to denote the partial separatrix if there is no risk of confusion. The {\em compact part} $C\cap F$ of a partial separatrix  is also connected and it is the union of the compact curves in $C$ or just a single point.
\begin{example}
Let us recall Darboux-Jouanolou's example \cite{Jou}.  It is the conic foliation in $({\mathbb C}^3,0)$ given by the 1-form
$$
\omega= (z^{m+1}-x^my)dx+(x^{m+1}-y^{m}z)dy+(y^{m+1}-z^m x)dz.
$$
The reduction of singularities consists of an initial dicritical blow-up of the origin followed by $m^2+m+1$ blow-ups centered at each of the lines of the singular locus
$$
z^{m+1}-x^my= x^{m+1}-y^{m}z= y^{m+1}-z^m x=0.
$$
We find $2(m^2+m+1)$ partial separatrices, all of them  incomplete. Each one is a single non compact curve $(C^{(i)},p_i)$, $i=1,2,\ldots, 2(m^2+m+1)$  and hence the compact part $C^{(i)}\cap F$ is just the point $p_i$.
\end{example}

Let $C$ be a partial separatrix and take a point $p\in C\cap F$. Recalling that the final singularities are complex hyperbolic, depending on the dimensional type $\tau=\tau(\pi^*{\mathcal F};p)$ we find two situations:
\begin{enumerate}
\item If $\tau=2$, there are coordinates $x,y,z$ at $p$ such that $E_{\mbox{\sl\small inv}}=(x=0)$, $E_{\mbox{\sl\small dic}}\subset (z=0)$ and
\begin{eqnarray*}
  C=(x=y=0)=
  \mbox{\rm Sing}(\pi^{*}{\mathcal F}).
 \end{eqnarray*}
Moreover $S=(y=0)$ is the only invariant germ of surface for $\pi^*{\mathcal F}$ at $p$ not contained in $E$.
 \item If $\tau=3$, there are coordinates $x,y,z$ at $p$ such that $E=E_{\mbox{\sl\small inv}}=(xy=0)$,
 \begin{eqnarray*}
  C=(x=z=0)\cup (y=z=0),
 \end{eqnarray*}
and $\mbox{\rm Sing}(\pi^{*}{\mathcal F})=C\cup (x=y=0)$.
 Moreover  $S=(z=0)$ is the only invariant germ of surface for $\pi^*{\mathcal F}$ at $p$ not contained in $E$.
\end{enumerate}
Gluing these situations along $C$ as in \cite{Can-C}, we find a germ of surface $(S_C,C\cap F)$ invariant for $\pi^*{\mathcal F}$ and not contained in $E$. Moreover, the inclusion of germs
$$
(S_C,C\cap F)\subset (M,F)
$$
is a closed immersion if and only if $S_C\cap F=C\cap F$. On the other hand, we have
$$
S_C\cap F=C\cap F \Leftrightarrow C\cap E_{\mbox{\sl\small c,dic}}=\emptyset.
$$
That is, we obtain a closed immersion exactly when $C$ is a complete partial separatrix. In this case, by Grauert's Theorem of the direct image under a proper morphism, we obtain a germ of surface $(\pi(S_C),0)$ invariant for $\mathcal F$. We conclude:
\begin{proposition}
\label{pro:incompletitudpartialsep}
If $\mathcal F$ has no invariant germ of surface, then all the partial separatrices are incomplete.
\end{proposition}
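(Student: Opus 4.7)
My plan is to prove the contrapositive: if there exists a complete partial separatrix $C$, then I will construct a germ of invariant analytic surface through the origin for $\mathcal F$. The whole strategy is to glue together the local invariant surfaces furnished by the normal forms at each point of $C\cap F$, check that the glued surface has compact vertical fiber over the origin exactly when $C$ is complete, and then push it down by $\pi$ using Grauert's direct image theorem.

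First I would proceed locally. At every point $p\in C\cap F$ the dimensional type is either $2$ or $3$, and in both of the coordinate descriptions recalled just before the proposition, there is a distinguished smooth invariant germ of surface $(S_p,p)$ not contained in $E$: namely $(y=0)$ in the $\tau=2$ case and $(z=0)$ in the $\tau=3$ case. In both cases $S_p\cap E$ locally coincides with the union of the trace components of $\mathrm{Sing}(\pi^*\mathcal F)$ through $p$, that is, with the germ of $C$ at $p$. The key point is that the normal forms also give \emph{uniqueness} of this transverse invariant surface, so on overlaps the local pieces must coincide. Away from $C\cap F$ the curve $C$ has only non-singular, dimensional type $2$ transverse behavior, and we may extend $S_p$ along $C$ by following the (unique) non-exceptional invariant surface through the generic points of the trace components. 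This produces a well-defined germ of invariant surface $(S_C,C\cap F)\subset (M,F)$, smooth off its self-intersection along $C\cap F$.

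Next I would analyze when this germ defines a \emph{closed} immersion into $(M,F)$. Since $S_C$ is built so that $S_C\cap E=C$ locally, the compact part $S_C\cap F$ coincides with $C\cap F$ precisely when $C$ does not meet any compact dicritical component; indeed, if $C$ hit a compact dicritical component $D\subset E_{\mbox{\sl\small c,dic}}$ at a point $q$, then $S_C$ would spread along the leaves of $\pi^*\mathcal F$ crossing $D$ transversely and its closure would contain a positive-dimensional piece of $D$ outside of $C$. Hence the inclusion $(S_C,C\cap F)\subset (M,F)$ is closed iff $C$ is complete. When $C$ is complete, $\pi\vert_{S_C}\colon S_C\to\pi(S_C)$ is proper with compact fiber $C\cap F$ over the origin, so by Grauert's theorem on direct images under proper morphisms the image $(\pi(S_C),0)\subset (\mathbb C^3,0)$ is a germ of analytic surface, and invariance is preserved. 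This contradicts the hypothesis that $\mathcal F$ has no invariant germ of surface.

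The main obstacle I foresee is the gluing step, specifically confirming that the locally defined transverse invariant surfaces $(S_p,p)$ and $(S_{p'},p')$ agree on overlaps: this reduces to the uniqueness statement embedded in the normal forms (no saddle-node component, no resonant situation producing a second formal separatrix), which is exactly guaranteed by the CH hypothesis together with $\lambda\in\mathbb C\setminus\mathbb Q_{\geq 0}$ or $\lambda,\mu\in\mathbb C\setminus\mathbb Q_{\leq 0}$. Once this uniqueness is in hand, the gluing and the Grauert step are essentially formal, and everything reduces to the equivalence $S_C\cap F=C\cap F\Leftrightarrow C\cap E_{\mbox{\sl\small c,dic}}=\emptyset$ already flagged in the discussion preceding the proposition.
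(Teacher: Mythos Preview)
Your proposal is correct and follows essentially the same approach as the paper: the paper's argument (given in the paragraph immediately preceding the proposition) consists precisely of gluing the local invariant surfaces along $C$ \`a la Cano--Cerveau to obtain $(S_C,C\cap F)$, noting that the inclusion is closed iff $S_C\cap F=C\cap F$ iff $C\cap E_{\mbox{\sl\small c,dic}}=\emptyset$, and then applying Grauert's direct image theorem to project $S_C$ to an invariant germ of surface at the origin. Your write-up spells out the uniqueness needed for the gluing and the reason closedness fails at a compact dicritical component in more detail than the paper does, but the strategy is identical.
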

To finish this section, we give a result that  justifies our assumption on the first blow-up being centered at the origin.
 \begin{proposition} If the first blow-up is centered at a germ of curve $\gamma$, then $\mathcal F$ has a germ of invariant surface.
\end{proposition}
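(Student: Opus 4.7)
The plan is to exploit the fact that choosing a curve center $\gamma$ for the very first blow-up forces $\mathcal F$ to be essentially two-dimensional along $\gamma$, and then to combine the existence of separatrices for plane generalized curves with the standard Remmert--Stein extension across the origin.

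First, I would exploit the equireduction. By condition (i) of the definition of special relatively isolated CH-foliation, $\gamma=Y_0$ is contained in $\mathrm{Sing}\,\mathcal F$, and the equireduction condition (3) in the definition of $\mathcal S$ implies that every subsequent center meeting (the strict transform of) $\gamma$ is itself a curve projecting isomorphically onto $\gamma$. Consequently, at every point $q\in\gamma\setminus\{0\}$ the dimensional type of $\mathcal F$ is at most two, and one can find local coordinates $(u,v,t)$ centered at $q$, with $\gamma=\{u=v=0\}$, in which $\mathcal F$ is defined by a $1$-form
\begin{equation*}
\omega=A(u,v)\,du+B(u,v)\,dv
\end{equation*}
independent of $t$. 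Thus, locally, $\mathcal F\simeq \mathcal T_q\times(\gamma,q)$ for a germ of plane foliation $\mathcal T_q$ at $({\mathbb C}^2,0)$.

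Next I would argue that $\mathcal T_q$ has a separatrix. Restricting the sequence $\mathcal S$ (from $\pi_1$ on) to a transversal disk through $q$ produces a reduction of singularities of $\mathcal T_q$; because $\mathcal F$ is CH, the final simple singularities of $\mathcal T_q$ are free of saddle-nodes, so $\mathcal T_q$ is a plane generalized curve. By Camacho--Sad, every germ of plane foliation admits an invariant analytic curve $\sigma_q\subset({\mathbb C}^2,0)$ (and there are infinitely many if $\mathcal T_q$ is dicritical). The product description above is in fact the local form of an analytic family depending holomorphically on $q\in\gamma\setminus\{0\}$: the (reduced) union of all separatrices of $\mathcal T_q$ is intrinsic and varies holomorphically with $q$, and its total space is a closed $\mathcal F$-invariant analytic hypersurface $S^\ast$ in some punctured neighborhood $U\setminus\{0\}$ of $0\in{\mathbb C}^3$. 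Finally, by the Thullen--Remmert--Stein extension theorem for pure codimension-one analytic subsets, $S^\ast$ extends uniquely to a closed analytic hypersurface $S\subset U$, still invariant by continuity. The germ $(S,0)$ is the desired germ of invariant surface for $\mathcal F$.

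The point I expect to require the most care is the gluing: one has to ensure that the local separatrices of $\mathcal T_q$ organize into a single analytic hypersurface in a neighborhood of the punctured curve $\gamma\setminus\{0\}$, whose fundamental group is $\mathbb Z$. Working with the whole (finite, in the non-dicritical case) union of separatrices of $\mathcal T_q$ rather than a single chosen one sidesteps the possible monodromy around the puncture, since this union is intrinsic to $\mathcal T_q$ and is therefore preserved by any holonomy automorphism. The dicritical case is softer still, since one may take the total space of any single invariant curve field transversal to the exceptional divisor $E_1=\pi_1^{-1}(\gamma)$. Once such a global $S^\ast$ is produced on $U\setminus\{0\}$, the extension across the origin is a standard Hartogs-type application.
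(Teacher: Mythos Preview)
Your proposal has a genuine gap at its central step. You assert that equireduction over $\gamma$ forces $\mathcal F$ to be locally a product at every $q\in\gamma\setminus\{0\}$, i.e.\ that one can choose coordinates $(u,v,t)$ in which a defining $1$-form reads $A(u,v)\,du+B(u,v)\,dv$. Condition~(3) in the definition of $\mathcal S$ only says that the \emph{centers} of the subsequent blow-ups are curves projecting isomorphically onto $\gamma$; this is a combinatorial constraint on the resolution, not a statement about the local normal form of $\mathcal F$. After reduction one can certainly have simple singularities of dimensional type three sitting over $q$ (e.g.\ at a corner point $p\in E_i\cap E_j$ lying over $q$, with a third invariant surface $Z=0$ transverse to the fiber), and whenever that happens $\pi^*\mathcal F$ is not a product with the $\gamma$-direction near $p$, hence $\mathcal F$ itself is not a product at $q$. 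Your later hedge (``an analytic family depending holomorphically on $q$'') does not rescue the argument: if the foliation is not a product, the surface $S^\ast=\bigcup_q\sigma_q$ obtained by sweeping transversal separatrices has tangent plane at $p\in\sigma_q$ spanned by $T_p\sigma_q\subset\mathcal F_p$ and the direction of variation in $q$, and there is no reason for this second direction to lie in $\mathcal F_p$. So the crucial assertion that $S^\ast$ is $\mathcal F$-invariant is unjustified. The Remmert--Stein step would be fine once you have a closed invariant hypersurface on a punctured neighborhood, but you have not produced one.

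The paper proceeds quite differently: it passes to the full reduction $\pi\colon(M,F)\to(\mathbb C^3,0)$ and exploits that the fiber $F=\pi^{-1}(0)$ is a union of compact \emph{curves}. It then runs a short case analysis on $F$ (a non-invariant curve, a dicritical component with all fiber curves invariant, an invariant curve inside $\mathrm{Sing}\,\pi^*\mathcal F$, or none of these) and in each case builds a \emph{closed} germ of invariant surface in $(M,F)$ using the Cano--Cerveau prolongation argument, then pushes it down by the proper map $\pi$. Only in the last case does it invoke transversal separatrices at all, and there it appeals to the equireduction theory of \cite{Can2,Can-M} for the nontrivial fact that a Camacho--Sad separatrix of $\mathcal F\vert_\Delta$ actually induces an invariant surface of $\mathcal F$---precisely the step you claimed without proof.
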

\begin{proof} Note that we have equireduction along $\gamma$ and thus the fiber $F=\pi^{-1}(0)$ is a union of compact curves.
 We have the following possible cases:
\begin{enumerate}
\item $F$ contains a non invariant curve.
\item There is a dicritical component in the exceptional divisor, but all the curves in $F$ are invariant.
\item All the components of the exceptional divisor are invariant and there is a curve $\Gamma$ in $F$ contained in the singular locus of $\pi^*{\mathcal F}$.
\item All the components of the exceptional divisor are invariant and the curves in $F$ are not contained in the singular locus of $\pi^*{\mathcal F}$.
\end{enumerate}

If there is a non invariant curve $\Gamma\subset F$, then  $\Gamma$ is necessarily contained in a dicritical component of $E$. Taking a generic point $p\in \Gamma$, the foliation $\pi^*{\mathcal F}$ is non singular at $p$ and transversal to $\Gamma$. Thus, we find a germ of invariant surface $(\tilde S,p)$ that gives a closed immersion into $(M,F)$ and hence it projects onto
a germ of surface $(S,0)$ invariant for $\mathcal F$.

If all the  curves of $F$ are invariant but there is a dicritical component $E_i$ of $E$, we consider the curve $\Gamma=E_i\cap F$. At a generic point $p$ of $\Gamma$, there is a germ of surface $(\tilde S,p)$ not contained in $E$ such that $(\Gamma,p)=(\tilde S\cap E_i,p)$. By an extension of the argument of Cano-Cerveau \cite{Can-C} also used in   \cite{Reb-R} we can prolong $(\tilde S,p)$ over the fiber $F$  to find a closed immersion of a germ of invariant surface $(\tilde S,F)$ in $(M,F)$. Finally, we project it by $\pi$ to obtain a germ of surface $(S,0)$ invariant for $\mathcal F$. This argument is also valid for the case $(3)$.

Suppose now that all the irreducible components of the exceptional divisor $E$ are invariant and the curves in $F$ are not contained in the singular locus of $\pi^*{\mathcal F}$. This gives a non dicritical equireduction along $\gamma$ in the sense of \cite{Can2, Can-M}. In those papers it is proved that the reduction of singularities is given by the one of
 ${\mathcal F}\vert_{\Delta}$, where $\Delta$ is a plane transverse to $\gamma$. Then, any Camacho-Sad separatrix $\Sigma$ of ${\mathcal F}\vert_{\Delta}$ induces a germ of surface $(S,0)$ invariant for $\mathcal F$.
\end{proof}

\section{Partial separatrices at intermediate steps}
Let us give some remarks and definitions concerning the behavior of  partial separatrices  at an intermediate step $(M_k,F_k)$ of the sequence $\mathcal S$ of reduction of singularities, with $0\leq k\leq N$.
\begin{notation} If $C$ is a partial separatrix, we denote $C_k=\rho_{k}(C)$. Let us remark that $C_k\cap F_k$ is a connected nonempty compact set and $\rho_k(C\cap F)=C_k\cap F_k$.
\end{notation}
Consider an irreducible compact curve $\Gamma\subset M_k$  in the singular locus $\mbox{\rm Sing}{\mathcal F}_k$. We have that  $\Gamma\subset F_k$. By the properties of the sequence $\mathcal S$, only finitely many points of $\Gamma$ will be modified in the further blow-ups $\pi_{k+1},\pi_{k+2},\ldots,\pi_N$. Thus, there is a well defined strict transform $\Gamma'\subset M$ of $\Gamma$ under $\rho_k$ with $\Gamma'\subset \mbox{\rm Sing}\pi^*{\mathcal F}$. Moreover,  at a generic point $p\in \Gamma$ we have
a point $p'\in \Gamma'$ where $\rho_k$ induces an isomorphism
$$
(M,p')\rightarrow (M_k,p).
$$
In particular the pair ${\mathcal F}_k, E_k$ has a simple singularity at such points $p$.

We say that $\Gamma$ is a {\em trace curve}, respectively a {\em corner curve}, if and only if $\Gamma'$ is so.
If $\Gamma$ is a trace curve, there is exactly one partial separatrix $C$ such that $\Gamma'\subset C$. We say that $C$ is the {\em partial separatrix asociated to $\Gamma$} and we denote it by  $C_\Gamma$.

Let us note that $C=C_\Gamma$ if and only if $\Gamma\subset C_k$.

\begin{definition} Consider a point  $p\in F_k$ and a partial separatrix $C$ where $p\in C_k$. We say that $C$ is {\em complete at $p$} if
for any dicritical component $E_i$ of $E$
such that
$E_i\subset \rho_k^{-1}(p)$ we have $E_i\cap C = \emptyset$. Otherwise we say that $C$ is {\em incomplete at $p$}.
\end{definition}

\begin{remark}
\label{rk:sepparcialcompleta}
If $C$ is complete at $p$, we find a closed immersion
$$
(S_{C}, C\cap \rho_k^{-1}(p))\subset (M,\rho_k^{-1}(p) )
$$
where $(S_{C}, C\cap \rho_k^{-1}(p))$ is a finite union of germs of surface invariant for $\pi^*{\mathcal F}$. Taking the image by $\rho_k$, we obtain a finite union
$$
(\rho_k(S_C),p)\subset (M_k,p)
$$
of germs of surface at $p$ invariant for ${\mathcal F}_k$.
\end{remark}

\begin{remark} A partial separatrix $C$ is complete, as stated in the introduction, if and only if it is complete at the origin  $0\in {\mathbb C}^3$. On the other hand, any partial separatrix $C$ is complete at the points $p\in C\cap F$ in the final step of the reduction of singularities, even if $p$ belongs to a compact dicritical component $E_i$ of $E$.
\end{remark}

\begin{remark}
\label{rk:completitudequirreduction}
Let $p_k\in F_k$ be a point such that the center $Y_k$ of $\pi_{k+1}$ is a germ of curve with $p_k\in Y_k$. In view of the equireduction properties of the sequence  of reduction of singularities, we have that $\rho_k^{-1}(p_k)$ is a union of compact curves an hence it does not contain any component of $E$. Then any partial separatrix is complete at $p_k$.
\end{remark}

\begin{remark}
 \label{rk:completoestable}
 We have $C_k=\pi_{k+1}(C_{k+1})$. If the partial separatrix $C$ is complete at a point $p\in C_k\cap F_k$ then it is
 complete at all the points in $C_{k+1}\cap \pi_{k+1}^{-1}(p)$.
  Moreover, assume that $\pi_{k+1}$ satisfies one of the following conditions:
\begin{enumerate}
\item The center $Y_k$ of $\pi_{k+1}$ does not contain $p$.
\item The center $Y_k$ is a germ of curve.
\item The blow-up $\pi_k$ is non dicritical.
\end{enumerate}
Then the partial separatrix $C$ is complete at $p\in C_k\cap F_k$ if and only if it is complete at all the points $p'\in C_{k+1}\cap \pi_{k+1}^{-1}(p)$.
\end{remark}

\begin{proposition}
\label{pro:puntoscompletos}
Let $C$ be a partial separatrix complete at $p\in C_k\cap F_k$. We have:
\begin{enumerate}
\item[a)] If $\pi_{k+1}$ is centered at $p$, then $\pi_{k+1}$ is a non-dicritical blow-up.
\item[b)] If $p\in E^k_i$, where $E^k_i$ is compact invariant,  there is a compact trace curve $\Gamma\subset C_k\cap E^k_i$, with $p\in \Gamma$.
\item[c)] If $p\in E^k_j$, where $E^k_j$ is  compact dicritical, then   $C\cap E_j^N\ne \emptyset$.
\end{enumerate}
\end{proposition}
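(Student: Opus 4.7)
My approach relies on Remark \ref{rk:sepparcialcompleta} (completeness at $p$ yields a germ of invariant surface $(\rho_k(S_C),p)\subset(M_k,p)$ for $\mathcal F_k$ containing $\rho_k(C\cap\rho_k^{-1}(p))$) together with the stability of completeness from Remark \ref{rk:completoestable}. For (a) I argue by contradiction: assume $\pi_{k+1}$ is a dicritical blow-up at $p$, so that $D=\pi_{k+1}^{-1}(p)$ is a compact dicritical component of $E^{k+1}$ whose strict transform $E_j$ in $E$ is a compact dicritical component contained in $\rho_k^{-1}(p)$; completeness then demands $C\cap E_j=\emptyset$. Since $p\in C_k\cap F_k$ and trace curves sit in invariant components, there is a trace curve $\Gamma_0\subset C$ with $\rho_k(\Gamma_0)$ a curve through $p$ lying in an invariant component $E^k_l$. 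The strict transform $\rho_{k+1}(\Gamma_0)$ meets $D$ at a point $p'\in E^{k+1}_l\cap D$. Using the $\tau=2$ simple-point model at $p'$, with $E_{\mbox{\sl\small inv}}=(x=0)=E^{k+1}_l$ and $E_{\mbox{\sl\small dic}}\subset(z=0)=D$, the singular locus $(x=y=0)$ is an invariant curve that touches $D$ exactly at $p'$; combined with propagation of completeness across any later blow-ups above $p'$, this forces $\Gamma_0$ to contain a point on the corner $E_l\cap E_j\subset E_j$, contradicting completeness.

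For (b), I take the germ of invariant surface $(\rho_k(S_C),p)\subset(M_k,p)$ from Remark \ref{rk:sepparcialcompleta}. Since $p\in E^k_i$ (invariant compact) and $\rho_k(S_C)$ is not contained in $E^k$, the intersection $\rho_k(S_C)\cap E^k_i$ is a non empty germ of invariant curve $(\gamma,p)$. Inspecting the two local models for simple points with an invariant component through $p$, $\gamma$ coincides generically with a branch of $\mbox{\rm Sing}\mathcal F_k$ contained in $E^k_i$ and has normal crossings with $E^k$; by compactness of $E^k_i$, $\gamma$ extends to an irreducible compact curve $\Gamma\subset E^k_i$ through $p$. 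The strict transform of $\Gamma$ in $M$ lies in the invariant component $E_i$ and in $\mbox{\rm Sing}\pi^*\mathcal F$, making it a trace curve. Because it lies in $S_C$ and the singular curves contained in $S_C$ are precisely those of $C\cap\rho_k^{-1}(p)$, it belongs to $C$, so $\Gamma\subset C_k\cap E^k_i$ is the desired compact trace curve through $p$.

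For (c), let $E_j$ denote the strict transform of $E^k_j$ in $E$, a compact dicritical component. The argument runs parallel to (a): pick a trace curve $\Gamma_0\subset C$ with $\rho_k(\Gamma_0)$ a curve through $p$ lying in an invariant component $E_l$ of $E$ with $\rho_k(E_l)=E^k_{l_0}$; since $E^k_j$ is dicritical one has $l_0\neq j$, so $p\in E^k_{l_0}\cap E^k_j$ is a corner point. The corner curve $E_l\cap E_j$ projects onto $E^k_{l_0}\cap E^k_j$, and the $\tau=2$ local analysis at $p$ places a point of $\Gamma_0$ on $E_l\cap E_j\subset E_j$, giving $C\cap E_j\neq\emptyset$. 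The degenerate case where $\rho_k(\Gamma_0)=\{p\}$, in which $\Gamma_0$ sits in an invariant divisor created by blow-ups after step $k$, is handled by applying (b) at a suitable intermediate step $M_{k'}$ with $k<k'\leq N$, reducing to the generic situation. The main obstacle I foresee is the propagation of the local corner-incidence across $\pi_{k+2},\dots,\pi_N$ in parts (a) and (c); Remark \ref{rk:completoestable} keeps completeness stable at every intermediate step, which lets the local argument be iterated along the chain of blow-ups until the configuration stabilizes at simple points of $M$.
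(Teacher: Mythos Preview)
Your argument for (b) via the invariant surface $(\rho_k(S_C),p)$ of Remark~\ref{rk:sepparcialcompleta} is essentially sound and is in fact the alternative route the paper itself points to in the remark following its proof. However, your treatments of (a) and (c) contain genuine gaps.

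In (a) you assert that ``there is a trace curve $\Gamma_0\subset C$ with $\rho_k(\Gamma_0)$ a curve through $p$ lying in an invariant component $E^k_l$''. This is not justified: nothing in the hypotheses of (a) guarantees that $p$ belongs to any invariant component of $E^k$ (think of $k=0$, or of $p$ lying only in dicritical components of $E^k$), and even when it does, every trace curve of $C$ mapping to $p$ could be contracted to the single point $p$ by $\rho_k$. More seriously, you invoke ``the $\tau=2$ simple-point model at $p'$'' with $p'\in M_{k+1}$; but the simple-point local descriptions apply only in the final space $M=M_N$, and at an intermediate step there is no reason for $p'$ to be simple, so no such model is available. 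The same two defects recur in your argument for (c), and the handling of the ``degenerate case'' there by appealing to (b) at an unspecified intermediate step $k'$ is too vague to constitute a proof.

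The paper bypasses these issues by a simultaneous downward induction on $N-k$ for all three statements together. For (a) at level $k$ one picks $p'\in C_{k+1}\cap E^{k+1}_{k+1}$, observes that $C$ is still complete at $p'$ by Remark~\ref{rk:completoestable}, and applies (c) \emph{at level $k+1$} to the dicritical component $E^{k+1}_{k+1}$, yielding $C\cap E^N_{k+1}\ne\emptyset$ and contradicting completeness at $p$. Parts (b) and (c) at level $k$ are similarly reduced, after a (necessarily non-dicritical) punctual blow-up, to (b) at level $k+1$ applied to points of the new invariant projective plane $E^{k+1}_{k+1}$, producing a trace curve there that is then pushed into $E^{k+1}_i$ or $E^{k+1}_j$. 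If you want to salvage the surface approach for (c), the clean argument is: intersect $\rho_k(S_C)$ with $E^k_j$ to get a curve germ $(\gamma,p)$; its strict transform $\gamma'$ in $M$ lies in $E^N_j\cap S_C$; since $S_C$ is a closed germ in $(M,\rho_k^{-1}(p))$ with $S_C\cap\rho_k^{-1}(p)=C\cap\rho_k^{-1}(p)$, the non-empty set $\gamma'\cap\rho_k^{-1}(p)$ is contained in $C\cap E^N_j$. Part (a) then follows by running this argument at level $k+1$ on the newly created dicritical $E^{k+1}_{k+1}$.
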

\begin{proof} We will do induction on $N-k$ to prove  statements a), b) and c) in this order. If $k=N$, we are done. Assume that $k<N$. Since it is a local problem at $p$, if $p\notin Y_k$ we conclude by induction. Thus, we  assume that $p\in Y_k$.

{\em First case: the center of $\pi_{k+1}$ is a germ of curve $(Y_k,p)$}. We only have  to prove b) and c). Note that for any compact component $E^{k}_s$ such that $p\in E^k_s$ we have $$\pi_{k+1}^{-1}(p)\subset E^{k+1}_s$$
and there is a point $p'\in C_{k+1}\cap \pi_{k+1}^{-1}(p)$. Assume b),
we know that $C$ is complete at $p'$ and by induction hypothesis on $p'\in E^{k+1}_i$ we conclude that
there is a trace compact curve $\Gamma'\subset E^{k+1}_i\cap C_{k+1}$ with $p'\in \Gamma'$. Now, it is enough to consider $\Gamma=\pi_{k+1}(\Gamma')$. Assume c),
we know that $C$ is complete at $p'$ and by induction hypothesis on $p'\in E^{k+1}_j$ we conclude that $C\cap E^N_j\ne \emptyset$.

{\em Second case: the center of $\pi_{k+1}$ is the point $p$}. We first prove a). If $\pi_{k+1}$ is a dicritical blow-up, there is a point $p'\in C_{k+1}\cap E^{k+1}_{k+1}$. We know that $C$ is complete at $p'$ and by induction hypotesis we apply c) at $p'$ to obtain that $C\cap E^N_{k+1}\ne\emptyset$.  This contradicts the fact that $C$ is complete at $p$.

Now we prove b) and c) already assuming that $\pi_{k+1}$ is non-dicritical. Take a point $p'\in C_{k+1}\cap E^{k+1}_{k+1}$, we know that $C$ is complete at $p'$. Since $E^{k+1}_{k+1}$ is compact and invariant, by induction hypothesis on $p'\in E^{k+1}_{k+1}$, we find a trace compact curve $\Gamma''\subset C_{k+1}\cap E^{k+1}_{k+1}$. Note that for any compact component $E^k_s$ such that $p\in E^k_s$ we have that
$$
E^{k+1}_s\cap E^{k+1}_{k+1}
$$
is a projective line in the projective plane $E^{k+1}_{k+1}$. In particular there is at least one point $p''_s\in \Gamma''\cap E^{k+1}_s$. We know that $C$ is complete at the point $p''_s$.   Assume b), we apply induction hypothesis on $p''_i\in E^{k+1}_{i}$ to find a trace compact curve $\Gamma'\subset E^{k+1}_i\cap C_{k+1}$ such that $p''_i\in \Gamma'$. We conclude by taking $\Gamma=\pi_{k+1}(\Gamma')$. Assume c), we apply induction hypothesis on $p''_j\in E^{k+1}_{j}$ to find that $C\cap E^{N}_j\ne \emptyset$.
\end{proof}

\begin{remark} Proposition \ref{pro:puntoscompletos} can also be proved by invoking the germ of surface $(S_{C},p)$ obtained in
Remark \ref{rk:sepparcialcompleta}
and considering the intersections with the corresponding compact component of $E^k$. We have used the inductive arguments  because of the general style of the paper.
\end{remark}

\begin{proposition}
\label{pro:incompletodicriticoono}
 Let $C$ be an incomplete partial separatrix and consider an index  $0\leq k\leq N$. Then there is a point
 $p\in C_k$ such that $C$ is not complete at $p$ or there is a compact dicritical component $E^k_j$ such that $C_k\cap E_j^k\not=\emptyset$.
\end{proposition}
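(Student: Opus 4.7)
The plan is to unpack the hypothesis of incompleteness into a concrete witness and then track it down to step $k$. Since $C$ is incomplete, the definition supplies a compact dicritical component $E^N_j\subset E$ with $C\cap E^N_j\neq\emptyset$; I would fix a point $q\in C\cap E^N_j$ and set $p=\rho_k(q)\in C_k\cap F_k$. The proof then splits according to whether $E^N_j$ already exists at step $k$ (case $j\le k$) or is born by a later blow-up (case $j>k$). In the first case I expect to obtain the second alternative in the conclusion, and in the second case the first alternative.

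For $j\le k$, I would use that $E^N_j$ is the strict transform of $E^k_j$ under $\rho_k$, whence $\rho_k(E^N_j)=E^k_j$. Dicriticality is a birational property (generic transversality of the foliation to the component is preserved at points outside the successive blow-up centers), and compactness of $E^k_j$ is automatic as the continuous image of the compact set $E^N_j$. Since $p\in E^k_j\cap C_k$, this exhibits the compact dicritical component required by the second alternative.

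For $j>k$ I will show that $E^N_j\subset\rho_k^{-1}(p)$; together with $E^N_j$ being dicritical and meeting $C$, this is exactly the failure of completeness of $C$ at $p$. The key observation is that compactness of $E^N_j$ forces the center $Y_{j-1}$ of $\pi_j$ to be the point $\{p_{j-1}\}$ rather than a germ of curve: a curve center would make $E^j_j=\pi_j^{-1}(Y_{j-1})$ a $\mathbb{P}^1$-bundle over a non-compact germ, and the subsequent blow-ups $\pi_{j+1},\ldots,\pi_N$ only modify this bundle over its compact fibre above $p_{j-1}$, so the strict transform $E^N_j$ would inherit non-compactness. Granted $Y_{j-1}=\{p_{j-1}\}$, one computes $\rho_{j-1}(E^N_j)=\pi_j(\rho_j(E^N_j))=\pi_j(E^j_j)=\{p_{j-1}\}$, and composing with $\pi_{k,j-1}$ gives $\rho_k(E^N_j)=\{p\}$, yielding the desired inclusion.

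The only delicate step is the compactness dichotomy in the second case, and it is a direct consequence of the special relatively isolated structure of $\mathcal{S}$: point centers produce compact exceptional divisors whose strict transforms remain compact, while curve centers produce non-compact ones that remain non-compact through the rest of the sequence. Once this is in hand, both alternatives of the statement follow immediately by pushing the witness $q$ down to step $k$.
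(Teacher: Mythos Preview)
Your argument is correct and takes a genuinely different route from the paper's. The paper proceeds by downward induction on $N-k$: assuming by contradiction that at level $k$ the partial separatrix $C$ is complete at every point of $C_k$ and meets no compact dicritical component of $E^k$, it invokes Remark~\ref{rk:completoestable} to propagate completeness to every point of $C_{k+1}$ and Proposition~\ref{pro:puntoscompletos} to force the new component $E^{k+1}_{k+1}$ (if it meets $C_{k+1}$) to be invariant, contradicting the inductive hypothesis at level $k+1$.

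Your approach is direct: you locate the witnessing compact dicritical component $E^N_j$ at the final step and push the point $q\in C\cap E^N_j$ down to $p=\rho_k(q)$, then split on $j\le k$ versus $j>k$. The dichotomy ``compact exceptional component $\Leftrightarrow$ point center'' that you isolate is exactly the structural feature of the special relatively isolated sequence $\mathcal{S}$ underlying the paper's Remark~\ref{rk:completitudequirreduction} and the repeated observation that $Y_k=\{p\}$ whenever an incomplete point is involved. What you gain is a self-contained argument that bypasses Proposition~\ref{pro:puntoscompletos} and Remark~\ref{rk:completoestable}; what the paper's inductive proof buys is uniformity with the surrounding arguments (almost every proposition in this section is proved by the same $N-k$ induction scheme), so the reader does not have to switch mental models.
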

\begin{proof} Induction on $N-k$. If $k=N$ we are done, since $C$ intersects at least one compact dicritical component of the exceptional divisor. Take $k<N$. In order to find a contradiction, assume
that $C$ is complete at any $p\in C_k$ and that it does not intersect any compact dicritical component in $E^k$. We already know that $C$ is complete at any point in $C_{k+1}$ by Remark \ref{rk:completoestable}. By Proposition \ref{pro:puntoscompletos}, we have that if  $E^{k+1}_{k+1}$ is a compact component with $E^{k+1}_{k+1}\cap C_{k+1}\ne\emptyset$, then $E^{k+1}_{k+1}$ is an invariant component. This gives the desired contradiction by applying induction hypothesis.
\end{proof}

\begin{definition} We say that $p\in F_k$ is an {\em incomplete point} if and only if there is a partial separatrix $C$ such that $p\in C_k$ and $C$ is incomplete at $p$.
\end{definition}
If there are no partial separatrices $C$ such that $p\in C_k$ the point $p$ is considered to be  complete.

\section{Transition of Camacho-Sad indices}
Let us consider an irreducible compact curve $\Gamma\subset F_k\cap \mbox{\rm Sing}{\mathcal F}_k$ and an invariant compact component $E^k_i$ of $E^k$ such that $\Gamma\subset E^k_i$. Note that, since $\Gamma$ is compact, there are no dicritical components containing $\Gamma$.

Consider a plane  section $\Delta$ transverse to $\Gamma$ at a generic point $p\in \Gamma$. Taking appropriate local coordinates $x,y$ at $p\in \Delta$, the restricted foliation ${\mathcal F}_k\vert_\Delta$ is given by a $1$-form
$$
\omega=x\left\{(\lambda x+\mu y+\phi(x,y))\frac{dx}{x}-dy\right\}
$$
where $E^k_i\cap \Delta=(x=0)$, $\mu\ne 0$ and $\phi(x,y)$ has a zero of order at least two at the origin. The Camacho-Sad index of ${\mathcal F}_k\vert_\Delta$ at $p$ with respect to the invariant curve $x=0$ is by definition the value $1/\mu$, see \cite{Cam-S, Can-C-D}. We denote
$$
\mbox{\rm Ind}({\mathcal F},E_i;\Gamma)= \mbox{\rm Ind}({\mathcal F}_k\vert_\Delta, E^k_i\cap \Delta;p)=1/\mu.
$$
This index may be calculated in any step $k'\geq k$ of the reduction of singularities and at any point of the strict transform of $\Gamma$ of dimensional type two.

\begin{remark}
\label{rk:productoindices}
Assume that $\Gamma$ is contained in two compact invariant components $E^k_i, E^k_j$ of $E^k$. We have that
$\Gamma=E^k_i\cap E^k_j$. By the general properties of Camacho-Sad index \cite{Cam-S}, we have that
$$
\mbox{Ind}({\mathcal F},E_i;\Gamma)\mbox{Ind}({\mathcal F},E_j;\Gamma)=1.
$$
\end{remark}
\begin{definition}
\label{def:nodalsaddle}
 Let $\Gamma\subset \mbox{\rm Sing}(\pi^*{\mathcal F})$ be a compact curve contained in a compact invariant component $E_i$.
\begin{enumerate}
\item $\Gamma$ is a {\em nodal curve} if and only if $\mbox{Ind}({\mathcal F},E_i;\Gamma)\in {\mathbb R}_{>0}\setminus {\mathbb Q}$.
\item $\Gamma$ is a {\em real saddle curve} if and only if $\mbox{Ind}({\mathcal F},E_i;\Gamma)\in {\mathbb R}_{<0}$.
\item $\Gamma$ is a {\em complex saddle curve} if and only if $\mbox{Ind}({\mathcal F},E_i;\Gamma)\in {\mathbb C}\setminus {\mathbb R}$.
\end{enumerate}
\end{definition}
 Note that by Remark \ref{rk:productoindices} the definition above does not depend on the invariant component $E_i$ of $E$ such that $\Gamma\subset E_i$.

Take a point $p\in E^k_i$ where $E^k_i$ is a compact invariant component of $E^k$. We are interested in considering irreducible germs of curves
$$(\gamma,p)\subset (\mbox{\rm Sing}{\mathcal F}_k\cap E^k_i,p)
$$
Such a germ $(\gamma,p)$ is contained in exactly one compact curve $\Gamma\subset \mbox{\rm Sing}{\mathcal F}_k\cap E^k_i$. This allows us to put
$$
\mbox{Ind}({\mathcal F},E_i;\gamma)=\mbox{Ind}({\mathcal F},E_i;\Gamma).
$$
We denote ${\mathcal B}^k_i(p)$ the set of irreducible germs of curves $(\gamma,p)\subset (\mbox{\rm Sing}{\mathcal F}_k\cap E^k_i,p)$.

In  Proposition \ref{pro:indicesymultiplicidad} we precise a relationship between the indices, counted with multiplicity,  with respect to two incident compact components.
\begin{proposition}
\label{pro:indicesymultiplicidad}
 Consider a point $p\in \Gamma=E^k_i\cap E^k_j$ where $E^k_i$ and $E^k_j$ are compact components of $E^k$. Assume that $E^k_i$ is an invariant component of $E^k$.
\begin{enumerate}
\item[a)] If $E^k_j$ is a dicritical component and ${\mathcal G}={\mathcal F}_k\vert_{E^k_j}$ we have
\begin{equation*}
\mbox{\rm Ind}({\mathcal G},\Gamma;p)=
\sum_{\gamma\in {\mathcal B}^k_i(p)}(\gamma,\Gamma)_p \mbox{\rm Ind}({\mathcal F},E_i;\gamma),
\end{equation*}
where $(\gamma,\Gamma)_p$ is the intersection multiplicity of $\gamma,\Gamma$ at $p$.
\item[b)] If $E^k_j$ is an invariant component and $\alpha=\mbox{\rm Ind}({\mathcal F},E_i;\Gamma)$, we have
\begin{equation*}
\sum_{\gamma\in {\mathcal B}^k_i(p)\setminus \{\Gamma\}}(\gamma,\Gamma)_p \mbox{\rm Ind}({\mathcal F},E_i;\gamma)=-\alpha
\sum_{\delta\in {\mathcal B}^k_j(p)\setminus\{\Gamma\}}(\delta,\Gamma)_p \mbox{\rm Ind}({\mathcal F},E_j;\delta).
\end{equation*}
\end{enumerate}
\end{proposition}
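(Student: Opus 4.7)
The plan is to treat both parts by a local-coordinate computation at $p$, writing each Camacho-Sad index as a residue on the curve $\Gamma$ and matching the two sides via the factorisation of the local equation of $\mathrm{Sing}(\mathcal{F}_k)\cap E^k_i$.

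I fix local coordinates $(x,y,z)$ at $p$ so that $E^k_i=\{x=0\}$ and $E^k_j=\{y=0\}$, hence $\Gamma=\{x=y=0\}$ with $p$ at $z=0$. Invariance of $E^k_i$ forces a local generator of $\mathcal{F}_k$ to have the form $\omega=a\,dx+xb\,dy+xc\,dz$; in case (b) the invariance of $E^k_j$ adds $y\mid a$ and $y\mid c$, so $\omega=y\tilde{a}\,dx+xb\,dy+xy\tilde{c}\,dz$. In either case the trace of $\mathrm{Sing}(\mathcal{F}_k)$ on $E^k_i$ is locally cut by the coefficient of $dx$ restricted to $\{x=0\}$, namely $a(0,y,z)$ (resp.\ $y\tilde{a}(0,y,z)$), whose irreducible factors at $p$ identify the elements of $\mathcal{B}^k_i(p)$, with the intersection multiplicity $(\gamma,\Gamma)_p$ reading off as the order at $z=0$ of each factor restricted to $y=0$.

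For part (a), the restriction $\omega|_{y=0}=a(x,0,z)\,dx+xc(x,0,z)\,dz$ defines $\mathcal{G}$, and the standard Camacho-Sad residue formula on the surface $E^k_j$ gives $\mathrm{Ind}(\mathcal{G},\Gamma;p)=-\mathrm{Res}_{z=0}\,c(0,0,z)/a(0,0,z)\,dz$. On a transversal at a generic point of each $\gamma\in\mathcal{B}^k_i(p)$, the same recipe identifies $\mathrm{Ind}(\mathcal{F},E_i;\gamma)$ with the analogous two-variable residue (essentially $-c/a_z$ evaluated at a generic point of $\gamma$). The claim then follows by a partial-fraction / logarithmic-differentiation decomposition of the meromorphic one-form $c(0,0,z)/a(0,0,z)\,dz$ along $\Gamma$, grouped by the irreducible factorisation of $a(0,y,z)$ and using the identification of $(\gamma,\Gamma)_p$ with $\mathrm{ord}_{z=0}g_\gamma(0,z)$.

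For part (b), I apply the same procedure symmetrically on the two sides. A transversal computation along $\Gamma$ reads off $\alpha=-b(0,0,z_0)/\tilde{a}(0,0,z_0)$ at a generic $z_0$. The curves in $\mathcal{B}^k_i(p)\setminus\{\Gamma\}$ come from the non-$y$ irreducible factors of $y\tilde{a}(0,y,z)$, i.e.\ the factors of $\tilde{a}(0,y,z)$, and symmetrically $\mathcal{B}^k_j(p)\setminus\{\Gamma\}$ arises from those of $b(x,0,z)$. Writing each sum as a residue on $\Gamma$ exactly as in part (a), the relation $b(0,0,z)\equiv -\alpha\,\tilde{a}(0,0,z)$ along $\Gamma$ converts the $E^k_i$-side residue into the $E^k_j$-side residue and produces the factor $-\alpha$. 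The delicate step is this matching in case (b): one has to handle correctly components of $\mathcal{B}^k_i(p)$ or $\mathcal{B}^k_j(p)$ that are tangent to $\Gamma$ or appear with higher multiplicity, and use the integrability relation $\omega\wedge d\omega=0$ along $\Gamma$ to reconcile the coefficients of $dy$ and $dx$ in the two residue expressions; this bookkeeping is where I expect the proof to require the most care.
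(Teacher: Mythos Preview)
Your approach is genuinely different from the paper's, and the central step is not actually carried out. The paper does \emph{not} compute anything directly at the non-simple point $p$; it proves both (a) and (b) by induction on $N-k$. The base case $k=N$ is a short explicit check at simple singularities (dimensional type two or three). For $k<N$ one blows up the point $p$; the new component $E^{k+1}_{k+1}$ is a $\mathbb P^2$, and the argument combines Noether's formula $(\gamma,\Gamma)_p=(\gamma',\Gamma')_{p'}+\sum_{q\in L'_i}(\gamma',L'_i)_q$, Bezout on $E^{k+1}_{k+1}$, the self-intersection of the lines $L'_i,L'_j$, and the induction hypothesis applied at the points of $E^{k+1}_{k+1}$, treating separately the four cases ($E^k_j$ dicritical/invariant) $\times$ ($\pi_{k+1}$ dicritical/non-dicritical). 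The whole point is that transversal indices are only ever evaluated at \emph{simple} points, and the blow-up bookkeeping transports them back to $p$.

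Your direct residue strategy has a real gap at exactly this spot. The number $\mathrm{Ind}(\mathcal F,E_i;\gamma)$ is, by definition in the paper, the transversal Camacho-Sad index at a \emph{generic} (hence simple) point of the global curve through $\gamma$; it is not a quantity read off from the jet of $a,b,c$ at $p$. Your sentence ``essentially $-c/a_z$ at a generic point of $\gamma$'' already presupposes a specific transversal and is not correct in general: a short computation with, say, $a(0,y,z)=y^2-z^2$ shows that the transversal index along $\gamma=\{y=z\}$ computed via the section $\{y=\text{const}\}$ involves $c$, while via $\{z=\text{const}\}$ it involves $b$, and their agreement (and constancy along $\gamma$) is a consequence of $\omega\wedge d\omega=0$ that you have not used in part (a). More importantly, the assertion that the single residue $-\mathrm{Res}_{z=0}\,c(0,0,z)/a(0,0,z)$ decomposes as $\sum_\gamma(\gamma,\Gamma)_p\,\mathrm{Ind}(\mathcal F,E_i;\gamma)$ is literally the statement of (a); calling it a ``partial-fraction / logarithmic-differentiation decomposition'' does not prove it. When branches of $\{a(0,y,z)=0\}$ are tangent to $\Gamma$, are singular, or occur with multiplicity, there is no elementary partial-fraction identity that produces the transversal indices with the correct weights; this is precisely why the paper resorts to the blow-up induction, which reduces everything to the simple case $k=N$.
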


\begin{proof} We do induction on $N-k$. Let us consider first the case $k=N$:
\begin{enumerate}

\item If $p$ is non singular, it belongs to at most one invariant component of the divisor. We have a) with ${\mathcal B}^k_i(p)=\emptyset$. Thus we are done.

\item  Assume that $p$ is of dimensional type two and $E_j$ is a dicritical component. Then the singular locus is non singular at $p$ and  $E_j$ gives a section transversal to it. We are done by the definition of the generic index.

\item Assume $p$ is of dimensional type two and $E_j$ is invariant. The singular locus is $\Gamma$. In this case ${\mathcal B}^k_i(p)={\mathcal B}^k_j(p)=\{\Gamma\}$ and there is nothing to prove.

\item
 \label{enumerate4}
 Assume that $p$ is of dimensional type three. Then $E_j$ is necessarily invariant  and  there are local coordinates $x,y,z$ at $p$ such that
$$
E_i=(x=0),\; E_j=(y=0),\quad  \Gamma=(x=y=0),
$$
the plane $z=0$ is invariant and the singular locus is given by $\Gamma\cup \gamma\cup \delta$, where
 $$ \gamma=(x=z=0);\quad \delta= (y=z=0).$$
Moreover, the foliation ${\mathcal F}$ is given locally at $p$ by an integrable 1-form of the type
$$
\omega=\frac{dx}{x}+ (-\alpha+b(x,y,z))\frac{dy}{y}+(-\beta+c(x,y,z))\frac{dz}{z},\quad \alpha\ne 0\ne\beta.
$$
By the integrability condition $\omega\wedge dw=0$, we have
\begin{eqnarray*}
b(x,y,z)&=&xb'(x,y,z)+y b''(x,y,z) \\ c(x,y,z)&=& xc'(x,y,z)+yc''(x,y,z)
\end{eqnarray*}
and thus
$$
\frac{-\beta+c(x,y,z)}{-\alpha+b(x,y,z)}=\frac{\beta}{\alpha}+yf'(x,y,z)+zf''(x,y,z).
$$
Then, we have
\begin{eqnarray*}
\mbox{Ind}({\mathcal F},E_i;\Gamma)=\alpha,\;
 \mbox{Ind}({\mathcal F},E_i;\gamma)=\beta,\;
 \mbox{Ind}({\mathcal F},E_j;\delta)=-\beta/\alpha.
\end{eqnarray*}
The desired relation is
$
\beta= -\alpha (-\beta/\alpha)
$,
that is obviously satisfied.
\end{enumerate}

Now, suppose that $k<N$. If $p\notin Y_k$ we are done by induction; hence we  assume $p\in Y_k$. Moreover, the center $Y_k$ of the blow-up $\pi_{k+1}$ cannot be a germ of curve, since there are two compact components of $E^k$ through $p$ and $Y_k$ should have normal crossings with $E^k$. Thus $Y_k=\{p\}$.

Let us give some remarks and fix notations.
 We put
$$
\Gamma'=E^{k+1}_i\cap E^{k+1}_j,\; L'_i=E^{k+1}_i\cap E^{k+1}_{k+1},\; L'_j=E^{k+1}_j\cap E^{k+1}_{k+1},\; p'=\Gamma'\cap L'_i.
$$
 In view of Noether's formula for the intersection multiplicity (see \cite{Ful} for instance), given \break $\gamma\in {\mathcal  B}(i;p)\setminus \{\Gamma\}$ we have
\begin{equation*}
 (\gamma,\Gamma)_p=(\gamma',\Gamma')_{p'}+\sum_{q\in L'_i} (\gamma',L'_i)_q
 \end{equation*}
 where $\gamma'$ stands for the strict transform of $\gamma$.
Let us also note that
$$
\mbox{Ind}({\mathcal F},E_i;\gamma)= \mbox{Ind}({\mathcal F},E_i;\gamma'),
$$
since the computations are made at generic points of $\gamma\subset E_i^k$.

On the other hand, note that $E^{k+1}_{k+1}$ is a projective plane and $L'_i,L'_j\subset E^{k+1}_{k+1}$ are both projective lines. In particular, let $\Lambda\subset E^{k+1}_{k+1}$ be a global irreducible curve $\Lambda\subset \mbox{\rm Sing}({\mathcal F}_{k+1})$ of degree $d_\Lambda$ with $\Lambda\ne L'_i,L'_j$. By Bezout's Theorem, we know that
\begin{equation*}
d_\Lambda=\sum_{q\in L'_i;\;q\in \delta\subset \Lambda} (\delta,L'_i)_q= \sum_{q\in L'_j;\;q\in \delta\subset \Lambda} (\delta,L'_j)_q ,
\end{equation*}
where $\delta$ runs over the irreducible branches of $\Lambda$ at $q$.

Now, we have four cases to consider:
\begin{enumerate}
\item[i)] $E^k_j$ is dicritical and $\pi_{k+1}$ is a dicritical blow-up.
\item[ii)] $E^k_j$ is dicritical and $\pi_{k+1}$ is a non dicritical blow-up.
\item[iii)] $E^k_j$ is invariant and $\pi_{k+1}$ is a dicritical blow-up.
\item[iv)] $E^k_j$ is invariant and $\pi_{k+1}$ is a non dicritical blow-up.
\end{enumerate}
 
{\em Assume first that $E^k_j$ is a dicritical component.} Let us note that
$$\Gamma\not\subset\mbox{\rm Sing}({\mathcal F}_k);\;\Gamma',L'_j\not\subset\mbox{\rm Sing}({\mathcal F}_{k+1}).$$
The induced  induced foliation  ${\mathcal G}'$ by ${\mathcal F}_{k+1}$ on $E^{k+1}_j$ is the transform of $\mathcal G$ by the restriction
 $$
 \tilde\pi_{k+1}: E^{k+1}_j\rightarrow E^k_j
 $$
 of the blow-up $\pi_{k+1}$. In particular, by the known properties of Camacho-Sad index (see \cite{Cam-S, Can-C-D}) we have that
 \begin{equation*}
 \label{eq:csindex}
 \mbox{\rm Ind}({\mathcal G},\Gamma;p)= \mbox{\rm Ind}({\mathcal G}',\Gamma';p')+1.
 \end{equation*}

{\em First case: $\pi_{k+1}$ is a dicritical blow-up}. Let us denote  ${\mathcal G}_1$ the induced foliation by ${\mathcal F}_{k+1}$ on $E^{k+1}_{k+1}$. The self-intersection of the projective line $L'_i$ in the projective plane $E^{k+1}_{k+1}$ is equal to $+1$. Then  we have
\begin{equation*}
\sum_{q\in L'_i}\mbox{\rm Ind}({\mathcal G}_1,L'_i;q)=+1,
\end{equation*}
Let us note that since $\Gamma', L'_i$ are not in the singular locus we have a bijection
$$
{\mathcal B}^k_i(p)\leftrightarrow \bigcup_{q\in L'_i} {\mathcal B}^{k+1}_i(q)
$$
given by the strict transform $\gamma\mapsto \gamma'$. Applying induction hypothesis to the points of $L'_i$   we deduce that
$$
\sum_{q\in L'_i;\gamma\in {\mathcal B}^k_i(p)}(\gamma',L'_i)_q\mbox{\rm Ind}({\mathcal F},E_i;\gamma)=\sum_{q\in L'_i}\mbox{\rm Ind}({\mathcal G}_1,L'_i;q)=+1.
$$
Applying induction hypothesis at $p'$ as well, we have
\begin{eqnarray*}
&&\sum_{\gamma\in {\mathcal B}^k_i(p)}(\gamma,\Gamma)_p \mbox{Ind}({\mathcal F},E_i;\gamma)=\\
&&=\sum_{\gamma\in {\mathcal B}^k_i(p)}
\left(
(\gamma',\Gamma')_{p'}
+\sum_{q\in L'_i}(\gamma',L'_i)_q \right)
\mbox{Ind}({\mathcal F},E_i;\gamma)=\\
&&=\mbox{Ind}({\mathcal G}',\Gamma';p')+1= \mbox{Ind}({\mathcal G},\Gamma;p).
\end{eqnarray*}
This case is ended.

{\em Second case: $\pi_{k+1}$ is a non dicritical blow-up}.
Let us denote $\tilde \alpha= \mbox{Ind}({\mathcal F},E_{i};L'_i)$. By induction hypothesis at $p'$ we have
\begin{equation*}
\mbox{Ind}({\mathcal G}',\Gamma';p')=\tilde \alpha +
\sum_{\gamma\in {\mathcal B}^k_i(p)}(\gamma',\Gamma')_{p'} \mbox{Ind}({\mathcal F},E_i;\gamma'),
\end{equation*}
and since $\mbox{Ind}({\mathcal G}',\Gamma';p')=\mbox{Ind}({\mathcal G},\Gamma;p)-1$, we can put
\begin{equation*}
\mbox{Ind}({\mathcal G},\Gamma;p)=\tilde \alpha +1+
\sum_{\gamma\in {\mathcal B}^k_i(p)}(\gamma',\Gamma')_{p'} \mbox{Ind}({\mathcal F},E_i;\gamma').
\end{equation*}
By Noether's Theorem we have
$
(\gamma',\Gamma')_{p'}=(\gamma,\Gamma)_p-\sum_{q\in L'_i}(\gamma',L'_i)_q
$.
Then
\begin{equation*}
\mbox{Ind}({\mathcal G},\Gamma;p)= \sum_{\gamma\in {\mathcal B}^k_i(p)}(\gamma,\Gamma)_p \mbox{Ind}({\mathcal F},E_i;\gamma)
+ \beta
\end{equation*}
where
$$
\beta= \tilde \alpha +1-
\sum_{q\in L'_i}\sum_{\gamma\in {\mathcal B}^k_i(p)}(\gamma',L'_i)_q \mbox{Ind}({\mathcal F},E_i;\gamma').
$$
Now, it is enough to show that $\beta=0$. By induction hypothesis in the statement b) referred to
$E^{k+1}_i$ and $E^{k+1}_{k+1}$ we have that
\begin{equation*}
\sum_{q\in L'_i;\;\gamma\in {\mathcal B}^k_i(p)}(\gamma',L'_i)_q \mbox{Ind}({\mathcal F},E_i;\gamma')=-\tilde\alpha
\sum_{\Lambda\subset E^{k+1}_{k+1},\;\Lambda\ne L'_i}d_\Lambda \mbox{Ind}({\mathcal F},E_{k+1};\Lambda).
\end{equation*}
Now, we apply induction hypothesis in the statement a) referred to $E^{k+1}_{k+1}$ and $E^{k+1}_j$ to obtain
\begin{equation*}
\sum_{q\in L'_j}\mbox{Ind}({\mathcal G}',L'_j;q)=1/\tilde \alpha+
\sum_{\Lambda\subset E^{k+1}_{k+1};\Lambda\ne L'_i}d_{\Lambda}\mbox{Ind}({\mathcal F},E_{k+1};\Lambda),
\end{equation*}
where $1/\tilde\alpha= \mbox{Ind}({\mathcal F},E_{k+1};L'_i)$. Recalling that the self intersection of $L'_j$ in $E^{k+1}_j$ is equal to $-1$, we have
$
-1= \sum_{q\in L'_j}\mbox{Ind}({\mathcal G}',L'_j;q)
$ and
we obtain
$$
-1=1/\tilde\alpha -(1/\tilde\alpha)\sum_{q\in L'_i;\;\gamma\in {\mathcal B}^k_i(p)}(\gamma',L'_i)_q \mbox{Ind}({\mathcal F},E_i;\gamma').
$$
That is
\begin{eqnarray*}
\beta=\tilde \alpha +1- \sum_{q\in L'_i;\;\gamma\in {\mathcal B}^k_i(p)}(\gamma',L'_i)_q \mbox{Ind}({\mathcal F},E_i;\gamma')=0
\end{eqnarray*}
and we are done.

{\em Let us suppose finally that  $E^k_j$ is an invariant component.}

{\em First case: $\pi_{k+1}$ is a  dicritical blow-up}. Let ${\mathcal G}_1$ be the induced foliation by ${\mathcal F}_{k+1}$ on $E^{k+1}_{k+1}$.
By applying induction hipothesis at $L'_i$ and $L'_j$  and recalling that the self-intersection of $L'_i,L'_j$ inside $E^{k+1}_{k+1}$ is $+1$, we have
$$
1= \sum_{q\in L'_i}\mbox{Ind}({\mathcal G}_1,L'_i;q)= \sum_{q\in L'_j}\mbox{Ind}({\mathcal G}_1,L'_j;q),
$$
and hence
\begin{eqnarray}
\label{eq:csindexdos}
1=\alpha+
\sum_{q\in L'_i}\sum_{\gamma\in {\mathcal B}^k_i(p)\setminus \{\Gamma\}}(\gamma',L'_i)_q\mbox{Ind}({\mathcal F},E_i;\gamma'),\\
\label{eq:csindextres}
1= (1/\alpha)+
\sum_{q\in L'_j}\sum_{\delta\in {\mathcal B}^k_j(p)\setminus \{\Gamma\}}(\delta',L'_j)_q\mbox{Ind}({\mathcal F},E_j;\delta').
\end{eqnarray}
Also, by induction hypothesis at $p'$ referred to $E^{k+1}_i$ and $E^{k+1}_j$ we have
\begin{equation}
\label{eq:csindexcuatro}
\sum_{\gamma\in {\mathcal B}^k_i(p)\setminus\{\Gamma\}}(\gamma',\Gamma')_{p'} \mbox{Ind}({\mathcal F},E_i;\gamma')=-\alpha
\sum_{\delta\in {\mathcal B}^k_j(p)\setminus\{\Gamma\}}(\delta',\Gamma')_{p'} \mbox{Ind}({\mathcal F},E_j;\delta').
\end{equation}
 Using Noether's formula and the equalities (\ref{eq:csindexdos}, \ref{eq:csindextres}) , we have
 \begin{eqnarray*}
 \sum_{\gamma\in {\mathcal B}^k_i(p)\setminus\{\Gamma\}}(\gamma,\Gamma)_{p} \mbox{Ind}({\mathcal F},E_i;\gamma)= \sum_{\gamma\in {\mathcal B}^k_i(p)\setminus\{\Gamma\}}(\gamma',\Gamma')_{p'} \mbox{Ind}({\mathcal F},E_i;\gamma')+ (1-\alpha),\\
  -\alpha\sum_{\delta\in {\mathcal B}^k_j(p)\setminus\{\Gamma\}}(\delta,\Gamma)_{p} \mbox{Ind}({\mathcal F},E_i;\delta)= -\alpha\sum_{\gamma\in {\mathcal B}^k_j(p)\setminus\{\Gamma\}}(\delta',\Gamma')_{p'} \mbox{Ind}({\mathcal F},E_i;\delta')+ (1-\alpha)
 \end{eqnarray*}
 and we are done by Equation (\ref{eq:csindexcuatro}).

 {\em Second case: $\pi_{k+1}$ is a non dicritical blow-up}. Let us denote
 $$
 \beta=\mbox{Ind}({\mathcal F}, E_i;L'_i);\quad \rho= \mbox{Ind}({\mathcal F}, E_j;L'_j).
 $$
 We have
 $
 1/\beta=\mbox{Ind}({\mathcal F}, E_{k+1};L'_i)$ and $1/\rho= \mbox{Ind}({\mathcal F}, E_{k+1};L'_j).
 $
 Let us put
 $$
 \epsilon= \sum_{\Lambda\subset E_{k+1}^{k+1}, \Lambda\ne L'_i,L'_j}d_\Lambda \mbox{Ind}({\mathcal F}, E_{k+1};\Lambda).
 $$
 Now, if we take a generic plane section $\Delta$ at $p$ and we apply Camacho-Sad's equality to ${\mathcal F}_k\vert_\Delta$ after the blow-up $\pi_{k+1}$, we obtain
 \begin{equation*}
 \label{eq:cssection}
 -1= 1/\beta +1/\rho +\epsilon.
 \end{equation*}
 By induction hypothesis referred to $E_i^{k+1}$ and $E_{k+1}^{k+1}$, we have the following equality
 \begin{eqnarray*}
 \alpha+\sum_{q\in L'_i}\sum_{\gamma\in {\mathcal B}^k_i(p)\setminus\{\Gamma\}}(\gamma',L'_i)_q \mbox{Ind}({\mathcal F}, E_i;\gamma')=
 -\beta((1/\rho)+\epsilon)=\beta +1
 \end{eqnarray*}
 and thus
 \begin{equation*}
 \sum_{q\in L'_i}\sum_{\gamma\in {\mathcal B}^k_i(p)\setminus\{\Gamma\}}(\gamma',L'_i)_q \mbox{Ind}({\mathcal F}, E_i;\gamma')=-\alpha +\beta +1.
 \end{equation*}
 Now, applying induction referred to $E_j^{k+1}$ and $E_{k+1}^{k+1}$, we have
 \begin{eqnarray*}
 (1/\alpha)+\sum_{q\in L'_i}\sum_{\delta\in {\mathcal B}^k_j(p)\setminus\{\Gamma\}}(\delta',L'_i)_q \mbox{Ind}({\mathcal F}, E_i;\delta')=
 -\rho((1/\beta)+\epsilon)=\rho+1
 \end{eqnarray*}
 and thus
 \begin{equation*}
 -\alpha \sum_{q\in L'_i}\sum_{\delta\in {\mathcal B}^k_j(p)\setminus\{\Gamma\}}(\delta',L'_i)_q \mbox{Ind}({\mathcal F}, E_i;\delta')=1-\alpha(\rho+1).
 \end{equation*}
 Applying induction hypothesis at $p'$, we have
 \begin{eqnarray*}\beta+
 \sum_{\gamma\in {\mathcal B}^k_i(p)\setminus\{\Gamma\}}(\gamma',\Gamma')_{p'} \mbox{Ind}({\mathcal F}, E_j;\gamma')=\\=
 -\alpha\left(\rho +\sum_{\delta\in {\mathcal B}^k_j(p)\setminus\{\Gamma\}}(\delta',\Gamma')_{p'} \mbox{Ind}({\mathcal F}, E_j;\delta')\right).
 \end{eqnarray*}
 Thus, by Noether's equality, we only have to verify that
 $$
 (-\alpha+\beta+1) -\beta= (1-\alpha(\rho+1)) +\alpha\rho
 $$
and this is evident.
 \end{proof}

\begin{corollary}
\label{cor:indicesymultiplicidad}
Let $p\in F_k$ be a point such that $p\in E^k_i\cap E^k_j\cap E^k_\ell$ where $E^k_i$, $E^k_j$ and $E^k_\ell$ are compact invariant components of $E^k$. Let us denote
\begin{eqnarray*}
&\Gamma_\ell= E^k_i\cap E^k_j, \; \Gamma_j= E^k_i\cap E^k_\ell,\; \Gamma_i= E^k_j\cap E^k_\ell.\\
&\alpha=\mbox{\rm Ind}({\mathcal F}, E_i;\Gamma_\ell),\;\beta=\mbox{\rm Ind}({\mathcal F}, E_i;\Gamma_j), \;\rho=
\mbox{\rm Ind}({\mathcal F}, E_j;\Gamma_i).
\end{eqnarray*}
Then, we have
$
\beta=-\alpha\rho
$.
\end{corollary}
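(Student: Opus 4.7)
My plan is to deduce the relation $\beta = -\alpha\rho$ by reproducing at the point $p$ the local computation carried out in case (4) of the base step of the proof of Proposition \ref{pro:indicesymultiplicidad} b). The geometric hypothesis that $p$ lies in the transverse intersection of three compact invariant components of $E^k$ provides exactly the local normal form used in that case, so no induction on the reduction of singularities is needed.

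First, I would choose local coordinates $(x,y,z)$ at $p$ so that $E^k_i=(x=0)$, $E^k_j=(y=0)$ and $E^k_\ell=(z=0)$; correspondingly, $\Gamma_\ell=(x=y=0)$, $\Gamma_j=(x=z=0)$ and $\Gamma_i=(y=z=0)$. Since all three coordinate planes are invariant for $\mathcal{F}_k$, the foliation is defined locally by a logarithmic $1$-form which, after normalizing the coefficient of $dx/x$ and isolating the linear constants, takes the shape
\begin{equation*}
\omega = \frac{dx}{x} + (-\alpha + b(x,y,z))\,\frac{dy}{y} + (-\beta + c(x,y,z))\,\frac{dz}{z},
\end{equation*}
with $b(0)=c(0)=0$. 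The identification of the constants $\alpha$ and $\beta$ with $\mathrm{Ind}(\mathcal{F},E_i;\Gamma_\ell)$ and $\mathrm{Ind}(\mathcal{F},E_i;\Gamma_j)$ is obtained by restricting $\omega$ to the generic planes $\{z=z_0\}$ and $\{y=y_0\}$ respectively, exactly as in the paper's case (4).

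Next, I would invoke the integrability condition $\omega\wedge d\omega=0$: as already shown in case (4), this forces $b$ and $c$ to lie in the ideal $(x,y)$, so that the quotient $(-\beta+c)/(-\alpha+b)$ equals $\beta/\alpha$ on $\{y=z=0\}$. Restricting $\omega$ to a generic plane $\{x=x_0\}$ transverse to $\Gamma_i$ and reading off the Camacho--Sad index at the origin with respect to $\{y=0\}$ then yields $\rho = -\beta/\alpha$, which is equivalent to the claim $\beta=-\alpha\rho$.

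The main obstacle I expect is to justify the normalization of $\omega$ at $p$: a priori the three coefficients of an arbitrary defining logarithmic form $A\,dx/x+B\,dy/y+C\,dz/z$ could all vanish at $p$, in which case dividing by $A$ is not allowed. However, the hypothesis that $\alpha$ is a finite nonzero complex number (being the generic transversal-type eigenvalue along the compact curve $\Gamma_\ell$) forces $A(0,0,z)$ to be non-identically zero, so after multiplying $\omega$ by a suitable meromorphic function we may arrange $A$ to be a unit at $p$. Once this normalization is established, the remainder of the argument is the standard bookkeeping of the transition of Camacho--Sad indices already performed in the proof of Proposition \ref{pro:indicesymultiplicidad}, now applied verbatim at $p$ rather than at a simple singularity of the final model.
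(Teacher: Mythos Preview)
Your approach has a genuine gap. The local normal form
\[
\omega \;=\; \frac{dx}{x} + (-\alpha+b)\,\frac{dy}{y} + (-\beta+c)\,\frac{dz}{z},\qquad b(0)=c(0)=0,
\]
with \emph{holomorphic} $b,c$, is only available when the leading coefficient $A$ of the logarithmic form $A\frac{dx}{x}+B\frac{dy}{y}+C\frac{dz}{z}$ is a unit at $p$. This is guaranteed at a simple singularity of dimensional type three --- the situation of item~(4) in the proof of Proposition~\ref{pro:indicesymultiplicidad}, where $k=N$ --- but \emph{not} at an intermediate step $k<N$. There the point $p$ is in general a non-simple point of $\mathcal F_k,E^k$ (it is a future center of blow-up), and the foliation may well have multiplicity $\geq 3$ at $p$, forcing $A(p)=B(p)=C(p)=0$. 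Your proposed fix of multiplying by a meromorphic function does not help: if $A(0,0,z)=z^m u(z)$ with $m\geq 1$, dividing by $A$ produces meromorphic coefficients $b=B/A+\alpha$, $c=C/A+\beta$, and the integrability computation you invoke from case~(4) (which needs $b,c$ holomorphic at $p$ to conclude $b\in(x,y)$, $c\in(x,z)$) no longer applies.

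One can see the obstruction directly. Writing $\omega=A\frac{dx}{x}+B\frac{dy}{y}+C\frac{dz}{z}$ and using that the transversal indices are constant along each of the three curves near $p$, one obtains
\[
B+\alpha A\in(x,y),\qquad C+\beta A\in(x,z),\qquad C+\rho B\in(y,z).
\]
Evaluating at the origin yields $(\beta+\alpha\rho)\,A(p)=0$, which gives the relation only when $A(p)\neq 0$; when $A(p)=0$ these three constraints alone do not force $\beta=-\alpha\rho$. This is exactly why the paper argues by induction on $N-k$: blowing up $p$ produces new triple points at which the induction hypothesis applies, and the recursion bottoms out at the simple case $k=N$ where your direct computation is valid.
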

\begin{proof} Induction on $N-k$. If $k=N$, we are done (see (\ref{enumerate4}) in the proof of the case $k=N$ in Proposition \ref{pro:indicesymultiplicidad}).
Assume that $k<N$ and $p\in Y_k$ as in previous proofs. We have that $Y_k=\{p\}$.  If the blow-up is non dicritical, we put
$$
\nu=\mbox{\rm Ind}({\mathcal F}, E_i;E_i\cap E_{k+1}),\; \xi=\mbox{\rm Ind}({\mathcal F}, E_\ell;E_\ell\cap E_{k+1}),\; \mu=\mbox{\rm Ind}({\mathcal F}, E_j;E_j\cap E_{k+1}).
$$
By induction hypothesis, we have
$
\nu=-\alpha\mu, \;\nu=-\xi\beta,\; \mu=-\xi\rho
$.
That is, we have $\xi\beta=\alpha\mu=-\alpha\xi\rho$ and thus $\beta=-\alpha\rho$.

Assume now that the blow-up is dicritical. We denote by $\Gamma'_\ell,\Gamma'_j,\Gamma'_i$ the strict transforms of $\Gamma_\ell,\Gamma_j,\Gamma_i$ respectively. We also denote by
\begin{equation*}
{\mathcal B}_i^*={\mathcal B}^k_i(p)\setminus \{\Gamma_\ell,\Gamma_j\};\quad
{\mathcal B}_j^*={\mathcal B}^k_j(p)\setminus \{\Gamma_\ell,\Gamma_i\};\quad
{\mathcal B}_\ell^*={\mathcal B}^k_\ell(p)\setminus \{\Gamma_i,\Gamma_j\};
\end{equation*}
and we put
$$
I_u^{(v)}=\sum_{\gamma\in {\mathcal B}_u^*}(\Gamma_v,\gamma)_p\mbox{\rm Ind}({\mathcal F},E_u;\gamma)
$$
for  $u\ne v$ with $u,v\in\{i,j,\ell\}$.
Given a germ of curve $\gamma$ we denote by $\gamma'$ the strict transform of $\gamma$,  as usual. Take also the following notations
$$
p'_u=\Gamma'_u\cap E_{k+1}^{k+1},\;  L'_{u}= E_u^{k+1}\cap E^{k+1}_{k+1}; \quad u\in \{i,j,\ell\}
$$
and
\begin{equation*}
I'_u=\sum_{{q'\in L'_u}, {\gamma\in {\mathcal B}_u^*}}(L'_u,\gamma')_{q'}\mbox{\rm Ind}({\mathcal F},E_i;\gamma)
;\quad u\in \{i,j,\ell\}.
\end{equation*}
Finally, we put
$$
{I'}_u^{(v)}=\sum_{\gamma\in {\mathcal B}_u^*}(\Gamma'_v,\gamma')_{p'_v}\mbox{\rm Ind}({\mathcal F},E_u;\gamma')
$$
for  $u\ne v$ with $u,v\in\{i,j,\ell\}$.

By Noether's formula, we have
$$
I_u^{(v)}=I'_u+{I'}_i^{(v)}, \mbox{ for } u,v\in\{i,j,\ell\},  u\ne v.
$$
Now, by applying  part a) of Proposition \ref{pro:indicesymultiplicidad} to the exceptional divisor and each of three other divisors, we have
\begin{eqnarray}
\label{eq:dicritica}
\alpha+\beta+{I'}_i=\frac{1}{\beta}+\frac{1}{\rho}+I'_\ell=\frac{1}{\alpha}+\rho+I'_{j}=1.
\end{eqnarray}
By Proposition \ref{pro:indicesymultiplicidad} we have
\begin{eqnarray*}
&I_i^{(\ell)}=-\alpha I_j^{(\ell)}; \; I_i^{(j)}=-\beta I_\ell^{(j)};\; I_\ell^{(i)}=-(1/\rho) I_j^{(i)}.\\
&{I'}_i^{(\ell)}=-\alpha {I'}_j^{(\ell)}; \; {I'}_i^{(j)}=-\beta {I'}_\ell^{(j)};\; {I'}_\ell^{(i)}=-(1/\rho) {I'}_j^{(i)}.
\end{eqnarray*}
We deduce that
\begin{eqnarray*}
&I'_i=-\alpha I'_j; \; I'_i=-\beta I'_\ell;\; I'_\ell=-(1/\rho) I'_j.
\end{eqnarray*}
This implies that
$$
-\alpha I'_j=\beta(1/\rho) I'_j.
$$
Hence, if $I'_j\ne 0$ we conclude that $\beta=-\alpha\rho$ and we are done.

Assume now that $I'_j=0$ and hence $I'_i=I'_j=I'_\ell=0$.  By Equation \ref{eq:dicritica} we have
$$
\alpha+\beta=(1/\beta)+(1/\rho)=(1/\alpha)+\rho=1.
$$
We deduce that $1+\alpha\rho=\alpha$, hence $1=\alpha(1-\rho)$, but $1=\alpha+\beta$. This implies that $\beta=-\alpha\rho$ as desired.
\end{proof}
Let us consider a point $p$ in a compact invariant component $E^k_i$ of $E^k$ and a partial separatrix $C$. We denote by
$
{\mathcal B}^k_i(C;p)
$
the set of germs of curve $(\gamma,p)$ such that $(\gamma,p)\subset (C_k\cap E^k_i,p)$.
\begin{corollary}
\label{cor:transitioncompleta}
Let us consider a point  $p\in \Gamma= E^k_i\cap E^k_j$, where $E^k_i$ and $E^k_j$ are compact invariant components of $E^k$ and a partial separatrix $C$. Assume that $p$ is a complete point for $C$.
We have
$$
\sum_{\gamma\in{\mathcal B}_i^k(C;p)}(\gamma,\Gamma)_p\mbox{\rm Ind}({\mathcal F},E^k_i;\gamma)=-\alpha \sum_{\eta\in{\mathcal B}_j^k(C;p)}(\eta,\Gamma)_p\mbox{\rm Ind}({\mathcal F},E^k_j;\eta).
$$
where $\alpha=\mbox{\rm Ind}({\mathcal F},E^k_i;\Gamma)$.
\end{corollary}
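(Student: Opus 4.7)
The plan is to prove the corollary by induction on $N-k$, mimicking the inductive structure of the proof of Proposition \ref{pro:indicesymultiplicidad}(b) but restricting both sums to branches lying in the partial separatrix $C$. Completeness of $C$ at $p$ is what lets us propagate the hypothesis under the blow-up $\pi_{k+1}$: by Remark \ref{rk:completoestable}, $C$ is complete at every point of $C_{k+1}\cap\pi_{k+1}^{-1}(p)$, so the inductive hypothesis is available at each point above $p$.

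For the base case $k=N$, the point $p$ is a final simple singularity lying on the corner $\Gamma=E_i\cap E_j$ of two compact invariant components. If $p$ has dimensional type two, the entire germ of the singular locus at $p$ is $\Gamma$ itself, so $\mathcal{B}^N_i(C;p)=\mathcal{B}^N_j(C;p)=\emptyset$ and the identity is trivial. If $p$ has dimensional type three, the only additional trace branches are the two curves $\gamma_i=(x=z=0)\subset E_i$ and $\gamma_j=(y=z=0)\subset E_j$ coming from the third invariant surface $(z=0)=S_C$; both belong to $C$, and the identity reduces to the explicit relation $\beta=-\alpha(-\beta/\alpha)$ already checked in item~(4) of the $k=N$ analysis of Proposition \ref{pro:indicesymultiplicidad}.

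For the inductive step, if $p\notin Y_k$ the result follows from the hypothesis at step $k+1$, so we may assume $p\in Y_k$. Proposition \ref{pro:puntoscompletos}(a) rules out the possibility that $Y_k=\{p\}$ with $\pi_{k+1}$ dicritical, so three cases remain: $Y_k=\{p\}$ with non-dicritical blow-up, or $Y_k$ a germ of curve through $p$ with a dicritical or non-dicritical blow-up. In each case I lift every branch $\gamma\in\mathcal{B}^k_i(C;p)$ to its strict transform $\gamma'$ and use Noether's formula to write
\[
(\gamma,\Gamma)_p=(\gamma',\Gamma')_{p'}+\sum_{q\in L'_i}(\gamma',L'_i)_q,
\]
with $p'=\Gamma'\cap L'_i$ and $L'_i=E^{k+1}_i\cap E^{k+1}_{k+1}$, and analogously on the $E_j$ side. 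Applying the inductive hypothesis at $p'$ handles the first term, and the generic-point contributions $\sum_{q\in L'_i}$ are computed by applying the inductive hypothesis at each $q\in L'_i$ (where $C$ is still complete). These line sums along $L'_i$ and $L'_j$ are then paired and combined with the existing tools: Proposition \ref{pro:indicesymultiplicidad}(a) and the Camacho-Sad balance on a transverse plane section in the dicritical case, and Corollary \ref{cor:indicesymultiplicidad} applied to the triple $(E^{k+1}_i,E^{k+1}_j,E^{k+1}_{k+1})$ to relate indices on $E_i^{k+1}$ to those on $E_{k+1}^{k+1}$ in the non-dicritical case.

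The main obstacle will be the bookkeeping in the case $Y_k=\{p\}$ (non-dicritical): here $E^{k+1}_{k+1}$ is a projective plane containing the two exceptional lines $L'_i,L'_j$ meeting at $p'$, and one must verify that only branches of $C_{k+1}$ — not of other partial separatrices or corner curves — enter the accounting. Completeness of $C$ at $p$ is exactly what guarantees that any new branch created at step $k+1$ either belongs to $C_{k+1}$ or is a corner curve (whose contributions cancel in pairs via Corollary \ref{cor:indicesymultiplicidad}, as is already implicit in the proof of Proposition \ref{pro:indicesymultiplicidad}(b)). After this pairing and Noether re-assembly, the identity at step $k$ follows.
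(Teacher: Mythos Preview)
Your approach is essentially the paper's: induction on $N-k$, Noether's formula to split the sum into a contribution at $p'=\Gamma'\cap E^{k+1}_{k+1}$ and a contribution along $L'_i$ (resp.\ $L'_j$), the inductive hypothesis at $p'$ and at the points of $L'_i,L'_j$, and Corollary~\ref{cor:indicesymultiplicidad} to identify $\beta/\rho$ with $-\alpha$ and finish.

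Two minor cleanups. First, the curve-center cases you list are vacuous: since $p\in E^k_i\cap E^k_j$ lies on \emph{two compact} components, a germ of curve $(Y_k,p)$ with normal crossings with $E^k$ and $Y_k\cap F_k=\{p\}$ cannot exist (this is exactly the observation made in the proof of Proposition~\ref{pro:indicesymultiplicidad}), so only $Y_k=\{p\}$ occurs, and by Proposition~\ref{pro:puntoscompletos}(a) that blow-up is non-dicritical. No dicritical case needs to be treated. Second, your last paragraph overstates the role of completeness: the sums are restricted to $\mathcal{B}^k_i(C;p)$ and $\mathcal{B}^k_j(C;p)$ from the outset, so branches of other partial separatrices never enter the accounting; completeness is used only to guarantee (via Proposition~\ref{pro:puntoscompletos}(a)) that $\pi_{k+1}$ is non-dicritical and (via Remark~\ref{rk:completoestable}) that the inductive hypothesis applies at every point of $C_{k+1}$ above $p$. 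The key step you should make explicit is that applying the inductive hypothesis along $L'_i$ and along $L'_j$ produces the \emph{same} sum $\sum_\Lambda d_\Lambda\,\mbox{Ind}(\mathcal{F},E_{k+1};\Lambda)$ over the global curves $\Lambda\subset C_{k+1}\cap E^{k+1}_{k+1}$, whence $(-1/\beta)I'_i(C)=(-1/\rho)I'_j(C)$ and Corollary~\ref{cor:indicesymultiplicidad} closes the argument.
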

\begin{proof} We do induction on $N-k$ as usual. If $k=N$ we are done, by the local expression at simple points. Assume that $k<N$.   We suppose without loss of generality that $p\in Y_k$  and thus the next blow-up  $\pi_{k+1}$ is centered at the point $p$. Since $C$ is complete at $p$, the blow-up is non-dicritical. Put

$$
\Gamma'=E^{k+1}_{i}\cap E^{k+1}_j,\;  L'_i=E^{k+1}_{k+1}\cap E^{k+1}_i, \; L'_j=E^{k+1}_{k+1}\cap E^{k+1}_j
$$ and let $p'=\Gamma'\cap E^{k+1}_{k+1}$. Denote as usual by $\gamma'$ the strict transform of the germ of curve $\gamma$ and put
\begin{eqnarray*}
&I'_u(C)=\sum_{q\in L'_u}\sum_{\gamma\in {\mathcal B}_u^k(C;p)}(\gamma',L'_u)_q \mbox{\rm Ind}({\mathcal F},E^k_u;\gamma),\\
&I''_u(C)= \sum_{\gamma\in {\mathcal B}_u^k(C;p)}(\gamma',\Gamma')_{p'} \mbox{\rm Ind}({\mathcal F},E^k_u;\gamma)
\end{eqnarray*}
for $u\in\{i,j\}$.
We have that
$$
\sum_{\gamma\in{\mathcal B}_u^k(C;p)}(\gamma,\Gamma)_p\mbox{\rm Ind}({\mathcal F},E^k_u;\gamma)=I'_u(C)+I''_u(C),\;u\in\{i,j\}
$$
and by induction hypothesis we know that $I''_i(C)=-\alpha I''_j(C)$. Let us denote
$$
\beta=\mbox{\rm Ind}({\mathcal F},E^{k+1}_i;L'_i);\quad  \rho=\mbox{\rm Ind}({\mathcal F},E^{k+1}_j;L'_j).
$$
Also by induction hypothesis, we have
$$
(-1/\beta)I'_i(C)=\sum_{\Lambda}d_\Lambda \mbox{\rm Ind}({\mathcal F},E^{k+1}_{k+1};\Lambda)= (-1/\rho)I'_j(C),
$$
where $\Lambda$ stands for the global irreducible curves $\Lambda \subset E^{k+1}_{k+1}$ such that $\Lambda\subset C_{k+1}$. Applying Corollary \ref{cor:indicesymultiplicidad}, we deduce that
$$
I'_i(C)=({\beta}/{\rho}) I'_j(C)=-\alpha I'_j(C)
$$
and we are done.
\end{proof}

\section{Indices of partial separatrices}
Consider a partial separatrix $C$. Here we show that it is possible to define the index $\mbox{\rm Ind}(C;E_i)$ relative to any invariant compact component $E_i$ of $E$. Given an invariant compact component $E_i$ of $E$, we denote by
$
{\mathcal B}_iC
$ the set of global irreducible curves in $C\cap E_i$.  We put $\mbox{\rm Ind}(C;E_i)=0$ if there is no compact curve of $C$ contained in $E_i$. Otherwise, we shall put
$$
\mbox{\rm Ind}(C;E_i)=\mbox{\rm Ind}({\mathcal F}, E_i;\Gamma)
$$
where $\Gamma\in {\mathcal B}_iC$. Proposition \ref{pro:indexpartialseparatriz} assures that the definition is consistent.
\begin{proposition}
 \label{pro:indexpartialseparatriz}
 Let $C$ be a partial separatrix and $E_i$ a compact invariant component of $E$. If $\Gamma_1,\Gamma_2\in {\mathcal B}_iC$, we have
$
\mbox{\rm Ind}({\mathcal F}, E_i;\Gamma_1)=\mbox{\rm Ind}({\mathcal F}, E_i;\Gamma_2)
$.
\end{proposition}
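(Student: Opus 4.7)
The plan is to exploit the connectedness of $C$ as a union of trace curves, to propagate the index between neighboring components via the transition identity of Corollary~\ref{cor:transitioncompleta}, and to prove that the resulting global product equals~$1$.

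First I organize $C$ as a graph whose vertices are the compact trace curves of $C$ and whose edges record their pairwise intersections. From the local normal forms of simple singularities of $\pi^{*}\mathcal{F}$, two distinct trace curves can only meet at a point of dimensional type three, and at such a point the two trace curves through it lie in two \emph{different} invariant components of $E$ (in the local model they are the coordinate lines through the origin, sitting one in each of the planes $(x=0),(y=0)$). Connectedness of $C$ then yields a chain
\begin{equation*}
\Gamma_1=\Lambda_0,\;\Lambda_1,\;\ldots,\;\Lambda_n=\Gamma_2,\qquad \Lambda_s\subset E_{i_s},\; i_0=i_n=i,
\end{equation*}
with $\Lambda_s\cap\Lambda_{s+1}=\{p_s\}$ and each $p_s$ lying on an irreducible corner curve $\Gamma^{s,s+1}\subset E_{i_s}\cap E_{i_{s+1}}$.

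At the final step $k=N$ every point is complete for $C$, so Corollary~\ref{cor:transitioncompleta} applies at each $p_s$. The intersection multiplicities at a dimensional-type-three point all equal~$1$, and the sums on either side of the corollary reduce to the single trace curve of $C$ through $p_s$ in each component. The corollary therefore collapses to
\begin{equation*}
\mathrm{Ind}(\mathcal{F},E_{i_s};\Lambda_s) = -\,\mathrm{Ind}(\mathcal{F},E_{i_s};\Gamma^{s,s+1})\,\mathrm{Ind}(\mathcal{F},E_{i_{s+1}};\Lambda_{s+1}).
\end{equation*}
Iterating along the chain gives
\begin{equation*}
\mathrm{Ind}(\mathcal{F},E_i;\Gamma_1) = (-1)^n\Bigl(\prod_{s=0}^{n-1}\mathrm{Ind}(\mathcal{F},E_{i_s};\Gamma^{s,s+1})\Bigr)\,\mathrm{Ind}(\mathcal{F},E_i;\Gamma_2),
\end{equation*}
so it remains to show that the displayed prefactor equals~$1$.

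Any intermediate return of the chain to $E_i$ can be removed by splitting the chain at that vertex and inducting on~$n$, so I may assume the chain visits $E_i$ only at its endpoints. The simplest nontrivial case is a bridge $\Gamma_1,\Lambda,\Gamma_2$ with $\Lambda\subset E_j$ meeting $E_i$ at two corner curves $\Gamma^{0,1},\Gamma^{1,2}\subset E_i\cap E_j$; using Remark~\ref{rk:productoindices}, the desired equality reduces to showing that $\Gamma^{0,1}$ and $\Gamma^{1,2}$ share their index relative to $E_j$. I would deduce this by summing part~(b) of Proposition~\ref{pro:indicesymultiplicidad} over all intersection points of $\Lambda$ with the corner curves of $E_i\cap E_j$, grouping the contributions by corner, and invoking the compactness of $\Lambda$ together with the global Camacho--Sad identity it satisfies. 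For longer chains the same principle is applied iteratively, with Corollary~\ref{cor:indicesymultiplicidad} handling triple-intersection transitions. The principal obstacle is precisely this cycle-closure step through distinct corners of $E_i\cap E_j$: it is the only situation in which the product-equals-one conclusion does not follow immediately from Remark~\ref{rk:productoindices}, and it is where the quantitative control provided by Proposition~\ref{pro:indicesymultiplicidad} and Corollary~\ref{cor:indicesymultiplicidad} becomes essential.
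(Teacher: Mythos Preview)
Your transition formula along a chain of trace curves is set up correctly, and the reduction to showing that the cycle prefactor equals $1$ is exactly the content of the paper's Lemma~\ref{lema:cadenacircular}. However, the way you propose to close the cycle has a real gap, and the case you single out as the ``principal obstacle'' is in fact a non-issue.

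First, the non-issue. In the special relatively isolated setting each compact invariant component arises from blowing up a point, so for any two compact invariant components $E_i,E_j$ the intersection $E_i\cap E_j$ is either empty or a \emph{single} irreducible curve (the paper states this explicitly just before Lemma~\ref{lema:cadenacircular}). Hence in your ``bridge'' case $\Gamma^{0,1}=\Gamma^{1,2}$ automatically, and Remark~\ref{rk:productoindices} gives the product equal to $1$ with no further work; the argument via summing Proposition~\ref{pro:indicesymultiplicidad}(b) and a global Camacho--Sad identity on $\Lambda$ is not needed and would not be straightforward to carry out as stated.

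The genuine gap is in handling cycles of length $\geq 3$ in the dual graph ${\mathcal G}_N$ of compact invariant components, for instance $E_i\to E_j\to E_\ell\to E_i$ with $i,j,\ell$ distinct. Your sentence ``the same principle is applied iteratively, with Corollary~\ref{cor:indicesymultiplicidad} handling triple-intersection transitions'' does not supply an argument: Corollary~\ref{cor:indicesymultiplicidad} applies only at a common point $p\in E_i\cap E_j\cap E_\ell$, and there is no reason for such a point to exist along an arbitrary cycle. The paper's proof bypasses this by exploiting the hierarchical way ${\mathcal G}_N$ is built via blow-ups: the last-created vertex $v$ has at most three neighbours, and those neighbours are already pairwise adjacent, so any cycle through $v$ can be shortened by replacing a two-step passage through $v$ with the existing edge between its neighbours, the index relation $\beta=-\alpha\rho$ of Corollary~\ref{cor:indicesymultiplicidad} guaranteeing that the prefactor is unchanged. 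Inducting on the number of vertices (Lemma~\ref{lema:cadenacircular}) then kills every cycle. Your plan does not invoke this inductive structure, and without it the cycle-closure step is unproven.
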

Before giving the proof of Proposition \ref{pro:indexpartialseparatriz}, let us introduce the {\em dual graph ${\mathcal G}_N$ of the compact invariant components}. This graph has vertices corresponding to the compact invariant components;  two vertices $E_i, E_j$ are joined by a wedge if and only if $E_i\cap E_j\ne \emptyset$. It is the last one of the series of dual graphs ${\mathcal G}_k$ of the compact invariant components of $E^k$.

Since each new invariant compact component is produced by the blow-up of a point, we see that given two compact invariant components $E_i$ and $E_j$ we have that either $E_i\cap E_j=\emptyset$ or $E_i\cap E_j$ is an irreducible compact curve.

The graph ${\mathcal G}_{k+1}$ is obtained from ${\mathcal G}_k$ as follows. If the blow-up $\pi_{k+1}$ is dicritical, then ${\mathcal G}_{k+1}={\mathcal G}_k$. If the center of $\pi_{k+1}$ is a curve, we also have that ${\mathcal G}_{k+1}={\mathcal G}_k$. If $\pi_{k+1}$ is non dicritical and the center is a point $p_k$, we have four possibilities:
\begin{enumerate}
\item The point $p_k$ does not belong to any invariant compact component of $E^k$. In this case, the graph ${\mathcal G}_{k+1}$ is obtained from ${\mathcal G}_k$ by adding a new connected component to ${\mathcal G}_k$ consisting in a single vertex that represents the exceptional divisor $E^{k+1}_{k+1}$. No new wedges are added.
\item The point $p_k$ belongs to a single invariant compact component $E^k_i$ of $E^k$. Then ${\mathcal G}_{k+1}$ is obtained from ${\mathcal G}_k$ by adding a new vertex that represents the exceptional divisor $E^{k+1}_{k+1}$ and a new wedge connecting it with $E^{k+1}_i$.
\item The point $p_k$ belongs to exactly two invariant compact components $E^k_i, E^k_j$ of $E^k$. Then ${\mathcal G}_{k+1}$ is obtained from ${\mathcal G}_k$ by adding a new vertex  that represents the exceptional divisor $E^{k+1}_{k+1}$ and two new wedges connecting it with $E^{k+1}_i$ and $E^{k+1}_j$.
    \item The point $p_k$ belongs to three invariant compact components $E^k_i, E^k_j, E^k_\ell$ of $E^k$. Then ${\mathcal G}_{k+1}$ is obtained from ${\mathcal G}_k$ by adding a new vertex that represents the exceptional divisor $E^{k+1}_{k+1}$ and three new wedges connecting it with $E^{k+1}_i$, $E^{k+1}_j$ and $E^{k+1}_\ell$.
\end{enumerate}
A {\em chain } of length $s$ in ${\mathcal G}_N$ is any sequence
\begin{equation}
c=(E_{i_0},w_1, E_{i_1},w_2, E_{i_2},\ldots,w_{s-1},E_{i_{s-1}},w_s, E_{i_s})
\end{equation}
such that $w_{n}=E_{i_{n-1}}\cap E_{i_n}$ is a wedge for $n=1,2,\ldots,s$. If we have another chain
$$
c_1=(E_{i_s},w_{s+1}, E_{i_{s+1}},w_{s+2}, E_{i_{s+2}},\ldots,w_{t-1},E_{i_{t-1}},w_t, E_{i_t})
$$
starting at $E_{i_s}$, we can compose the two chains to obtain
$$
c*c_1=(E_{i_0},w_1, E_{i_1},w_2, E_{i_2},\ldots,w_{t-1},E_{i_{t-1}},w_t, E_{i_t}).
$$

Let us consider a complex number $\mu\ne 0$. The {\em transformed number } $c(\mu)$ of by the chain $c$ is defined as follows. If $s=0$ we put $c(\mu)=\mu$. Put
$$
c=c_{s-1}*(E_{i_{s-1}},w_s, E_{i_s})
$$
where $c_{s-1}$ has length $s-1$.
For
$
\alpha=\mbox{\rm Ind}({\mathcal F}, E_{i_{s}}; w_{s})
$,
we define $$c(\mu)=-\alpha c_{s-1}(\mu).$$
 Let us denote $c^{-1}$ the chain obtained by reversing the order in $c$. By  Remark \ref{rk:productoindices} we have that
\begin{equation}
\label{eq:reversible}
c^{-1}(c(\mu))=\mu;\; c(c^{-1}(\mu))=\mu.
\end{equation}

\begin{lemma}
\label{lema:cadenacircular}
 Consider a (circular) chain
\begin{eqnarray*}
c=(E_{i_0},w_1, E_{i_1},w_2, E_{i_2},\ldots,w_{s-1},E_{i_{s-1}},w_s, E_{i_s})
\end{eqnarray*}
such that $E_{i_0}=E_{i_s}$. For any $\mu\ne 0$ we have
$
c(\mu)=\mu.
$
\end{lemma}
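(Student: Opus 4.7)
The plan is to proceed by induction on $k$, establishing the statement of the lemma for every intermediate dual graph ${\mathcal G}_k$, with $0\leq k\leq N$; the case $k=0$ is vacuous. The crucial point is to show that the identity $c(\mu)=\mu$ for circular chains is preserved when passing from ${\mathcal G}_k$ to ${\mathcal G}_{k+1}$. In the first three modes of the recursive description of ${\mathcal G}_{k+1}$ (namely, $\pi_{k+1}$ dicritical, centered at a germ of curve, or centered at a point of at most one compact invariant component of $E^k$) no new cycle appears in the graph, so the inductive hypothesis for ${\mathcal G}_k$ gives the statement for ${\mathcal G}_{k+1}$ directly.

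The substance of the argument is concentrated in the remaining cases, where the center $p_k$ of $\pi_{k+1}$ belongs to two or three compact invariant components of $E^k$, and the new vertex $E^{k+1}_{k+1}$ is connected by two or three new wedges. The key auxiliary statement is a \emph{triangle identity}: whenever three compact invariant components $E_a,E_b,E_c$ meet at a common point of $M$, with wedges $w_{ab}=E_a\cap E_b$, $w_{bc}=E_b\cap E_c$, $w_{ca}=E_c\cap E_a$, the triangular circular chain $t=(E_a,w_{ab},E_b,w_{bc},E_c,w_{ca},E_a)$ satisfies $t(\mu)=\mu$. I would verify this by unwinding the recursive definition of $t(\mu)$, obtaining $t(\mu)/\mu=-\mbox{\rm Ind}({\mathcal F},E_b;w_{ab})\mbox{\rm Ind}({\mathcal F},E_c;w_{bc})\mbox{\rm Ind}({\mathcal F},E_a;w_{ca})$, then applying Corollary \ref{cor:indicesymultiplicidad} at the common meeting point to rewrite $\mbox{\rm Ind}({\mathcal F},E_a;w_{ca})$ as $-\mbox{\rm Ind}({\mathcal F},E_a;w_{ab})\mbox{\rm Ind}({\mathcal F},E_b;w_{bc})$, and finally using Remark \ref{rk:productoindices} twice to kill the pairs $\mbox{\rm Ind}({\mathcal F},E_a;w_{ab})\mbox{\rm Ind}({\mathcal F},E_b;w_{ab})=1$ and $\mbox{\rm Ind}({\mathcal F},E_b;w_{bc})\mbox{\rm Ind}({\mathcal F},E_c;w_{bc})=1$.

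Granted the triangle identity, I would close the inductive step by systematically eliminating the new vertex $E^{k+1}_{k+1}$ from any circular chain $c$ in ${\mathcal G}_{k+1}$. Each passage of $c$ through $E^{k+1}_{k+1}$ is an excursion $\ldots,E_x,w_{x,k+1},E^{k+1}_{k+1},w_{k+1,y},E_y,\ldots$ with $x,y$ chosen among the compact invariant components through $p_k$. When $x=y$ the excursion cancels at once by the reversibility formula (\ref{eq:reversible}); when $x\neq y$, the triangle identity applied to $(E_x,E_y,E^{k+1}_{k+1})$, whose three members meet at the point where the strict transform of $E^k_x\cap E^k_y$ crosses $E^{k+1}_{k+1}$, converts the excursion into the single segment $E_x,w_{xy},E_y$ without altering $c(\mu)$. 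The direct wedge $w_{xy}$ is available in ${\mathcal G}_k$ because $p_k\in E^k_x\cap E^k_y$ forces $E^k_x\cap E^k_y$ to be a non-empty compact curve, and the analogous remark for each pair drawn from the three components in case 5 is immediate from normal crossings at $p_k$. Finitely many such replacements rewrite $c$ as a circular chain in ${\mathcal G}_k$ with the same transformed number, and the induction hypothesis concludes. The main delicacy I foresee is the bookkeeping when $c$ visits $E^{k+1}_{k+1}$ many times, and the careful verification that each new triangle created by $\pi_{k+1}$ really does admit the common point required to apply Corollary \ref{cor:indicesymultiplicidad}.
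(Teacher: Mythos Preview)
Your proposal is correct and follows essentially the same route as the paper: both argue by induction on the construction of ${\mathcal G}_N$, remove the most recently created vertex $v=E^{k+1}_{k+1}$ from a circular chain by replacing each two-step excursion through $v$ by the direct wedge between its neighbours, and justify that replacement via the relation $\beta=-\alpha\rho$ of Corollary~\ref{cor:indicesymultiplicidad} (your ``triangle identity'' is exactly this relation combined with Remark~\ref{rk:productoindices}). The only cosmetic difference is that the paper also keeps track of the length of $c$ as a secondary inductive parameter, whereas you absorb that into ``finitely many such replacements''; note in passing that your dismissal of the degree-one case as ``no new cycle'' tacitly relies on the same back-and-forth cancellation via~(\ref{eq:reversible}) that you later invoke for $x=y$, which the paper treats as its case~(1).
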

\begin{proof} In view of Equation \ref{eq:reversible} the result is true if and only if it is true for one of the shifted chains
$$
c_j=(E_{i_j},w_j, E_{i_{j+1}},w_{j+1}, E_{i_{j+2}},\ldots,w_{s},E_{i_{s}}=E_{i_0},w_1, E_{i_1},\ldots,E_{i_{j-1}},w_{j-1}, E_{i_j}).
$$
We do induction on the number of vertices of the graph ${\mathcal G}_N$ and the length of $c$. If we have only one vertex, we are done. Let $v$ be the last vertex incorporated to the construction of ${\mathcal G}$. If this vertex $v$ does not appear in $c$, we are done by induction. Assume that $v$ appears in $c$.

If $v$ is a connected component of $\mathcal G$, we have only one vertex in $c$ and we are done. If $v$ is not isolated, we have three possibilities:
\begin{enumerate}
\item $v$ is connected with exactly one vertex $v_1$ with a wedge $w'_1$.
\item $v$ is connected with two vertices $v_1,v_2$ by means of respective wedges $w'_1,w'_2$. In this case $v_1$ and $v_2$ are connected by wedge $\tilde w_{12}$.
\item     $v$ is connected with three vertices $v_1,v_2,v_3$ by means of respective wedges $w'_1,w'_2,w'_3$. In this case $v_1,v_2,v_3$ are connected two by two by  wedges $\tilde w_{12},\tilde w_{13},\tilde w_{23}$.
\end{enumerate}

In case (1), up to performing a shift of $c$, we may assume that $c$ has the form
$$
c=(v,w'_1,v_1,w_2,\ldots,w_{s-1},v_1,w'_1,v)
$$
and we are done by induction applied to $c'=(v_1,w_2,\ldots,w_{s-1},v_1)$ as follows. Let $\alpha= \mbox{\rm Ind}({\mathcal F},v;w'_1)$, put
$
\mu'=-\alpha\mu
$, then we have
$$
c(\mu)=(-1/\alpha)c'(\mu')=(-1/\alpha)\mu'=\mu.
$$
In case (2), up to interchanging the role of $v_1$ and $v_2$ the appearance of $v$ may be in one of the following two forms,
\begin{eqnarray*}
c&=&c_1*(v_1,w'_1,v,w'_1,v_1)*c_2, \\
c&=& c_1*(v_1,w'_1,v,w'_2,v_2)*c_2.
\end{eqnarray*}
The first one is treated as in the previous case. Assume we have the second one. Let us denote
$$
\alpha=\mbox{\rm Ind}({\mathcal F}, v_1;\tilde w_{12});\;\beta=\mbox{\rm Ind}({\mathcal F}, v_1;\tilde w'_1);\;
\rho =\mbox{\rm Ind}({\mathcal F}, v_2;\tilde w'_1).
$$
We know that $\beta=-\alpha\rho$. Consider the circular chain
$$
\tilde c=c_1*(v_1,\tilde w_{12},v_2)*c_2.
$$
In view of the fact that
$$
(v_1,\tilde w_{12},v_2)(\tilde\mu)=-\alpha\tilde\mu=(-\beta)(-1/\rho)\tilde\mu= (v_1,w'_1,v,w'_2,v_2)(\tilde \mu),
$$
we deduce that $\tilde c(\mu)=c(\mu)$ and we are done since by induction we have $\tilde c(\mu)=\mu$.

Case (3) is treated as the previous one.
\end{proof}
Now we go to the proof of Proposition \ref{pro:indexpartialseparatriz}. Since the compact part of the partial separatrix $C$ is connected, we can join a generic point $p_1$ in $\Gamma_1$ with a generic point $p_2$ in $\Gamma_2$ by a real path $\gamma$. Moreover $\gamma$ may be chosen in such a way that it produces only finitely many changes of irreducible curves in $C$. The connected change of (trace) irreducible curves of $C$ gives a transition
 of invariant compact component of the divisor. In this way, we obtain a circular chain
$$
c=(E_{i_0},w_1, E_{i_1},w_2, E_{i_2},\ldots,w_{s-1},E_{i_{s-1}},w_s, E_{i_s}=E_{i_0})
$$
such that if $\mu$ is the index for $\Gamma_1$ then $c(\mu)$ is the index for $\Gamma_2$. By Lemma \ref{lema:cadenacircular} we have that $c(\mu)=\mu$ and the proof is ended.
\begin{remark} Corollary \ref{cor:transitioncompleta} may now be reformulated by stating that
$$
\left(\sum_{\gamma\in{\mathcal B}_iC}(\gamma,\Gamma)_p\right)\mbox{\rm Ind}(C;E_i)=-\alpha \left(\sum_{\eta\in{\mathcal B}_jC}(\eta,\Gamma)_p\right)\mbox{\rm Ind}(C;E_j).
$$
\end{remark}

\section{Real saddles at incomplete points}

Here we give a result relating incomplete points and real saddle curves. This is a  key point in the proof of Theorem \ref{teo:mainteo}.

\begin{proposition}
\label{pro:notallrealsaddles}
Let $p$ be an incomplete point belonging to a compact invariant component $E^k_i$.  There is a compact curve $\Gamma\subset\mbox{\rm Sing}{\mathcal F}_k$ with $\Gamma\subset E^k_i$ such that $\Gamma$ is not a real saddle.
\end{proposition}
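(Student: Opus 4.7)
The plan is to argue by induction on $N-k$ combined with a proof by contradiction, exploiting the transition formulas of Proposition \ref{pro:indicesymultiplicidad} and the Camacho-Sad identity on $\mathbb{P}^2$. The base case $k=N$ is vacuous: every point of $F_N$ is complete. For the inductive step, if $p \notin Y_k$ the conclusion descends to level $k+1$ via strict transform, since compact singular curves and their Camacho-Sad indices (computed at generic points) correspond bijectively. If $Y_k$ is a germ of nonsingular curve through $p$, Remark \ref{rk:completitudequirreduction} forces $p$ to be complete, contradicting the hypothesis. Hence I may assume $Y_k = \{p\}$, and suppose for contradiction that every compact singular curve $\Gamma \subset E^k_i$ satisfies $\mbox{\rm Ind}(\mathcal{F}, E_i; \Gamma) \in \mathbb{R}_{<0}$; by preservation of indices under strict transform, the same holds for every strict transform of such $\Gamma$ in $E^{k+1}_i$.

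When $\pi_{k+1}$ is a dicritical blow-up, the contradiction is immediate. Set $L = E^{k+1}_i \cap E^{k+1}_{k+1}$, a projective line of self-intersection $+1$ in the projective plane $E^{k+1}_{k+1}$, and let $\mathcal{G} = \mathcal{F}_{k+1}|_{E^{k+1}_{k+1}}$. Applying Proposition \ref{pro:indicesymultiplicidad}(a) pointwise on $L$ and summing, together with the Camacho-Sad identity $\sum_{q \in L}\mbox{\rm Ind}(\mathcal{G}, L; q) = L \cdot L = 1$, yields
\begin{equation*}
 1 \;=\; \sum_{\Gamma'}(\Gamma'\cdot L)_{E^{k+1}_i}\,\mbox{\rm Ind}(\mathcal{F}, E_i; \Gamma'),
\end{equation*}
where $\Gamma'$ ranges over compact singular curves in $E^{k+1}_i$ meeting $L$. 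Each such $\Gamma'$ is a strict transform of a compact singular curve of $E^k_i$ through $p$, hence has real-negative index; the right-hand side is then non-positive real, contradicting the value $1$.

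When $\pi_{k+1}$ is non-dicritical, $E^{k+1}_{k+1}$ is invariant and Remark \ref{rk:completoestable} provides an incomplete point $p' \in C_{k+1} \cap \pi_{k+1}^{-1}(p) \subset E^{k+1}_{k+1}$. If $p' \in L = E^{k+1}_i \cap E^{k+1}_{k+1}$, induction applied to $(p', E^{k+1}_i)$ furnishes a non-real-saddle compact singular curve in $E^{k+1}_i$ which, if distinct from $L$, is a strict transform descending to $E^k_i$ with the same non-real-saddle index. Otherwise I iterate the non-dicritical subcase along the chain of incomplete points $p=p_k, p_{k+1}, \ldots$ until the first dicritical blow-up $\pi_{k^*}$ appears, apply the dicritical argument at that step to an invariant component of $E^{k^*}$ containing $p_{k^*-1}$, and propagate the resulting non-real-saddle contribution back across the intervening non-dicritical corners using Proposition \ref{pro:indicesymultiplicidad}(b) combined with Corollary \ref{cor:indicesymultiplicidad} ($\beta = -\alpha\rho$). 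The hard part will be managing this last propagation: one must argue that the non-real-saddle contribution discovered at the dicritical step can always be routed back through invariant corner transitions to a compact singular curve of $E^k_i$ rather than being confined to a new exceptional component; closure of the real-saddle property under the triple-point relation, together with preservation of indices under strict transform, is the key technical tool for guaranteeing sign consistency along the entire chain.
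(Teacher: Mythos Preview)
Your setup, base case, reduction to $Y_k=\{p\}$, and dicritical case are all correct and match the paper's argument.

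The non-dicritical case, however, has a genuine gap. You apply the inductive hypothesis to $(p',E^{k+1}_i)$ and obtain a non-real-saddle curve in $E^{k+1}_i$, noting it is useful ``if distinct from $L$''. But you never rule out that this curve \emph{is} $L$ itself, and your fallback---iterating along a chain of incomplete points until a dicritical blow-up appears, then ``propagating back''---is not an induction step; it is an informal sketch that abandons the inductive structure. The closing paragraph essentially concedes this (``the hard part will be managing this last propagation''). As written, the argument does not terminate.

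The missing ingredient is a direct computation showing that $L=E^{k+1}_i\cap E^{k+1}_{k+1}$ is itself a real saddle. Take a generic plane section $\Delta$ at $p$; Camacho--Sad on the blown-up section gives $\sum_{q}\mbox{Ind}(\tilde{\mathcal G},\tilde\Delta\cap E^{k+1}_{k+1};q)=-1$, and Proposition~\ref{pro:indicesymultiplicidad}(b) along $L$ rewrites this as $-1=\tfrac{1}{\alpha}-\tfrac{r}{\alpha}$ where $\alpha=\mbox{Ind}(\mathcal F,E_i;L)$ and $r=\sum_{\gamma\in\mathcal B^k_i(p)}(\gamma,\Gamma)\,\mbox{Ind}(\mathcal F,E_i;\gamma)<0$ by the contradiction hypothesis. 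Hence $\alpha=r-1<0$: $L$ is a real saddle. Now induction on $E^{k+1}_i$ at any incomplete point of $L$ would return a non-real-saddle \emph{distinct from} $L$, which projects and gives the contradiction; so every point of $L$ is complete. Since $p$ is incomplete and $\pi_{k+1}$ is non-dicritical, there is an incomplete point $q\in E^{k+1}_{k+1}\setminus L$. Apply induction to $(q,E^{k+1}_{k+1})$ to get a non-real-saddle $\Theta\subset E^{k+1}_{k+1}$, then transfer $\Theta$ across $L$ at a (complete) point of $\Theta\cap L$ using Corollary~\ref{cor:transitioncompleta} (trace case) or Corollary~\ref{cor:indicesymultiplicidad} (corner case) to obtain a non-real-saddle $\tilde\Theta\subset E^{k+1}_i$ with $\tilde\Theta\ne L$. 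Its projection to $E^k_i$ is the desired curve.
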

\begin{proof} As usual we do induction on  $N-k$. If $k=N$ there is nothing to prove, since $p$ is a complete point. Assume that $k<N$. We assume without loss of generality that $p\in Y_k$. Moreover, since $p$ is an incomplete point, we necessarily have that $Y_k=\{p\}$ in view of Remark \ref{rk:completitudequirreduction}. Now, it is enough to find $\gamma\in {\mathcal B}^k_i(p)$ such that
$$
\mbox{\rm Ind}({\mathcal F},E_{i};\gamma)\notin {\mathbb R}_{<0}.
$$
 We assume by contradiction that all $\gamma\in {\mathcal B}^k_i(p)$ are real saddle curves.

{\em First case: $\pi_{k+1}$ is a dicritical blow-up}. We apply Proposition \ref{pro:indicesymultiplicidad} to the dicritical component $E^{k+1}_{k+1}$ to see that
\begin{equation}
\label{eq:dicincompleto}
\sum_{q\in L}\sum_{\gamma\in {\mathcal B}^k_i(p)}(\gamma',L)_q \mbox{\rm Ind}({\mathcal F},E_i;\gamma)=\sum_{q\in L}
\mbox{\rm Ind}({\mathcal F}\vert_{E^{k+1}_{k+1}},L;q)
\end{equation}
where $L=E^{k+1}_{k+1}\cap E^{k+1}_i$. The left hand side of Equation \ref{eq:dicincompleto} is a negative number but the right hand side coincides with the self-intersection of $L$ in $E^{k+1}_{k+1}$, that is, it has the value $+1$. This is the desired contradiction.

{\em Second case: $\pi_{k+1}$ is a non dicritical blow-up}. Put $L=E^{k+1}_{k+1}\cap E^{k+1}_i$ as before and
$
\alpha= \mbox{\rm Ind}({\mathcal F},E_i;L)
$. Let us consider a generic plane $\Delta$ at $p$ and ${\mathcal G}={\mathcal F}_k\vert_\Delta$. The blow-up $\pi_{k+1}$ induces a blow-up
$
\tilde\Delta\rightarrow \Delta
$
and the transform of $\mathcal G$ by this blow-up is $\tilde{\mathcal G}={\mathcal F}_{k+1}\vert_{\tilde\Delta}$. By the properties of the indices of Camacho-Sad we have
$$
\sum_{q\in \tilde\Delta\cap E^{k+1}_{k+1}} \mbox{\rm Ind}(\tilde{\mathcal G},\tilde\Delta\cap E^{k+1}_{k+1};q)=-1.
$$
Moreover, we have
$$
\sum_{q\in \tilde\Delta\cap E^{k+1}_{k+1}} \mbox{\rm Ind}(\tilde{\mathcal G},\tilde\Delta\cap E^{k+1}_{k+1};q)=
\mbox{\rm Ind}({\mathcal F}, E_{k+1};L)+
 \sum_{\Lambda\subset E^{k+1}_{k+1},\Lambda \ne L} d_{\Lambda}\mbox{\rm Ind}({\mathcal F},E_{k+1};\Lambda).
$$
We know that
$
\mbox{\rm Ind}({\mathcal F}, E_{k+1};L)= 1/\alpha
$
and by Proposition \ref{pro:indicesymultiplicidad} we have
$$
\sum_{\Lambda\subset E^{k+1}_{k+1},\Lambda \ne L} d_{\Lambda}\mbox{\rm Ind}({\mathcal F},E_{k+1};\Lambda)=
-\frac{1}{\alpha}\left\{
\sum_{q\in E^{k+1}_{k+1}}\sum_{\gamma\in {\mathcal B}^k_i(p)}(\gamma,L)_q \mbox{\rm Ind}({\mathcal F},E_i;\gamma)
\right\}.
$$
That is
$$
-1= \frac{1}{\alpha}-\frac{r}{\alpha}
$$
with $r<0$. Thus $\alpha=(r-1)$ is a negative real number and  $L$ is a real saddle. By induction hypothesis, all the points in $L$ must be complete, since otherwise the non real saddle in $E^{k+1}_i$ is not $L$ and projects to a non real saddle in $E^k_i$. Moreover, since the blow-up is non dicritical, there is at least one incomplete point $q\in E^{k+1}_{k+1}$. Let $\Theta$ be a curve through $q$ that is not a real saddle and consider a point $p'\in \Theta\cap L$. If $\Theta$ is a trace curve, by the transition of indices at complete points given in Corollary \ref{cor:transitioncompleta} we deduce the existence of a trace curve $\tilde\Theta\subset E^{k+1}_i$ such that  $\tilde\Theta\subset E^{k+1}_i$ is not a real saddle. If $\Theta$ is contained in the intersection of two divisors, by  Corollary \ref{cor:indicesymultiplicidad}  we find a non real saddle $\tilde\Theta\subset E^{k+1}_i$ with $\tilde\Theta\not\subset E^{k+1}_{k+1}$. The projection of $\tilde\Theta$ gives the desired contradiction.
\end{proof}

\section{Uninterrupted Nodal Components}
Let us recall the notion of uninterrupted nodal component introduced in \cite{Can-R-S}. By definiton, an {\em uninterrupted nodal component} of ${\mathcal F}_N, E^N$ is a connected union $\mathcal N$ of irreducible curves  $\Gamma\subset \mbox{\rm Sing}{\mathcal F}_N$ satisfying the following conditions:
\begin{enumerate}
\item Each $\Gamma\subset {\mathcal N}$ is a {\em nodal} curve (see Definition \ref{def:nodalsaddle}).
\item The component ${\mathcal N}$ is {\em uninterrupted} in the sense that there are exactly two curves $\Gamma_1$ and $\Gamma_2$ in ${\mathcal N}$ through any point $p\in {\mathcal N}$ of dimensional type three.
\end{enumerate}
Recall that an uninterrupted nodal component $\mathcal N$  is {\em incomplete} if and only if it intersects at least one compact dicritical component of the exceptional divisor $E^N$.
Otherwise, we say that $\mathcal N$ is {\em complete}.

The next result shows the compatibility between the uninterrupted nodal components and the partial separatrices, in the last step of the reduction of singularities.

\begin{proposition}[Global  trace transitions]
\label{pro:globaltransitions} Let $\mathcal N$ be a uninterrupted nodal component. Consider
 a partial separatrix $C$ and a compact invariant component $E_i$ of the exceptional divisor $E$.
If there is $\Gamma_0\in {\mathcal B}_iC$ with $\Gamma_0\subset {\mathcal N}$ then $\Gamma\subset {\mathcal N}$ for any $\Gamma\in {\mathcal B}_iC$.
\end{proposition}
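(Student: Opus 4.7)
The argument has two main stages: \emph{nodality} and \emph{connectivity}. For nodality, apply Proposition~\ref{pro:indexpartialseparatriz}: every curve in ${\mathcal B}_iC$ has the common index $\mbox{\rm Ind}(C;E_i)=\mbox{\rm Ind}({\mathcal F},E_i;\Gamma_0)$. Since $\Gamma_0\subset{\mathcal N}$ is nodal, this common value lies in ${\mathbb R}_{>0}\setminus{\mathbb Q}$, so every $\Gamma\in{\mathcal B}_iC$ is nodal. It only remains to show that any such $\Gamma$ belongs to the specific uninterrupted nodal component $\mathcal N$ containing $\Gamma_0$.

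For connectivity, fix $\Gamma\in{\mathcal B}_iC$. Since the compact part of the partial separatrix $C$ is connected, we can link $\Gamma_0$ to $\Gamma$ by a finite chain of trace curves $T_0=\Gamma_0,T_1,\ldots,T_n=\Gamma$ in $C$, with $T_{\ell-1}\cap T_\ell=\{p_{\ell-1}\}$ a type-3 point of $C$ for $\ell=1,\ldots,n$. From the local description of type-3 points on a partial separatrix (Section~\ref{sec:partialseparatrices}), exactly two invariant components $E_{i_{\ell-1}},E_{i_\ell}$ of $E$ pass through $p_{\ell-1}$, and the three singular curves there are $T_{\ell-1}, T_\ell$ and the corner curve $K_{\ell-1}=E_{i_{\ell-1}}\cap E_{i_\ell}$, which is a single globally irreducible compact curve.

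I track $\mathcal N$ inductively along this chain. At each type-3 point $p_{\ell-1}$ reached by $\mathcal N$, the uninterrupted property selects exactly two of $\{T_{\ell-1},T_\ell,K_{\ell-1}\}$ as lying in $\mathcal N$, namely the two nodal ones (the third being a real saddle). In the favorable situation, $T_{\ell-1}$ and $T_\ell$ are both nodal; then they are necessarily the two curves in $\mathcal N$ at $p_{\ell-1}$ and the induction advances from $T_{\ell-1}$ to $T_\ell$. In the remaining situation, the other nodal curve at $p_{\ell-1}$ is the corner $K_{\ell-1}$; here $\mathcal N$ diverts from the path of $C$ and follows the global irreducible curve $K_{\ell-1}=E_{i_{\ell-1}}\cap E_{i_\ell}$. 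The partial-separatrix index transition at $p_{\ell-1}$ (reformulated Corollary~\ref{cor:transitioncompleta}) together with the Camacho--Sad relations of Proposition~\ref{pro:indicesymultiplicidad} and Corollary~\ref{cor:indicesymultiplicidad} are used to constrain the global walk of $\mathcal N$ along $K_{\ell-1}$ so that it must eventually re-enter a trace curve of $C$ further along the chain, restoring the inductive grip.

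The principal obstacle is precisely this corner-diversion case: controlling $\mathcal N$'s global walk once it leaves the trace curves of $C$ and showing it must return to the chain in time to contain $T_n=\Gamma$. The argument closes at $p_{n-1}$: the nodality of $T_n=\Gamma$ from the first stage together with the uninterrupted property at $p_{n-1}$ identify $\Gamma$ as one of the two nodal curves of $\mathcal N$ through $p_{n-1}$, yielding $\Gamma\subset\mathcal N$.
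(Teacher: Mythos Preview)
Your nodality stage is correct: Proposition~\ref{pro:indexpartialseparatriz} forces every $\Gamma\in{\mathcal B}_iC$ to share the index of $\Gamma_0$, hence to be nodal. But this is not the hard part, and the paper does not use it.

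The gap is in your connectivity stage, precisely at the corner-diversion case you flag as the principal obstacle. When $T_{\ell-1}\subset{\mathcal N}$ and $K_{\ell-1}\subset{\mathcal N}$, you have $T_\ell\not\subset{\mathcal N}$ and your induction hypothesis ``$T_{\ell-1}\subset{\mathcal N}$'' fails for the next step. You propose to follow ${\mathcal N}$ along the global curve $K_{\ell-1}=E_{i_{\ell-1}}\cap E_{i_\ell}$ and claim that the index relations of Proposition~\ref{pro:indicesymultiplicidad} and Corollaries~\ref{cor:indicesymultiplicidad}--\ref{cor:transitioncompleta} force it to re-enter the chain $T_0,\dots,T_n$. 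This is not argued, and in fact there is no reason the other type-3 points on $K_{\ell-1}$ should lie on your chosen chain at all; the branches of ${\mathcal N}$ leaving $K_{\ell-1}$ can wander through components of $E$ that the partial separatrix $C$ never visits. Your closing step at $p_{n-1}$ then presupposes $p_{n-1}\in{\mathcal N}$, which is exactly what is in doubt.

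The paper's device is to abandon the hypothesis $T_{\ell-1}\subset{\mathcal N}$ and track instead the \emph{truth value} of ``$T_\ell\subset{\mathcal N}$'' along the whole chain. The uninterrupted property at $p_{\ell-1}$ gives four rules, covering also the cases $T_{\ell-1}\not\subset{\mathcal N}$: one checks that $T_\ell\subset{\mathcal N}$ if and only if exactly one of $T_{\ell-1},K_{\ell-1}$ lies in ${\mathcal N}$. Writing $\epsilon(K_j)=-1$ when $K_j\subset{\mathcal N}$ and $+1$ otherwise, this says $T_n\subset{\mathcal N}$ iff $\prod_{j}\epsilon(K_j)=1$. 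Since $T_0,T_n\in{\mathcal B}_iC$, the chain of components $E_i=E_{i_0},E_{i_1},\dots,E_{i_n}=E_i$ is circular in the dual graph ${\mathcal G}_N$, and one proves $\prod_j\epsilon(K_j)=1$ by the same vertex-peeling reduction as in Lemma~\ref{lema:cadenacircular}. No excursion of ${\mathcal N}$ off the chain is ever followed.
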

\begin{proof}  The proof is similar to the proof of Proposition \ref{pro:indexpartialseparatriz}. Let us consider the dual graph ${\mathcal G}_N$ as in Proposition \ref{pro:indexpartialseparatriz}. Take two curves $\Gamma_0,\Gamma_1\in {\mathcal B}_iC$. We can connect $\Gamma_0,\Gamma_1$ by a circular chain
$$
c=(E_i=E_{i_0},w_1,E_{i_1},w_2,E_{i_2},\ldots,w_{s-1},E_{i_{s-1}},w_s,E_{i_s}=E_i)
$$
as in Lemma \ref{lema:cadenacircular}. Now, let us recall that at a point of dimensional type three we have either  no curves of ${\mathcal N}$ or exactly two of them. In this way, we have the following rule of behavior for the curves $\Gamma_{i_j}\subset E_{i_j}\cap C$ that we are considering in the chain $c$:
\begin{enumerate}
\item If $\Gamma_{i_{j-1}}\subset {\mathcal N}$ and $w_j\not\subset {\mathcal N}$, then $\Gamma_{i_{j}}\subset {\mathcal N}$.
\item If $\Gamma_{i_{j-1}}\subset {\mathcal N}$ and $w_j\subset {\mathcal N}$, then $\Gamma_{i_{j}}\not\subset {\mathcal N}$.
\item If $\Gamma_{i_{j-1}}\not\subset {\mathcal N}$ and $w_j\subset {\mathcal N}$, then $\Gamma_{i_{j}}\subset {\mathcal N}$.
\item If $\Gamma_{i_{j-1}}\not\subset {\mathcal N}$ and $w_j\not\subset {\mathcal N}$, then $\Gamma_{i_{j}}\not\subset {\mathcal N}$.
\end{enumerate}
Let us denote $\epsilon(w_{i_j})=-1$ if $w_{i_j}\subset {\mathcal N}$ and $\epsilon(w_{i_j})=1$ otherwise. Now, it is enough to prove that
$$
\epsilon(w_{i_1})\epsilon(w_{i_2})\cdots \epsilon(w_{i_s})=1.
$$
This can be done by the same arguments as in the proof of Lemma \ref{lema:cadenacircular}.
\end{proof}

Now, we consider an intermediate step $(M_k,F_k)$ of the reduction of singularities and we will study the transition properties
of a {\em fixed complete uninterrupted nodal component $\mathcal N$} at this level $k$ (see also \cite{Can-R-S}).
 We put  $${\mathcal N}_k=\rho_k({\mathcal N}).$$
Note that ${\mathcal N}_k\cap F_k$ is either a single point or a finite union of compact curves.
\begin{proposition}[Triple points transitions]
 \label{pro:tripetransition}
 Let $p\in F_k$ be  a point belonging to three compact components $E^k_i,E^k_j$ and $E^k_\ell$ of $E^k$. Assume that $\Gamma_\ell=E^k_i\cap E^k_j\subset {\mathcal N}_k$. Then $E^k_i,E^k_j$ and $E^k_\ell$ are invariant  and
$$
\Gamma_j\subset {\mathcal N}_k \Leftrightarrow \Gamma_i \not\subset {\mathcal N}_k,
$$
where $\Gamma_j=E^k_\ell\cap E^k_i$ and $\Gamma_i=E^k_\ell\cap E^k_j$.
\end{proposition}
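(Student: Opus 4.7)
The plan is to proceed by induction on $N-k$, with the final reduction level $k=N$ as the base case.

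For $k=N$, the point $p$ is a simple point of $\mathcal{F}_N, E^N$. Since $\Gamma_\ell$ is a nodal curve lying in $\mbox{\rm Sing}(\mathcal{F}_N)$, both $E^N_i$ and $E^N_j$ must be invariant (the singular locus at a generic point of $\Gamma_\ell$ is the intersection of two invariant surfaces). If $E^N_\ell$ were dicritical then $p\in E^N_\ell\cap\Gamma_\ell$ would give $\mathcal{N}$ a nonempty intersection with a compact dicritical component, contradicting the completeness of $\mathcal{N}$. Hence all three components are invariant and $p$ is of dimensional type three. The enumeration of simple dimensional-type-three points in the Introduction, together with $\Gamma_\ell$ being nodal, restricts us to case (1) or (2); the uninterrupted condition on $\mathcal{N}$ at $p$ requires a second nodal curve of $\mathcal{N}$ through $p$, excluding case (2) and placing us in case (1), which yields the transition rule.

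For the inductive step ($k<N$), if $p\notin Y_k$ the local picture at $p$ is identical in $M_{k+1}$ and the statement transfers directly by IH. Otherwise $p\in Y_k$; since three divisor components pass through $p$ and $Y_k$ has normal crossings with $E^k$, necessarily $Y_k=\{p\}$. Blowing up $p$, the strict transforms of $E^k_i, E^k_j, E^k_\ell$ meet the new component $E^{k+1}_{k+1}$ in projective lines $L_i, L_j, L_\ell$, and the strict transform $\Gamma_\ell^{(k+1)}$ meets $E^{k+1}_{k+1}$ at $q=L_i\cap L_j$, a triple point of $E^{k+1}_i, E^{k+1}_j, E^{k+1}_{k+1}$. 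Applying IH at $q$ (with $\Gamma_\ell^{(k+1)}\in\mathcal{N}_{k+1}$ in the $\Gamma_\ell$-role) gives invariance of these three components---so $\pi_{k+1}$ is non-dicritical---and the rule at $q$, say $L_i\in\mathcal{N}_{k+1}$ and $L_j\notin\mathcal{N}_{k+1}$. To obtain invariance of $E^k_\ell$, suppose it were dicritical; the triple point $q'=L_i\cap L_\ell$ of $E^{k+1}_i, E^{k+1}_{k+1}, E^{k+1}_\ell$, with $L_i\in\mathcal{N}_{k+1}$ playing the $\Gamma_\ell$-role, would force $E^{k+1}_\ell$ invariant by IH, a contradiction.

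With all three components invariant at level $k$, the transition rule follows from further applications of IH. IH at $q'$ yields that exactly one of $L_\ell$ or $\Gamma_j^{(k+1)}$ lies in $\mathcal{N}_{k+1}$; then IH at $q''=L_j\cap L_\ell$, applied with the relevant nodal curve in the $\Gamma_\ell$-role and using $L_j\notin\mathcal{N}_{k+1}$, determines exactly one of $\Gamma_i^{(k+1)}, \Gamma_j^{(k+1)}$ to lie in $\mathcal{N}_{k+1}$, which gives the rule at level $k$. As a compatibility check, Corollary~\ref{cor:indicesymultiplicidad} produces the index relation $\beta=-\alpha\rho$ with $\alpha\in\mathbb{R}_{>0}\setminus\mathbb{Q}$ (since $\Gamma_\ell$ is nodal), which rules out both $\Gamma_i, \Gamma_j$ being simultaneously nodal or simultaneously real saddles, consistent with the conclusion of the induction.

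\emph{Main obstacle.} The delicate part is the inductive step when $p\in Y_k$: one must apply IH coherently at the three auxiliary triple points $q, q', q''$ on $E^{k+1}_{k+1}$, verifying at each step that the required nodal-curve hypothesis is available. The invariance of $E^k_\ell$ uses $q'$ distinct from $q$, and the final transition rule emerges only after combining the rules extracted at all three triple points with a symmetric case analysis.
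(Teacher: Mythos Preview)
Your proof is correct and follows essentially the same route as the paper's: induction on $N-k$, with the base case handled by the classification of simple dimensional-type-three points and the uninterruptedness of $\mathcal{N}$, and the inductive step carried out by applying the hypothesis at the three auxiliary triple points $q=p'_\ell$, $q'=p'_j$, $q''=p'_i$ on $E^{k+1}_{k+1}$ in the same order and with the same case split on whether $L_\ell\subset\mathcal{N}_{k+1}$. The closing ``compatibility check'' via Corollary~\ref{cor:indicesymultiplicidad} is not needed for the argument (the induction already delivers the conclusion) and does not appear in the paper, but it does no harm.
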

\begin{proof} We do induction on  $N-k$. If $k=N$ we are done by the definition of complete uninterrupted nodal component.

Assume that $k<N$ and $p\in Y_k$ as usual. Since $p$ is in the intersection of three compact components then $Y_k=\{p\}$. Denote
$$
p'_u=E^{k+1}_{k+1}\cap \Gamma'_u,\quad  L'_u= E^{k+1}_{k+1}\cap E^{k+1}_u; \quad u\in\{i,j,\ell\}.
$$

By induction on $p'_\ell$ we have that $E^{k+1}_i, E^{k+1}_j$ and $E^{k+1}_{k+1}$ are invariant. In particular $\pi_{k+1}$ is non dicritical.  We also have that either $L'_i$ or $L'_j$ are contained in ${\mathcal N}_{k+1}$. Now by induction on $p'_j$ or $p'_i$ respectively, we deduce that $E^{k+1}_\ell$ and hence $E^{k}_\ell$ is invariant.

Assume now that $L'_i\subset {\mathcal N}_{k+1}$ and hence $L'_j\not \subset {\mathcal N}_{k+1}$. By induction on $p'_j$ we have two possibilities:
\begin{enumerate}
\item $\Gamma'_j\subset {\mathcal N}_{k+1}$ and $L'_\ell\not\subset {\mathcal N}_{k+1}$. It is not possible to have that $\Gamma'_i\subset {\mathcal N}_{k+1}$ since at $p'_i$ we have the two other corner curves not in ${\mathcal N}_{k+1}$.
\item $\Gamma'_j\not\subset {\mathcal N}_{k+1}$ and $L'_\ell\subset {\mathcal N}_{k+1}$. Then we have that $\Gamma'_i\subset {\mathcal N}_{k+1}$ since $L'_j\not\subset {\mathcal N}_{k+1}$.
\end{enumerate}
We conclude in the same way in the case that $L'_j\subset {\mathcal N}_{k+1}$ and  $L'_i\not \subset {\mathcal N}_{k+1}$.
\end{proof}

\begin{proposition}[Trace transitions]
\label{pro:tracetransitions} Let $C$ be a partial separatrix. Consider a point $p\in C_k$ complete for $C$ and belonging to a compact invariant component $E^k_i$.
Suppose that there is $\gamma\in {\mathcal B}_i^k(C;p)$ with $\gamma\subset {\mathcal N}_k$  and that $p$ belongs to another compact component $E^k_j$.
We have:
\begin{enumerate}
\item  $E^k_j$ is an invariant component.
\item  Put $\Gamma=E^k_i\cap E^k_j$. Then one of the following statements holds:
    \begin{enumerate}
    \item $\Gamma\subset {\mathcal N}_k$ and any $\tilde\gamma\in {\mathcal B}^k_j(C;p)$  is a real saddle.
        \item $\Gamma$ is a real saddle and for any $\tilde\gamma\in {\mathcal B}^k_j(C;p)$ we have $\tilde\gamma\subset {\mathcal N}_k$.
    \end{enumerate}
\end{enumerate}
\end{proposition}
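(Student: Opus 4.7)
The plan is to argue by induction on $N - k$, in the style of Propositions \ref{pro:tripetransition} and \ref{pro:notallrealsaddles}, splitting the work into first establishing that $E^k_j$ is invariant and then applying the quantitative transition of indices.

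For the invariance of $E^k_j$, I first treat the base case $k=N$. Since $\gamma$ is a trace curve of the singular locus through $p$ and since at a simple point of dimensional type two the singular locus reduces to a unique corner curve, $p$ must be of dimensional type three. Suppose for contradiction that $E^N_j$ is compact dicritical. In the local normal form with $E^N_i=(x=0)$ invariant and $E^N_j=(z=0)$ dicritical (so $E_{\mathrm{inv}}\supset (xy=0)$), the unique trace curve of $E^N_i$ through $p$ is $(x=z=0)\subset(z=0)$; hence $\gamma\subset E^N_j$, and the inclusion $\gamma\subset {\mathcal N}\cap E^N_j$ contradicts the completeness of ${\mathcal N}$. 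For the inductive step at $k<N$ with $p\in Y_k$, we must have $Y_k=\{p\}$ (since $p$ lies in two compact components) and, by Proposition \ref{pro:puntoscompletos}(a), $\pi_{k+1}$ is non-dicritical. Lifting $\gamma$ to its strict transform $\gamma' \subset {\mathcal N}_{k+1}$ through a suitable point of $E^{k+1}_i \cap E^{k+1}_{k+1}$, at which completeness of $C$ is preserved by Remark \ref{rk:completoestable}, the inductive hypothesis yields the invariance of the relevant components at level $k+1$ and hence of $E^k_j$.

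With $E^k_j$ invariant, $\Gamma$ is a corner curve and the reformulation of Corollary \ref{cor:transitioncompleta} (in the remark following it) gives
$$
m_i\, \mathrm{Ind}(C; E_i) \;=\; -\alpha\, m_j\, \mathrm{Ind}(C; E_j),
$$
where $\alpha = \mathrm{Ind}({\mathcal F}, E^k_i; \Gamma)$, and $m_i = \sum_{\gamma' \in {\mathcal B}_i^k(C;p)}(\gamma',\Gamma)_p$, $m_j = \sum_{\eta \in {\mathcal B}_j^k(C;p)}(\eta,\Gamma)_p$ are positive integers (the trace curve $\tilde\gamma_j$ in $E^k_j$ through $p$ belongs to $C$ by the connectedness of the set of trace curves at $p$). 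By Proposition \ref{pro:indexpartialseparatriz}, $\mathrm{Ind}(C;E_i)$ equals the index of the nodal curve $\gamma$, and therefore lies in ${\mathbb R}_{>0}\setminus{\mathbb Q}$. The uninterrupted condition for ${\mathcal N}$, applied at $p$ of dimensional type three, requires exactly two nodal curves of ${\mathcal N}_k$ through $p$ among $\gamma$, $\Gamma$ and $\tilde\gamma_j$; this rules out $\Gamma$ being a complex saddle, since otherwise the displayed formula would force $\tilde\gamma_j$ to have non-real index as well, leaving $\gamma$ as the only nodal curve at $p$.

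The two remaining cases then match the dichotomy of the statement: if $\Gamma \subset {\mathcal N}_k$, then $\alpha \in {\mathbb R}_{>0}\setminus{\mathbb Q}$ and the formula yields $\mathrm{Ind}(C;E_j) \in {\mathbb R}_{<0}$, making every $\tilde\gamma \in {\mathcal B}_j^k(C;p)$ a real saddle, which is case (a); if instead $\tilde\gamma_j \subset {\mathcal N}_k$, then $\mathrm{Ind}(C;E_j) \in {\mathbb R}_{>0}\setminus{\mathbb Q}$, and the formula combined with the constraint that an index cannot be positive rational forces $\alpha \in {\mathbb R}_{<0}$, that is, $\Gamma$ is a real saddle, giving case (b). I expect the main difficulty to lie in the inductive step when $\pi_{k+1}$ is centered at $p$: one must carefully select the lift $p'$ at which the hypothesis on $\gamma$ is preserved, $C$ remains complete, and the dimensional type structure is compatible with the uninterrupted condition at level $k+1$---this is where Remark \ref{rk:completoestable} and Proposition \ref{pro:puntoscompletos}(a) are decisive.
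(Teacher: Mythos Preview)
Your argument for part~(2) has a genuine gap. You invoke ``the uninterrupted condition for $\mathcal{N}$, applied at $p$ of dimensional type three'' at level $k$, but at an intermediate step $k<N$ the point $p$ is in general not a simple singularity at all, and the uninterrupted condition is a property of $\mathcal{N}$ at level $N$, not of its projection $\mathcal{N}_k$. More fundamentally, the index formula from Corollary~\ref{cor:transitioncompleta} only determines the numerical value $\mathrm{Ind}(C;E_j)$; it cannot distinguish between a curve being nodal and a curve actually belonging to $\mathcal{N}_k$. There may be other nodal curves, or even other uninterrupted nodal components, disjoint from $\mathcal{N}$. Thus knowing $\alpha\in\mathbb{R}_{>0}$ does not yield $\Gamma\subset\mathcal{N}_k$, and knowing $\mathrm{Ind}(C;E_j)\in\mathbb{R}_{>0}$ does not yield $\tilde\gamma\subset\mathcal{N}_k$. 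Your case split (``if $\Gamma\subset\mathcal{N}_k$'' versus ``if $\tilde\gamma_j\subset\mathcal{N}_k$'') is therefore not shown to be exhaustive, and this exhaustiveness is precisely the content of the proposition.

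The paper establishes part~(2) by the same induction on $N-k$ as part~(1), tracking membership in $\mathcal{N}_{k+1}$ explicitly through the triple point transitions (Proposition~\ref{pro:tripetransition}) at $p'=\Gamma'\cap E^{k+1}_{k+1}$. One distinguishes whether $L'_i=E^{k+1}_i\cap E^{k+1}_{k+1}$ lies in $\mathcal{N}_{k+1}$ or not, then follows the nodal membership across the exceptional projective plane into $E^{k+1}_j$ via the inductive hypothesis; only afterwards does Proposition~\ref{pro:indicesymultiplicidad} supply the real saddle conclusions. A minor correction to your base case: at dimensional type two the singular locus need not be a corner curve (it is a trace curve when only one invariant component of $E$ passes through $p$); the right argument is simply that $p\in\gamma\subset\mathcal{N}$ together with completeness of $\mathcal{N}$ forbids $p$ from lying on any compact dicritical component.
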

\begin{proof} As usual, we do induction on $N-k$. If $k=N$, we are done. Indeed,
 $p$ does not belong to any dicritical compact component, since $p\in {\mathcal N}$ and  $\mathcal N$ is  complete. Moreover, the alternative in (2) means that $\mathcal N$ is uninterrupted.

Assume now that $k<N$ and $p\in Y_k$ as usual.
If $Y_k$ is a curve, there is only one compact component of $E^{k}$ through $p$ and $E^k_j$ does not exist. We assume thus that $Y_k=\{p\}$. Since $p\in C_k$ is a complete point for $C$, then $\pi_{k+1}$ is non dicritical.  Let us denote
$$
L'_i=E^{k+1}_{k+1}\cap E^{k+1}_{i},\; L'_j=E^{k+1}_{k+1}\cap E^{k+1}_{j}; \Gamma'= E^{k+1}_i\cap E^{k+1}_{j}
$$
and let $p'$ be the intersection point $E^{k+1}_{k+1}\cap \Gamma'$.

Let us see that $E^{k+1}_j$ is invariant.
 If $L'_i\subset{\mathcal N}$, then  $E^{k+1}_j$ is invariant by Proposition \ref{pro:tripetransition} applied at $p'$.
 If $L'_i\not\subset{\mathcal N}$, the strict transform $\gamma'$ of $\gamma$ intersects $L'_i$ at some points. Let $q$ be one of such points. By induction hypothesis on $q$ there is a curve $\gamma^*\subset E^{k+1}_{k+1}$ with $\gamma^*\subset {\mathcal N}_{k+1}$ corresponding to the same partial separatrix $C$. We conclude that $E^{k+1}_j$ is invariant by induction hypothesis applied at the points of $\gamma^*\cap E^{k+1}_j$.

 Now, assume that $\Gamma\subset {\mathcal N}_k$. Hence $\Gamma'\subset {\mathcal N}_{k+1}$. By Proposition \ref{pro:tripetransition}, we have two possible situations:
 \begin{enumerate}
 \item [i)] $L'_i\subset {\mathcal N}_{k+1}$ and $L'_j\not\subset {\mathcal N}_{k+1}$.
 \item [ii)] $L'_j\subset {\mathcal N}_{k+1}$ and $L'_i\not\subset {\mathcal N}_{k+1}$.
 \end{enumerate}
 Assume we have i). Consider a point $q\in\gamma'\cap L'_i$. By induction hypothesis at $q$, there is a curve $\gamma^*\in {\mathcal B}^{k+1}_{k+1}(C;q)$ such that $\gamma^*\not\subset {\mathcal N}_{k+1}$. We consider a point $q'\in \gamma^*\cap L'_j$. Also by induction hypothesis at $q'$ there is $\tilde\gamma'\in {\mathcal B}^{k+1}_j(C;q')$ such that $\tilde\gamma'\not\subset {\mathcal N}_{k+1}$. Now it is enough to take $\tilde\gamma$ the image of $\tilde\gamma'$ by $\pi_{k+1}$. In the case ii) we do a similar argumentation.

 Also, if
 $\Gamma\not\subset {\mathcal N}_k$ we have two possibilities:
 \begin{enumerate}
 \item [i)] $L'_i\subset {\mathcal N}_{k+1}$ and $L'_j\subset {\mathcal N}_{k+1}$.
 \item [ii)] $L'_j\not\subset {\mathcal N}_{k+1}$ and $L'_i\not\subset {\mathcal N}_{k+1}$.
 \end{enumerate}
 By the same kind of argumentations we find $\tilde\gamma\in {\mathcal B}^k_j(C;p)$ with $\tilde\gamma\subset {\mathcal N}_k$.

 The statements relative to the real saddles are consequence of Proposition \ref{pro:indicesymultiplicidad}.
\end{proof}
\section{Incompleteness of uninterrupted nodal components}
As explained in the Introduction,  Theorem \ref{teo:mainteo} is a consequence of the following result:
\begin{theorem}
 \label{teo:nodalcomponents}  Any uninterrupted
 nodal component $\mathcal N$ of ${\mathcal F}_N,E^N$ is incomplete.
\end{theorem}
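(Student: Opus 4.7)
\emph{The plan is to argue by contradiction.} Assume a complete uninterrupted nodal component $\mathcal N$ exists. Since $\mathcal F$ has no invariant germ of surface, Proposition \ref{pro:incompletitudpartialsep} ensures that every partial separatrix is incomplete, while the remark preceding \S 5 guarantees that at the final step $k=N$ every point of $F_N$ is a complete point. The strategy is to isolate, at each level $k$, a local configuration — a \emph{scenario} — that (a) forces an incomplete point to exist at level $k$, (b) can be found at a ``birth level'' $k_0$, and (c) propagates from level $k$ to level $k+1$. Iterating (c) then produces a scenario at $k=N$ and thus an incomplete point in $F_N$, which is the desired contradiction.

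\emph{Definition and birth of a scenario.} A scenario at level $k$ is a quadruple $(p,E^k_i,C,\gamma)$ in which $p\in E^k_i\cap F_k$ is an incomplete point, $E^k_i$ is a compact invariant component, $C$ is a partial separatrix incomplete at $p$, and $\gamma\in\mathcal B^k_i(C;p)$ satisfies $\gamma\subset\mathcal N_k$. Property (a) is immediate. For (b), I pick a compact nodal curve $\Gamma\subset\mathcal N$ and a compact invariant component $E_i$ containing it; the partial separatrix $C_\Gamma$ associated to $\Gamma$ is incomplete by Proposition \ref{pro:incompletitudpartialsep}. Descending along $\rho_k$ to the smallest level $k_0$ where $\mathcal N_{k_0}$ first contains a compact trace curve in a compact invariant component, I use Proposition \ref{pro:incompletodicriticoono} applied to $C_\Gamma$ (possibly shifted via Proposition \ref{pro:puntoscompletos}) to migrate the incompleteness onto a point $p_0$ of that component, and invoke Proposition \ref{pro:notallrealsaddles} to ensure that a non--real-saddle, hence nodal, curve of $\mathcal N_{k_0}$ passes through $p_0$. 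This yields the first scenario.

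\emph{Propagation.} Given a scenario at level $k<N$, I distinguish cases by the center $Y_k$ of $\pi_{k+1}$. If $p\notin Y_k$, the scenario lifts through the local isomorphism induced by $\pi_{k+1}$ at $p$. If $Y_k$ is a curve through $p$, Remark \ref{rk:completitudequirreduction} forces every partial separatrix to be complete at $p$, contradicting the incompleteness of $(p,C)$; hence this case does not occur. The remaining case, $Y_k=\{p\}$, is the substantive one. Here I would apply Proposition \ref{pro:tracetransitions} to transfer the tangent nodal data from $E^{k+1}_i$ to a neighboring compact invariant component through $L'_i=E^{k+1}_{k+1}\cap E^{k+1}_i$, and Proposition \ref{pro:tripetransition} to control the flow of $\mathcal N_{k+1}$ across triple intersections on $E^{k+1}_{k+1}$. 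These transitions will produce a successor nodal curve $\gamma'\subset\mathcal N_{k+1}$ lying in some compact invariant component $E^{k+1}_{i'}$. Corollary \ref{cor:transitioncompleta} and Proposition \ref{pro:notallrealsaddles} are then used to show that not all candidate points on $\gamma'$ can become complete, so that some $p'$ remains incomplete for $C$ (or for a tangent companion partial separatrix given by Proposition \ref{pro:globaltransitions}), giving the next scenario.

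\emph{Main obstacle.} The hard part will be the non-dicritical point blow-up case in which the exceptional $E^{k+1}_{k+1}$ is invariant and the curve $\gamma$ itself is absorbed or transformed at $p$. I expect to have to use the Camacho--Sad identities of Proposition \ref{pro:indicesymultiplicidad}, Corollary \ref{cor:indicesymultiplicidad} and the well-definedness from Proposition \ref{pro:indexpartialseparatriz} to argue that the nodal character of $\gamma$ forces a companion nodal curve to appear somewhere on $L'_i$ or on the new triple intersections $\Gamma'=E^{k+1}_i\cap E^{k+1}_j$, and at the same time to track the dicritical component responsible for the incompleteness through $\pi_{k+1}$ using Remark \ref{rk:completoestable} so that incompleteness is not lost in transit. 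Once this case is handled, induction on $N-k$ propagates the scenario all the way to $k=N$, where the existence of an incomplete point contradicts the final-step completeness of every $p\in F_N$, thereby establishing Theorem \ref{teo:nodalcomponents}.
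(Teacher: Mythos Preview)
Your overall contradiction strategy matches the paper's, but the single ``scenario'' you propose is too rigid to propagate, and this is a genuine gap rather than a detail to be filled in. Your scenario requires a \emph{trace} curve $\gamma\in\mathcal B^k_i(C;p)$ with $\gamma\subset\mathcal N_k$ and $C$ incomplete \emph{at $p$}. The paper's persistency argument shows that under a non-dicritical point blow-up with $L=E^{k+1}_{k+1}\cap E^{k+1}_i\subset\mathcal N_{k+1}$, the incomplete point may land on the \emph{corner} curve $L$ (no trace curve of $\mathcal N$ through it, no associated partial separatrix), or on a point $q'\in E^{k+1}_{k+1}$ where the nodal curve through $q'$ lies in a partial separatrix for which $q'$ is \emph{complete}, while $q'$ is incomplete only for some other, unrelated partial separatrix. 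Neither of these fits your quadruple $(p,E^k_i,C,\gamma)$. This is exactly why the paper introduces three distinct situations \textbf{A($k$)}, \textbf{B($k$)}, \textbf{C($k$)}: \textbf{B} allows $\Gamma$ to be a corner curve $E^k_i\cap E^k_j\subset\mathcal N_k$, and \textbf{C} decouples the partial separatrix witnessing incompleteness from the one carrying $\mathcal N_k$, requiring instead that every singular curve of $E^k_i$ through $q$ be either a real saddle or in $\mathcal N_k$. The persistency proof (their Proposition~\ref{prop:persistency}) genuinely cycles among these three cases; you cannot stay inside \textbf{A} alone.

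Two smaller but real problems: (i) in your birth step, ``non--real-saddle, hence nodal'' is false in general --- a complex saddle is also non--real-saddle --- and the conclusion only holds once you have already established a \textbf{C}-type environment in which all curves through the point are real saddles or in $\mathcal N$; (ii) the paper's birth analysis is not a migration via Propositions~\ref{pro:incompletodicriticoono} and~\ref{pro:puntoscompletos} as you sketch, but a case split on the number and type (invariant/dicritical) of components of $E^{b-1}$ through the birth point, preceded by a lemma ruling out a monoidal birth blow-up via the refined Camacho--Sad argument. To repair your proof you should enlarge your scenario to the triple \textbf{A}/\textbf{B}/\textbf{C} (or an equivalent covering family) and redo both the birth and the propagation steps accordingly.
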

In this section we provide a proof for Theorem \ref{teo:nodalcomponents}. We assume that ${\mathcal F}$ has no germ of invariant surface and that ${\mathcal N}$ is a {\em complete} uninterrupted nodal component. We shall find a contradiction with the fact that ${\mathcal N}$ is complete.

Let $b>0$ be the {\em date of birth of the compact part of }
$\mathcal N$, that is we assume that ${\mathcal N}_k\cap F_k$ is a single point for $0\leq k<b$ and that ${\mathcal N}_b$ contains at least one compact curve. Note that ${\mathcal N}$ contains  at least one compact curve, because $\pi_1$ is the blow-up centered at the origin and hence the fiber $F=\pi^{-1}(0)$ is the union of the compact components of $E$. If we take a point $q\in {\mathcal N}\cap F$, the compact components of $E$ through $q$ are invariant, by the completeness of $\mathcal N$. If the dimensional type of $q$ is two, the singular locus of ${\mathcal F}_N$ coincides locally with ${\mathcal N}$ and it is contained in the invariant components of $E$ through $p$. If the dimensional type is three, we have two curves of $\mathcal N$, one of them is necessarily contained in a compact invariant component. As a consequence of this we find that $1\leq b\leq N$.
\begin{lemma}
\label{lema:nomonoidal}
The blow-up $\pi_b$ is not centered at a germ of curve.
\end{lemma}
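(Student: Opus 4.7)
The plan is to argue by contradiction: assume that $\pi_b$ is centered at a germ of curve $(Y_{b-1},p_{b-1})$, and show that $\mathcal{N}_b\cap F_b$ cannot contain any compact curve, contradicting the birth of the compact part of $\mathcal{N}$ at step $b$. The argument splits in two parts: identifying the unique candidate compact curve, and then ruling it out using the equireduction structure of $\mathcal{S}$.

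For the identification, since $\pi_b$ is a curve blow-up, the new exceptional component $E^b_b=\pi_b^{-1}(Y_{b-1})$ is a ruled surface over the germ $(Y_{b-1},p_{b-1})$, and its intersection with $F_b$ is the single projective line $L:=\pi_b^{-1}(p_{b-1})$. Any compact irreducible $\gamma\subset\mathcal{N}_b\cap F_b$ is the image $\rho_b(\Gamma)$ of some compact irreducible $\Gamma\subset\mathcal{N}\cap F$, and its further projection $\pi_b(\gamma)=\rho_{b-1}(\Gamma)$ lies in $\mathcal{N}_{b-1}\cap F_{b-1}=\{p_{b-1}\}$. Hence $\gamma\subset L$ and, both being irreducible and one-dimensional, $\gamma=L$. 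The same argument rules out strict transforms of compact curves of $F_{b-1}$, because they would project under $\pi_b$ to compact curves of $\mathcal{N}_{b-1}\cap F_{b-1}$.

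To rule out $L$, I would use the equireduction hypothesis along $Y_{b-1}$. Choosing a plane section $\Delta$ transverse to $Y_{b-1}$ at a generic point $q\in Y_{b-1}$, the restriction of $\pi_b$ over $\Delta$ is the standard point blow-up of $\Delta\cap Y_{b-1}$, and the transform of $\mathcal{F}_{b-1}|_{\Delta}$ has only finitely many singular points on the new exceptional line. Equireduction guarantees the same local picture at $p_{b-1}$, so the singular locus of $\mathcal{F}_b$ along $E^b_b$ near $L$ is a finite union of sections of the ruling $\pi_b|_{E^b_b}:E^b_b\to Y_{b-1}$, and not the fiber $L$. Moreover, the subsequent blow-ups $\pi_{b+1},\ldots,\pi_N$ are either disjoint from the preimage of $p_{b-1}$ or centered at curves that $\pi_{b-1,\ell}$ maps isomorphically onto $Y_{b-1}$ (i.e. sections of the ruling), so the generic point of $L$ is never modified and $\rho_b$ restricts to an isomorphism on a Zariski-dense open subset of $L$. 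If some $\tilde L\subset\mathcal{N}\subset\mbox{\rm Sing}\,\pi^*\mathcal{F}$ satisfied $\rho_b(\tilde L)=L$, the generic point of $L$ would then have to lie in $\mbox{\rm Sing}\,\mathcal{F}_b$, contradicting the sectional description. Thus $\mathcal{N}_b\cap F_b$ contains no compact curve, giving the desired contradiction.

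The delicate step is the ``sections, not fibers'' assertion about $\mbox{\rm Sing}\,\mathcal{F}_b\cap E^b_b$ near $L$. I would justify it by choosing local coordinates $(x,y,z)$ at $p_{b-1}$ adapted to $Y_{b-1}=\{x=y=0\}$, so that equireduction means $\mathcal{F}_{b-1}$ is defined by a $1$-form whose singular behavior is concentrated in the normal pair $(x,y)$; the blow-up $\pi_b$ is then the product of the standard blow-up of $\{x=y=0\}$ in the $(x,y)$-plane with the identity in $z$, and the singular curves of $\mathcal{F}_b$ sitting in $E^b_b$ appear exactly as graphs over $Y_{b-1}$ of the singular points of the two-dimensional blow-up. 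These graphs are sections of $\pi_b|_{E^b_b}$, transverse to the fiber $L$, and therefore $L$ is not contained in $\mbox{\rm Sing}\,\mathcal{F}_b$.
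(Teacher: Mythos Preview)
Your identification step is fine: the only compact irreducible curve that could appear in $\mathcal{N}_b\cap F_b$ is indeed the fiber $L=\pi_b^{-1}(p_{b-1})$. The gap is in your ruling-out step. You overlook that $p_{b-1}\in F_{b-1}$ necessarily lies on a compact component $E^{b-1}_i$ of $E^{b-1}$, and by the normal-crossings condition together with $Y_{b-1}\cap F_{b-1}=\{p_{b-1}\}$ this component is \emph{transversal} to $Y_{b-1}$. A short coordinate computation (take $Y_{b-1}=\{x=y=0\}$, $E^{b-1}_i=\{z=0\}$) then shows that the fiber $L$ coincides with the corner curve $E^b_i\cap E^b_b$. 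In the typical case where both $E^b_i$ and $E^b_b$ are invariant, the intersection of two transversal invariant surfaces is automatically contained in $\mbox{\rm Sing}\,\mathcal{F}_b$. So your ``sections, not fibers'' description of $\mbox{\rm Sing}\,\mathcal{F}_b\cap E^b_b$ is wrong precisely at the special fiber $L$: the singular locus there is the union of the sections you describe \emph{and} the curve $L$ itself. Nothing prevents $L$ from being a nodal curve and from lying in $\mathcal{N}_b$; in fact the paper's own proof explicitly allows compact curves $\Gamma_{j_\ell}=E_i\cap E_{j_\ell}\subset\mathcal{N}$ of exactly this type.

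This is why the paper's argument is more delicate. It accepts that such corner curves $\Gamma_{j_\ell}$ may lie in $\mathcal{N}$, carries out the full equireduction over $Y_{b-1}$, and then invokes the refined Camacho--Sad theorem on a generic transversal section to produce a non-compact trace curve of index $\notin\mathbb{R}_{>0}$ meeting one of the $\Gamma_{j_\ell}$. The uninterruptedness of $\mathcal{N}$ at that dimensional-type-three point then forces the third branch, a \emph{trace} curve inside the compact divisor $E_i$, to belong to $\mathcal{N}$; projecting down gives a compact curve of $\mathcal{N}$ already visible at level $b-1$, contradicting the definition of $b$. Your approach cannot be repaired without an argument of this flavor, because the obstruction is not that $L$ fails to be singular but that $L\subset\mathcal{N}_b$ is a genuine possibility which must be excluded by other means.
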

\begin{proof}
Suppose  by contradiction  that $\pi_b$ is the (monoidal) blow-up centered  at a germ of curve $(Y_{b-1},p)$. The point $p\in F_{b-1}$ is contained in a compact component $E^{b-1}_i$ of $E^b$ transversal to $Y_{b-1}$. Now, we have equireduction along $Y_{b-1}$. We consider all the blow-ups we do over $Y_{b-1}$ and we reach a desingularized situation over the point $p$. The fiber of $p$ contains a maximal connected union of compact curves in ${\mathcal N}$, say
$$
\Gamma_{j_1}\cup\Gamma_{j_2}\cup \cdots\cup \Gamma_{j_s}, \quad \Gamma_{j_\ell}\cap \Gamma_{j_{\ell+1}}\ne\emptyset, \; \ell=1,2,\ldots,s-1.
$$
Each $\Gamma_{j_\ell}$ is of the form  $\Gamma_{j_\ell}=E_i\cap E_{j_\ell}$ where $E_{j_\ell}$ is non compact. Moreover, by the fact that $\mathcal N$ is uninterrupted, we have two possibilities:
\begin{enumerate}
\item The curves $\Gamma_{j_\ell}$ represent all the components of $E$ contained in the inverse image of $Y_{b-1}$.
\item There are two noncompact curves $\gamma_1=E_{j_1}\cap E_{j_0}$ and $\gamma_s= E_{j_s}\cap E_{j_{s+1}}$ such that $\gamma_1,\gamma_s\subset{\mathcal N}$ and none of the curves $E_{j_\ell}\cap E_{j_{\ell+1}}$ are in ${\mathcal N}$ for $\ell=1,2,\ldots, s-1$.
\end{enumerate}
 Moreover the concerned divisors are non dicritical. Now, we can apply the refined Camacho-Sad Theorem \cite{Ort-B-V} to a transversal plane section at a generic point of $Y_{b-1}$ near $p$. In this way, we  find a non compact trace curve of generic index not in ${\mathbb R}_{>0}$ that cuts one of the compact curves $\Gamma_{j_\ell}$. Since ${\mathcal N}$ is uninterrupted there is a compact trace curve of ${\mathcal N}$ contained in $E_i^{b-1}$, this is the desired contradiction.
\end{proof}

In view of Lemma \ref{lema:nomonoidal} we suppose that $\pi_b$ is a (quadratic) blow-up  centered at the point $p$. We also know that $\pi_b$ is non dicritical, since there is a compact curve $\Gamma\subset E^b_b\cap {\mathcal N}_b$. Moreover, the point $p$ belongs only to compact components of $E^{b-1}$, otherwise the blow-up should be monoidal.

We  consider separately the cases $b=1$ and $b>1$.

Assume that $b=1$ and consider the exceptional divisor $E^1_1=\pi_1^{-1}(0)$.  The curve  $\Gamma\subset {\mathcal N}_1\cap E^1_1$ is a compact trace curve and thus there is a partial separatrix $C=C_\Gamma$ with $\Gamma\subset C_1$. Consider the set  $C_1\cap E^1_1$.  In view of Proposition \ref{pro:globaltransitions}, any irreducible component $\Gamma'$ of $C_1\cap E^1_1$ satisfies $\Gamma'\subset{\mathcal N}_1$. Recall that $C$ is an incomplete partial separatrix  in view of Proposition \ref{pro:incompletitudpartialsep} and thus there is at least a point $q\in C_1\cap E^1_1$ such that $q$ is incomplete for $C$ by Proposition \ref{pro:incompletodicriticoono}. Therefore, we have the following situation for $k=1$:
\begin{quote}
{\bf A($k$):} There is a compact trace curve $\Gamma\subset {\mathcal N}_k$ and a point $q\in \Gamma$ incomplete for the partial separatrix $C_\Gamma$.
\end{quote}

Assume now that $b>1$. In this case $p$ belongs to at least one compact component of $E^{b-1}$. Recall that all the components of $E^{b-1}$ through $p$ are compact. We discuss case by case.

(1). We have only one component $E^{b-1}_i$ of $E^{b-1}$ through $p$, which may be dicritical or invariant.

(1-a). Assume that $E^{b-1}_i$ is dicritical. We perform the blow-up $\pi_b$ and  obtain a trace curve \break $\Gamma\subset {\mathcal N}_b\cap E^{b}_b$. By Proposition \ref{pro:tracetransitions}, the points in $\Gamma\cap E^{b}_i$ are incomplete  for $C_\Gamma$. We arrive to situation {\bf A($k$)} for $k=b$.

(1-b). Let us suppose now that $E^{b-1}_i$ is  invariant.  Consider the projective line \break
$L=E^b_i\cap E^b_b$, then  either $L\subset {\mathcal N}_b$ or not.

 (1-b-1). Assume  that $L\subset {\mathcal N}_b$. By taking a generic plane section at $p$ and by  Camacho-Sad's argument on the sum of indices as in the proof of Proposition \ref{pro:notallrealsaddles}, we find a compact trace curve $\Theta\subset E^b_b$ such that the index of $\Theta$ is not in ${\mathbb R}_{> 0}$. We consider a point $q$ of intersection of $L$ and $\Theta$. If $q$ is a complete point for $C_\Theta$, by Proposition \ref{pro:tracetransitions} we should obtain a trace curve in $E^b_i$ contained in ${\mathcal N}_b$;  this is not possible since $b$ is the date of birth of $\mathcal N$. Thus $q$ is an incomplete point. We obtain the following situation for $k=b$:
\begin{quote}
{\bf B($k$):} There are a compact curve $\Gamma\subset {\mathcal N}_k$ such that $\Gamma=E^k_i\cap E^k_j$ is the intersection of two invariant compact components and an incomplete point $q\in \Gamma$.
\end{quote}

(1-b-2).
Assume that $L\not\subset {\mathcal N}_b$. Then there is a trace curve $\Gamma\subset {\mathcal N}_b\cap E^b_b$. Consider a point $q\in \Gamma\cap L$. If $q$ is complete for $C_\Gamma$, we apply the trace transitions of Proposition \ref{pro:tracetransitions} and this contradicts the fact that $b$ is the date of birth of $\mathcal N$. Thus the point $q$ is incomplete for $C_\Gamma$ and we arrive to situation {\bf A($k$)} for $k=b$.

(2). There are two components $E^{b-1}_i, E^{b-1}_j$ of $E^{b-1}$ through $p$.

(2-a). If both components are dicritical, we do an argument as in case (1-a) to obtain {\bf A($k$)} for $k=b$.

(2-b).  If $E^{b-1}_i$ in invariant and $E^{b-1}_j$ is dicritical, we have two possibilities:

(2-b-1). There is a trace curve $\Gamma\subset {\mathcal N}_b\cap E^b_b$.  Take a point $q\in \Gamma\cap E^b_j$, in view of Proposition \ref{pro:tracetransitions}, the point $q$  must be incomplete for $C_\Gamma$. We obtain {\bf A($k$)} for $k=b$.

(2-b-2). The other case is that $L=E^b_i\cap E^b_b$ is contained in $\mathcal N$. By Proposition \ref{pro:tripetransition} this is not possible. 

(2-c). Both $E^{b-1}_i$ and $E^{b-1}_j$ are invariant.
 
 (2-c-1). If $E^{b}_i\cap E^b_b\subset {\mathcal N}_b$,  by Proposition \ref{pro:tripetransition} we have that $E^{b}_j\cap E^b_b\subset {\mathcal N}_b$. By the already used argument on the sum of indices for a generic plane section at $p$, we find  a trace curve $\Theta\subset E^b_b$ of index not in ${\mathbb R}_{>0}$. The trace transitions of $\Theta$ described in Proposition \ref{pro:tracetransitions} will produce  curves of ${\mathcal N}_{b-1}$ previously existing in $E^{b-1}_i\cup E^{b-1}_j$, unless we have incomplete points in $E^{b}_i\cap E^{b}_b$ and $E^{b}_j\cap E^{b}_b$ at the intersections with $\Theta$. Thus we obtain {\bf B($k$)} for $k=b$. 
 
 (2-c-2). Assume now that $E^{b}_i\cap E^b_b\not\subset {\mathcal N}_b$. By Proposition \ref{pro:tripetransition} we also have that $E^{b}_j\cap E^b_b\not\subset {\mathcal N}_b$, since otherwise we should have $E^{b-1}_i\cap E^{b-1}_j\subset {\mathcal N}_{b-1}$. The other possible curves in $E^b_b$ are of trace type and thus  the curve $\Gamma\subset {\mathcal N}_b$ that appears after $\pi_b$ is a trace curve. We obtain  {\bf A($k$)} for $k=b$ as in (1-b-2).

(3). There are three components $E^{b-1}_i, E^{b-1}_j$ and $E^{b-1}_\ell$ of $E^{b-1}$ through $p$. We use the same kind of argumentation as in the cases (1) and (2) to reach one of the situations {\bf A($k$)} or {\bf B($k$)} for $k=b$.
\\

We have identified two situations {\bf A($k$)} and {\bf B($k$)} such that one of them appears in the birth level of the uninterrupted nodal component $\mathcal N$. We would like to show the persistency of this phenomenon at further levels of the reduction of singularities. However, another situation must be considered, which is the following:
\begin{quote}
{\bf C($k$):} There is a compact invariant component $E^{k}_i$, a compact curve $\Gamma\subset E^k_i\cap {\mathcal N}_k$ and an incomplete point $q\in \Gamma$ such that the following holds:  every global irreducible curve $\Theta\subset\mbox{\rm Sing}{\mathcal F}_k\cap E^k_i$  with $q\in \Theta$  is either in ${\mathcal N}_k$ or a real saddle.
\end{quote}

\begin{proposition}[Persistency]
\label{prop:persistency}
Assume that there is an index $1\leq k<N$, a global curve $\Gamma\subset {\mathcal N}_k$ and an incomplete point $q\in \Gamma$ in one of the situations {\bf A($k$)},  {\bf B($k$)} or  {\bf C($k$)}. Then there is a global curve $\Gamma'\subset {\mathcal N}_{k+1}$ and an incomplete point $q'\in \Gamma'$ in one of the situations {\bf A($k+1$)},  {\bf B($k+1$)} or  {\bf C($k+1$)}.
\end{proposition}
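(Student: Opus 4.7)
The plan is to proceed by case analysis on the blow-up $\pi_{k+1}$ and its center $Y_k$. First I would reduce to the case $Y_k = \{q\}$: if $q \notin Y_k$, then $\pi_{k+1}$ is an isomorphism near $q$, the strict transform of $\Gamma$ and the partial separatrices through $q$ transfer directly (using Remark \ref{rk:completoestable} to preserve incompleteness), so exactly the same situation reappears at level $k+1$; if $q \in Y_k$ and $Y_k$ is a germ of curve, Remark \ref{rk:completitudequirreduction} forces every partial separatrix to be complete at $q$, contradicting the incompleteness of $q$. Hence $\pi_{k+1}$ is a quadratic blow-up at $q$. Denote by $E^{k+1}_{k+1}$ the new exceptional component and, for each compact $E^k_u$ through $q$, the projective line $L'_u = E^{k+1}_u \cap E^{k+1}_{k+1}$.

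For situation \textbf{A($k$)}, the strict transform $\Gamma'$ of the trace curve $\Gamma \subset \mathcal{N}_k \cap E^k_i$ is again a nodal trace curve in $E^{k+1}_i$ meeting $L'_i$ at a point $q'$. If $\pi_{k+1}$ is non-dicritical, then either $L'_i \subset \mathcal{N}_{k+1}$, producing \textbf{B($k+1$)} at $q'$, or $L'_i \not\subset \mathcal{N}_{k+1}$, in which case $\Gamma'$ remains a trace curve and $q'$ remains incomplete for $C_\Gamma$ by Remark \ref{rk:completoestable}, yielding \textbf{A($k+1$)}. If $\pi_{k+1}$ is dicritical, then $E^{k+1}_{k+1}$ is a compact dicritical component into which $C_\Gamma$ extends; applying Proposition \ref{pro:tracetransitions} (or Proposition \ref{pro:tripetransition} in the presence of another invariant component through $q$) at the points of $E^{k+1}_{k+1}\cap\Gamma'$ produces the required incomplete configuration in one of the three types, reasoning exactly as in the birth-level subcases (1-a)--(2-b).

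For situation \textbf{B($k$)}, with $\Gamma = E^k_i\cap E^k_j \subset \mathcal{N}_k$, the blow-up $\pi_{k+1}$ must be non-dicritical: a dicritical blow-up would contradict the nodality of $\Gamma$ via the Camacho-Sad sum on a transversal plane (as in Proposition \ref{pro:notallrealsaddles}) combined with Proposition \ref{pro:tripetransition}. At the triple point $p' = E^{k+1}_i \cap E^{k+1}_j \cap E^{k+1}_{k+1}$, Proposition \ref{pro:tripetransition} forces exactly one of $L'_i, L'_j$ to belong to $\mathcal{N}_{k+1}$. The same Camacho-Sad index sum on a transversal plane through $q$ yields, as in the birth-level case (2-c-1), a compact curve $\Theta \subset E^{k+1}_{k+1}$ whose index is not in $\mathbb{R}_{>0}$; applying Proposition \ref{pro:tracetransitions} at intersection points of $\Theta$ with $L'_i$ or $L'_j$ (and distinguishing whether $\Theta$ is trace or corner and whether such an intersection point is complete for $C_\Theta$) produces the desired \textbf{A($k+1$)}, \textbf{B($k+1$)} or \textbf{C($k+1$)}. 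For situation \textbf{C($k$)}, I would begin by invoking Proposition \ref{pro:notallrealsaddles} at the incomplete point $q \in E^k_i$: it produces a global singular curve in $E^k_i$ through $q$ which is not a real saddle, hence by the C-hypothesis lies in $\mathcal{N}_k$. One then runs an analysis parallel to B, using Propositions \ref{pro:tripetransition} and \ref{pro:tracetransitions} to transport both the nodal information and the ``real saddle or in $\mathcal{N}$'' dichotomy across the blow-up, and using Corollary \ref{cor:transitioncompleta} to control the Camacho-Sad transitions at complete points so that any potential escape from type C is intercepted by a type A or B configuration.

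The main obstacle is the proliferation of subcases arising from the interplay of three independent choices: whether $\pi_{k+1}$ is dicritical or not; the number and type (invariant or dicritical) of compact components of $E^k$ through $q$; and the distribution of curves of $\mathcal{N}_{k+1}$ and of real saddle curves on the new projective divisor $E^{k+1}_{k+1}$. Each subcase is individually resolved by the uniform package of Camacho-Sad index sums on transversal planes, Propositions \ref{pro:tripetransition}--\ref{pro:tracetransitions}, Proposition \ref{pro:notallrealsaddles} and Corollary \ref{cor:transitioncompleta}; the delicate point is that the defining condition of C($k$) is asymmetric (it privileges one component $E^k_i$), so preserving it at $k+1$ requires careful bookkeeping of which of the new lines $L'_u$ can host the new witness curve $\Gamma'$, and verifying that all other global singular curves through the new incomplete point $q'$ are either nodal or real saddles.
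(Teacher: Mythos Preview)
Your reduction to $Y_k=\{q\}$ is correct and matches the paper. The genuine gap is in your treatment of the non-dicritical subcases of \textbf{A($k$)}: you assert that the specific point $q'\in\Gamma'\cap L'_i$ is incomplete ``by Remark~\ref{rk:completoestable}'', but that remark only says that $C_\Gamma$ incomplete at $q$ implies there \emph{exists} an incomplete point in $(C_\Gamma)_{k+1}\cap\pi_{k+1}^{-1}(q)$, not that your chosen $q'$ is one. The same oversight appears when you claim $L'_i\subset\mathcal N_{k+1}$ immediately yields \textbf{B($k+1$)} at $q'$: situation \textbf{B} requires the point to be incomplete, and nothing forces this particular $q'$ to be so.

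The paper repairs this by inserting a dichotomy. In the subcase $L'_i\subset\mathcal N_{k+1}$ one asks whether \emph{any} point of $L'_i$ is incomplete; if so, that point gives \textbf{B($k+1$)}. If every point of $L'_i$ is complete, one still knows there is some incomplete point $q'\in E^{k+1}_{k+1}$, and then Proposition~\ref{pro:notallrealsaddles} produces a non--real-saddle curve $\Gamma'$ through $q'$; at the (now complete) point $\Gamma'\cap L'_i$ the transition rules of Propositions~\ref{pro:tripetransition} and~\ref{pro:tracetransitions} force $\Gamma'\subset\mathcal N_{k+1}$, and iterating over all non--real-saddle curves through $q'$ yields \textbf{C($k+1$)}. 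In the subcase $L'_i\not\subset\mathcal N_{k+1}$ there is an analogous split: either some point of $L'_i\cap\Gamma'$ is incomplete for $C_\Gamma$ (giving \textbf{A($k+1$)}), or all such points are complete, whence Proposition~\ref{pro:tracetransitions} carries $C_\Gamma$ into $E^{k+1}_{k+1}$ with its curves inside $\mathcal N_{k+1}$, and the guaranteed incomplete point for $C_\Gamma$ now sits on one of those curves, again giving \textbf{A($k+1$)}. This ``is the line $L$ entirely complete?'' dichotomy, together with the use of completeness of $L$ to activate the transition propositions, is equally the engine in cases \textbf{B} and \textbf{C}; your sketches for those cases name the right propositions but omit precisely this branching, which is where the actual work lies.
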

\begin{proof}  If $\pi_{k+1}$ is centered at $Y_k$ with $q\notin Y_k$, we obviously reach  {\bf A($k+1$)}, {\bf B($k+1$)} or {\bf C($k+1$)} at the ``same'' point $q$. Thus, we assume  $q\in Y_k$. Moreover, since $q$ is incomplete, we have $Y_k=\{q\}$. Indeed, if $Y_k$ is a germ of curve, the point $q$ is complete. Then $E^{k+1}_{k+1}=\pi_k^{-1}(q)$ is a projective plane.

(a). Assume that we have {\bf A($k$)}. Let $E^k_i$ be the compact invariant component such that $\Gamma\subset E^k_i$. We consider two cases:

(a-1). {\em The blow-up $\pi_{k+1}$ is dicritical}. We consider the strict transform $\Gamma'$ of $\Gamma$ and a point \break $q'\in \Gamma'\cap E^{k+1}_{k+1}$. In view of Proposition \ref{pro:tracetransitions} the point $q'$ must be incomplete for $C_{\Gamma}$ and we recover the situation {\bf A($k+1$)}.

(a-2). {\em  The blow-up $\pi_{k+1}$ is non dicritical}.  Let us put $L=E^{k+1}_{k+1}\cap E^{k+1}_i$. 

(a-2-1). Assume first that $L\subset {\mathcal N}_{k+1}$. If there is an incomplete point $q'\in L$ we obtain {\bf B($k+1$)}. Thus we  assume that all the points in $L$ are complete. We find an incomplete point $q'\in E^{k+1}_{k+1}$. By Proposition \ref{pro:notallrealsaddles}, there is a global irreducible curve $\Gamma'\subset {E^{k+1}_{k+1}}$ with $q'\in \Gamma'$ that is not a real saddle. We consider a (complete) point $p'\in L\cap \Gamma'$. Now, by Proposition \ref{pro:tracetransitions} or Proposition \ref{pro:tripetransition} we see that $\Gamma'$ must be contained in ${\mathcal N}_{k+1}$.
This argument also works for all non real saddle curves through $q'$. Hence we find {\bf C($k+1$)} or {\bf B($k+1$)} at $q'$.

(a-2-2).
It remains to consider the case that $L\not\subset {\mathcal N}_{k+1}$. If there is a point $q'\in L\cap \tilde \Gamma$  incomplete  for $C_\Gamma$, where $\tilde\Gamma$ is the strict transform of $\Gamma$, we obtain {\bf A($k+1$)} at $q'$. If not, we consider the transitions given in Proposition \ref{pro:tracetransitions} to see that $C_\Gamma\cap E^{k+1}_{k+1}$ is contained in ${\mathcal N}_{k+1}$. Moreover, there exists a point $q'\in C_\Gamma\cap E^{k+1}_{k+1}$ incomplete for  $C_\Gamma$. We recover {\bf A($k+1$)} at $q'$.

(b). Assume we have {\bf B($k$)}. Put $L_i=E^{k+1}_{k+1}\cap E^{k+1}_i$ and $L_j=E^{k+1}_{k+1}\cap E^{k+1}_j$. Let $p'=L_i\cap L_j$. By Proposition \ref{pro:tripetransition} we know that $\pi_{k+1}$ is a non dicritical blow-up. Moreover, we have that  $$L_i\subset{\mathcal N}_{k+1} \Leftrightarrow  L_j\not\subset{\mathcal N}_{k+1}.$$
To fix ideas, suppose that $L_i\subset{\mathcal N}_{k+1}$ and $L_j\not\subset{\mathcal N}_{k+1}$. If there is an incomplete point at $L_i$ we have {\bf B($k+1$)} at such a point. So we assume that all the points in $L_i$ are complete. This means that there is an incomplete point $q'\in E^{k+1}_{k+1}\setminus L_i$. We repeat at this point the previous argument for the case (a-2-1)
and we recover {\bf C($k+1$)} at $q'$.

(c). Let us assume finally that we have {\bf C($k$)}. We also suppose that we are not in the situations {\bf A($k$)} or {\bf B($k$)} already studied and hence $\Gamma$ is a trace curve and $q$ is complete for $C_\Gamma$.
As in case (b), we may assume that $\pi_{k+1}$ is non dicritical, otherwise we obtain {\bf A($k+1$)} at the strict transform of $\Gamma$.  Let us put $L=E^{k+1}_{k+1}\cap E^{k+1}_i$. 

(c-1). Assume first that $L\subset {\mathcal N}_{k+1}$. We may assume that all the points in $L$ are complete, otherwise we get {\bf B($k+1$)}. All the global irreducible curves $\Theta\subset E^{k+1}_{k+1}$ with $\Theta \ne L$ are either real saddles or curves in ${\mathcal N}_{k+1}$ in view of Propositions \ref{pro:tracetransitions} and \ref{pro:tripetransition}. On the other hand, we necessarily have an incomplete point $q'\in E^{k+1}_{k+1}$.  The non real saddle passing through $q'$ given  by Proposition \ref{pro:notallrealsaddles} is then contained in ${\mathcal N}_{k+1}$, as well as any other non real saddle curve. Thus, we recover {\bf C($k+1$)} at $q'$.

(c-2). Let us assume that $L\not\subset {\mathcal N}_{k+1}$. Let $\Gamma'\subset{\mathcal N}_{k+1}$ be the strict transform of $\Gamma$ and take a complete point $p\in \Gamma'\cap L$. By the transition rules in Proposition \ref{pro:tracetransitions} we obtain that $L$ is a real saddle. If there is an incomplete point $q'\in L$ we are done, since it satisfies {\bf A($k+1$)}. We suppose that all the points in $L$ are complete and we take an incomplete point $q'\in E^{k+1}_{k+1}\setminus L$. Let us see that all the global irreducible curves $\Theta\subset E^{k+1}_{k+1}\cap \mbox{\rm Sing}{\mathcal F}_{k+1}$ are either real saddles or contained in ${\mathcal N}_{k+1}$. In this way we obtain {\bf C($k+1$)} at $q'$ and we are done.
  We look at the transitions through $L$ at $\Theta\cap L$ described in Corollary \ref{cor:transitioncompleta}. Recalling that the curves in $E^{k+1}_{i}\cap \mbox{\rm Sing}{\mathcal F}_{k+1}$ arriving at $L$ are either real saddle curves  or  in ${\mathcal N}_{k+1}$, we see that $\Theta$ is also in ${\mathcal N}_{k+1}$ or a real saddle curve. 
\end{proof}
As a consequence of Proposition \ref{prop:persistency} we arrive to {\bf A,B} or {\bf C} in the final step, which is not possible since all the points in the final step are complete points. This is the desired contradiction. Thus, the only possibility is that there are no complete uninterrupted nodal components. This ends the proof of Theorem \ref{teo:nodalcomponents}.

\end{document}